\setlist[itemize]{leftmargin=*}
\DeclareMathOperator*{\argmin}{arg\,min}
\lstdefinestyle{cust}{
language=python,
commentstyle=\ttfamily,
basicstyle=,
escapeinside={\%*}{*)},
frame=single,
keepspaces=true,
keywordstyle=\bfseries,
morekeywords={*,Input,Output},
}
\renewcommand{\BlackBox}{\rule{1.5ex}{1.5ex}}  % end of proof
    \renewenvironment{proof}{\par\noindent{\bf Proof\ }}{\hfill\BlackBox\\[2mm]}
    \newenvironment{proof}{\par\noindent{\bf Proof\ }}{\hfill\BlackBox\\[2mm]}
\newtheorem{lem}[theorem]{Lemma}
\newtheorem{defi}[theorem]{Definition}
\newtheorem{prop}[theorem]{Proposition}
\newcommand{\cb}{\mathbf{c}}
\newcommand{\R}{\mathbb{R}}
\newcommand{\E}{\mathbb{E}}
\renewcommand{\1}{\mathds 1}
\newcommand{\N}{\mathbb{N}}
\newcommand{\B}{\mathcal{B}}
\renewcommand{\P}{\mathbb{P}}
\newcommand{\Z}{\mathbb{Z}}
\newcommand{\Cor}{\operatorname{Cor}}
\newcommand{\Rips}{\mathrm{VR}}
\newcommand{\indep}{{\perp \!\!\! \perp}}
\renewcommand{\d}{\mathrm{d}}
\newcommand{\supp}{\mathrm{Supp}}
\newcommand{\clement}[1]{{\color{red}{Clément: #1}}}
\newcommand{\floor}[1]{\left\lfloor #1 \right\rfloor}
\begin{document}

%\begin{frontmatter}
\title{Topological Analysis for Detecting Anomalies (TADA)\\ in Time Series.}
\author{Fr\'ed\'eric Chazal, Cl\'ement Levrard, and Martin Royer}
\date{}
\maketitle
\ShortHeadings{TADA}{Topological Analysis for Detecting Anomalies in Time Series}
\firstpageno{1}
%\begin{aug}
%\author[A]{ \fnms{Fr\'ed\'eric} \snm{Chazal} \ead[label=e1]{frederic.chazal@inria.fr}},
%\author[B]{\fnms{Cl\'ement} \snm{Levrard}\ead[label=e2]{clement.levrard@lpsm.paris}}
%and 
%\author[A]{\fnms{Martin} \snm{Royer}\ead[label=e3]{martin.royer@inria.fr}}
%
%\address[A]{DataShape, Inria Saclay,
%\printead{e1,e3}
%}
%\address[B]{LPSM, Universit\'e de Paris, 
%\printead{e2} 
%}
%\end{aug}
%\end{frontmatter}

\begin{abstract}%   <- trailing '%' for backward compatibility of .sty file
%\blindtext

This paper introduces new methodology based on the field of Topological Data Analysis for detecting anomalies in multivariate time series, that aims to detect global changes in the dependency structure between channels. The proposed approach is lean enough to handle large scale datasets, and extensive numerical experiments back the intuition that it is more suitable for detecting global changes of correlation structures than existing methods. Some theoretical guarantees for quantization algorithms based on dependent time sequences are also provided.

\end{abstract}

\begin{keywords}
%sous-marin, temps, mal des profondeurs (ivresse des profondeurs, non?)
Topological Data Analysis, Unsupervised Learning, Anomaly Detection, Multivariate Time Series, $\beta$-mixing coefficients.
\end{keywords}

\section{Introduction}

Monitoring the evolution of the global structure of time-dependent complex data, such as, e.g., multivariate times series or dynamic graphs, is a major task in real-world applications of machine learning. 
The present work considers the case where the global structure of interest is a weighted dynamic graph encoding the dependency structure between the different channels of a multivariate time series. Such a situation may be encountered in various fields, such as e.g. EEG signal analysis \cite{Mohammed23} or monitoring of industrial processes \cite{li2022anomaly}, and has recently given rise to an abundant literature - see, e.g. \cite{zheng2023correlation,ho2023graph} and references therein.

%\clement{c'est la peine de parler de serie temp de graphes en général? Genre structure NN et/ou graphes Si oui un ou deux exemples}.
%This paper focuses on the case where the global structure to be monitored is a dependency structure between sensors, encoded as a graph, casting the problem into the framework of multiple time series, and the considered specific task is anomaly detection. 
%Such a situation may be encountered in various fields, including monitoring ECG signals to detect arythmia (\cite{Dindin19}) or EEG signals to detect interychtal spikes (\cite{Mohammed23}), monitoring of industrial processes \cite{li2022anomaly} 
%in medicine to monitoring \clement{exemple des ponts Fred? de la marine Martin?}, to name a few.  

The specific monitoring task addressed in this paper is unsupervised anomaly detection, that is to detect when the dependency structure is far enough from a so-called "normal" regime to be considered as problematic. From the mathematical point of view, this problem, in its whole generality, is ill-posed: one has access to unlabeled data, in which it is tacitly assumed that the normal regime is prominent, the goal is then to label data points as normal or abnormal in a fully unsupervised way. In this sense, anomaly detection shows clear connection with outlier detection in robust machine learning (for instance robust clustering as in \cite{Brecheteau21,Jana24}). For more insights and benchmarks on the specific problem of anomaly detection in times series the reader is referred to \cite{paparrizos22} (univariate case) and to \cite{WenigEtAl2022TimeEval} for the multivariate case. 

We introduce a new framework, coming with mathematical guarantees, based upon the use of Topological Data Analysis (TDA), a field that has know an increasing interest to  study complex data - see, e.g. \cite{chazal2021introduction} for a general introduction. Application of TDA to anomaly detection in time series have raised a recent and growing interest: in medicine (\cite{Dindin19,Petri14,Chretien24}), cyber security (\cite{Bruillard16})... And, some general surveys on TDA applications to time series may be found in \cite{Ravishanker19,Umeda19}.

In this paper, the adopted approach proceeds in three steps. First, the time-dependency structure of a time series is encoded as a dynamic graph in which each vertex represents a channel of the time series and each weighted edge encodes the dependency between the two corresponding vertices over a time window. Second, persistent homology, a central theory in TDA, is used to robustly extract the global topological structure of the dynamic graph as a sequence of so-called persistence diagrams. 
Third, we introduce a specific encoding of persistence diagrams, that has been proven efficient and simple enough to face large-scale problems in the independent case (\cite{Chazal21}), to produce a topological anomaly score.
%The method we propose is based on a specific encoding of persistence diagrams, that has been proved efficient and simple enough to face large-scale problems in the independent case (\cite{Chazal21}). Namely, we combine several pre-existent machine learning pipelines into a new framework that allows to produce topological anomaly scores from the analysis of time series dependency structures.
As detailed throughout the paper, the scope of the proposed method may be extended in several ways, encompassing dependent sequences of measures and dependent sequences of general metric spaces.

\subsection{Contributions}

Our main contributions are the following.
\begin{itemize}
\item[-] We produce a new machine learning methodology for learning the normal topological behavior in the data spatial dependency structure. This methodology is fully unsupervised, it does not need to be calibrated on uncorrupted data, as long as the amount of corrupted data remains limited with respect to the uncorrupted one. The captured information is numerically proved different and, in several cases, more informative than the one captured by other state-of-the-art approaches. This methodology is lean by design, and enjoys novel interpretable properties with regards to anomaly detection that have never appeared in the literature, up to our knowledge;
\item[-] The proposed pipeline is easy to implement, flexible and can be adapted to different specific applications and framework involving graph data or more general topological data; 
\item[-] The resulting method can be deployed on architectures with limited computational and memory resources: once the training phase realized, the anomaly detection procedure relies on a few memorized parameters and simple persistent homology computations. Moreover, this procedure does not require any storage of previously processed data, preventing privacy issues; 
\item[-] Some convergence guarantees for quantization algorithms - used to vectorize topological information - in a dependent case are proven. These results do not restrict to the specific setting of the paper and may be generalized in the general framework of $M$-estimation with dependent observations;
\item[-] Extensive numerical investigation has been carried out in three different frameworks. First on new synthetic data directly inspired from brain modelling problems as exposed in \cite{Bourakna22}, that are particularly suited for TDA-based methods and may be used as novel benchmark. They are added to the public \cite{gudhi} library at \url{github.com/GUDHI/gudhi-data}. Second on the comprehensive benchmark "TimeEval", that encompasses a large array of synthetic datasets \cite{SchmidlEtAl2022Anomaly}. And third on a real-case "Exathlon" dataset from \cite{exathlon}.
%And last on a new real-case dataset from industrial NavalGroup, for a machine malfunction detection problem. 
All of these experiments assess the relevance of our approach compared with current state-of-the-art methods. Our procedure, originating from concrete industrial problems, is implemented and has been deployed within the Confiance.ai program and an open-source release is incoming. Its implementation involves only standard, tested machine learning tools.
%\item[-] framework flexible ...
\end{itemize}

\subsection{Organization of the paper}

A complete description of the proposed methodology in provided in Section \ref{sec:methodo}, giving details on the several steps to build an anomaly score from a multivariate time series. Next, Section \ref{sec:theoretical_results} theoretically grounds the centroid computation step as well as the anomaly test proposed in the previous section. Section \ref{sec:applications} gathers the numerical experiments in the three different settings introduced above (synthetic TDA-friendly, TimeEval synthetic, real Exathlon data). Proofs of our results are postponed to Section \ref{sec:proofs}.

\section{Methodology}\label{sec:methodo}

This section describes the pipeline to build an anomaly score from raw multivariate time series data $(Y_t)_{t \in [0,L]} \in \R^D \times [0,L]$. We start with a brief description of the TDA tools that we use.

\subsection{Vietoris-Rips persistent homology for weighted graphs}\label{sec:methodo_persistence}
In this section we briefly explain how discrete measures are associated to weighted graphs, encoding their multiscale topological structure through persistent homology theory. We refer the reader to \cite{Edelsbrunner2010,Chazal2016,boissonnat2018geometric} for a general and thorough introduction to persistent homology.   

Recall that given a set V , an (abstract) simplicial complex is a set K of finite subsets of V such that $\sigma \in K$ and $\tau \subset \sigma$ implies $\tau \in K$. Each set $\sigma \in K$ is called a simplex of $K$. The dimension of a simplex $\sigma$ is defined as $|\sigma| - 1$ and the dimension of $K$ is the maximum dimension of any of its simplices. Note
that a simplicial complex of dimension $1$ is a graph. A simplicial complex classically inherits a canonical structure of topological space obtained by representing each simplex by a geometric standard simplex (convex hull of a finite set of affinely independent points in an Euclidean space) and ``gluing'' the simplices along common faces.
A filtered simplicial complex $(K_\alpha)_{\alpha \in I}$, or filtration for short, is a nested family of complexes indexed by a set of real numbers $I \subset \mathbb{R}$: for any $\alpha, \beta \in I$, if $\alpha \leqslant \beta$ then $K_\alpha \subseteq K_\beta$.  The parameter $\alpha$ is often seen as a scale parameter.

Let $G$ be a complete non-oriented weighted graph with vertex set $V$ and real valued edge weight function $s : V \times V \to \mathbb{R}$, $(v,v') \mapsto s_{v,v'}$, satisfying $s_{v,v'} := s_{v',v}$ for any pair of vertices $(v,v')$.

\begin{defi}
Let $\alpha_{\min} \leqslant \min_{v,v' \in V} s_{v,v'}$ and $\alpha_{\max} \geqslant \max_{v,v' \in V} s_{v,v'}$ be two real numbers. 
The Vietoris-Rips filtration associated to $G$ is the filtration $(\Rips_\alpha(G))_{\alpha \in [\alpha_{\min},\alpha_{\max}]}$ with vertex set $V$ defined by
$$\sigma = [v_{0}, \cdots, v_{k}] \in \Rips_\alpha(G) \ \ \mbox{\rm if and only if} \ \ s_{v_i,v_j} \leqslant \alpha, \ \ \mbox{\rm for all} \ \ i,j \in [\![0,k]\!],$$
for $k>1$, and $[v] \in \Rips_\alpha(G)$ for any $v \in V$ and any $\alpha \in  [\alpha_{\min},\alpha_{\max}]$.
\end{defi}

The topology of $\Rips_\alpha(G)$ changes as $\alpha$ increases: existing connected components may merge, loops and cavities may appear and be filled, etc...  Persistent homology provides a mathematical framework and efficient algorithms to encode this evolution of the topology (homology) by recording the scale parameters at which topological features appear and disappear. Each such feature is then represented as an interval $[\alpha_b,\alpha_d]$ representing its life span along the filtration. Its length $\alpha_d - \alpha_b$ is called the persistence of the feature.  
The set of all such intervals corresponding to topological features of a given dimension $d$ - $d=0$ for connected components, $d=1$ for $1$-dimensional loops, $d=2$ for $2$-dimensional cavities, etc... -  is called the persistence barcode of order $d$ of $G$. It is also classically represented as a discrete multiset $D_d(G) \subset [\alpha_{\min},\alpha_{\max}]^2$ where each interval $[\alpha_b,\alpha_d]$ is represented by the point with coordinates $(\alpha_b,\alpha_d)$ - a basic example is given on Figure \ref{fig:RipsDiags}. Adopting the perspective of \cite{ChazalDivol18,Royer19, Chazal21}, in the sequel of the paper, the persistence diagram $D_d(G)$ will be considered as a discrete measure: $D_d(G) := \sum_{p \in D_d(G)} \delta_p$ where $\delta_p$ is the Dirac measure centered 
at $p$. In many practical settings, to control the influence of the, possibly many, low persistence features, the atoms in the previous sum can be weighted: 
$$D_d(G) := \sum_{(b,d) \in D_d(G)} \omega(b,d) \delta_{(b,d)}, $$
where $\omega: \mathbb{R}^2 \to \mathbb{R}_+$ may either be a continuous function which is equal to $0$ along the diagonal or just a constant renormalization factor equal to the total mass of the diagram.
Notice that, in practice, there exist various libraries to efficiently compute persistence diagrams, such as, e.g., \cite{gudhi}. 

\begin{figure}[h!]
\centering
\includegraphics[width=.8\linewidth]{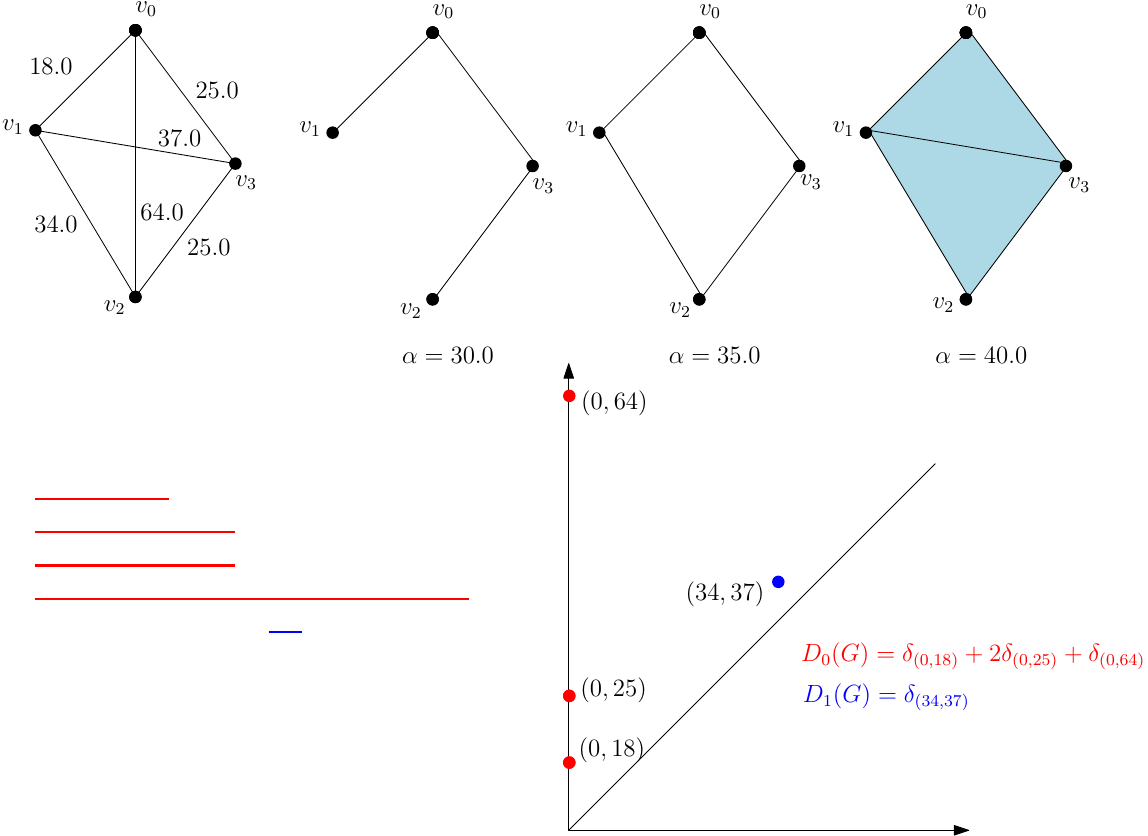}
\caption{The persistence diagrams of order $0$ and $1$ of a simple weighted graph $G$ whose vertices are $4$ points in the real plane and edge weigths are given by the squared distances between them. Here $\alpha_{\min}$ and $\alpha_{\max}$ are chosen to be $0$ and $64$ respectively. The first line represents $G$ and $\Rips_\alpha(G)$ for different values of $\alpha$. The persistence barcodes and diagrams of order $0$ and $1$ are represented in red and blue respectively on the second line.}
\label{fig:RipsDiags}
\end{figure}

%\textcolor{red}{Fred: à voir si on veut mettre ce qui suit ou quelque chose de ce genre ou pas - on ne l'utilise pas vraiment dans la suite, il me semble}
The relevance of the above construction relies on the persistence stability theorem \cite{Chazal2016}. It ensures that close weighted graphs have close persistence diagrams. More precisely, if $G, G'$ are two weighted graphs with same vertex set $V$and edge weight functions $s: V \times V \to \mathbb{R}$ and $s': V \times V \to \mathbb{R}$ respectively, then for any order $d$, the so-called bottleneck distance between the persistence diagrams $D_d(G)$ and $D_d(G')$ is upperbounded by $\| s - s' \|_\infty := \sup_{v,v' \in V} |s_{v,v'} - s'_{v,v'}|$ - see \cite{chazal2014persistence} for formal persistence stability statements for Vietoris-Rips complexes.

\subsection{From similarity matrices to persistence diagrams}\label{sec:persistence_diagrams_computation}

Our first step is to extract the topological information pertaining to the dependency structure between channels. To do so, for a window size $\Delta$, the $D$-dimensional time serie is sliced into $n$ sub-intervals. For each sub-interval $[st, st+\Delta]$, we build a coherence graph $G_t$, starting from the fully-connected graph $([\![1,D]\!],E)$ and specifying edge values as $s_{i,j,t} = 1 - \Cor_t(Y_i,Y_j)$, that is $1$ minus the correlation between channels $i$ and $j$ computed in the interval $t$.

Then, the persistence diagrams of the Vietoris-Rips filtration are computed (one per homology order), resulting in sequences of diagrams $X_t^{(d)}$, with $ 1 \leqslant t \leqslant n$ and $d$ is the homological order. An example of sequences of windows and corresponding persistence diagrams is represented Figure \ref{fig:sw-pdiags}.
In what follows, a fixed homology order is considered, so that the index $d$ is removed. In practice, the vectorization steps that follows are performed order-wise, as well as the anomaly detection procedure.

\begin{algorithm}[H]
	\SetAlgoLined
	\KwIn{$p$ maximal homology order, $\Delta$ window size, $s$ stride.}
	\KwData{A multivariate time serie $(Y_t)_{t \in [0,L]} \in \mathbb{R}^D\times [0,L]$}
	\For{$t$ in $[\![0, \floor{(L-\Delta)/s} ]\!]$}{
		compute similarity matrix on the slice $[st, st+\Delta]$, $S_t = 1-Corr(Y_{[st, st+\Delta]})$\;
		compute the Vietoris-Rips filtration for $([\![1,D]\!],E, S_t)$ \;
		\For{homology dimension $d$ in $[\![0, p-1]\!]$}{
		compute order $d$ persistence diagram $X_t^{(d)}$ of the Rips filtration\;
		}
		}
		\KwOut{$p$  (discrete) time series of persistence diagrams $X_t^{(d)}$, $t \in [\![ 0, \floor{(L-\Delta)/s} ]\!]$, $d\in[\![0, p-1]\!]$.}
	\caption{Persistence diagrams computation from a multidimensional time serie}
	\label{algo:timeserie_to_pers_diagrams}
\end{algorithm}

It is worth noting that other dependency measures such as the ones based on coherence \cite{OmbaO21} may be chosen instead of correlation to build the weighted graphs. Such alternative choices do not affect the overall methodology as well as the theoretical results provided below. 

In the numerical experiments, we give results for the correlation weights, that have the advantages of simplicity and carry a few insights: following \cite{Bourakna22}, such weights are enough to detect structural differences in the case where the channels $Y_j$ are mixture of independent components $Z_p$'s, the weights of the mixture being given by a (hidden) graph on the $p$'s whose structure drives the behavior of the observed persistence diagrams.

At last, it is worth recalling here that Vietoris-Rips filtration may be built on top of arbitrary metric spaces, so that the persistence diagram construction may be performed in more general cases, encompassing valued graphs (with value on nodes or edges) for instance. The vectorization and detection steps below being based on the inputs of such persistence diagrams, the scope of our approach is easily extended beyond the analysis of multivariate time series.

\begin{figure}[h!]
	\centering
	\begin{subfigure}[m]{.45\linewidth}
		\centering
		\includegraphics[width=\linewidth]{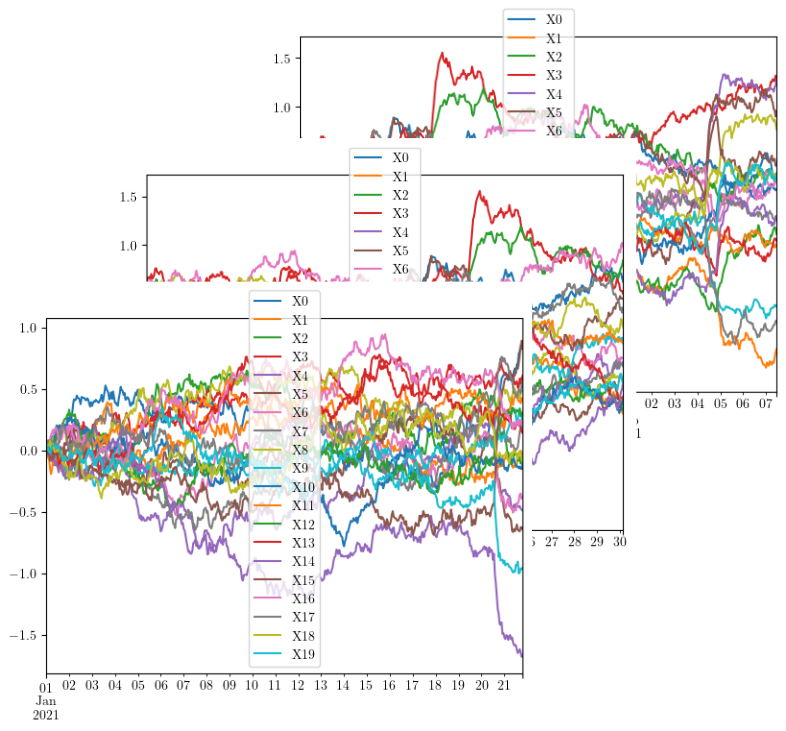}
	\end{subfigure}
	\begin{subfigure}[m]{.45\linewidth}
		\includegraphics[width=\linewidth]{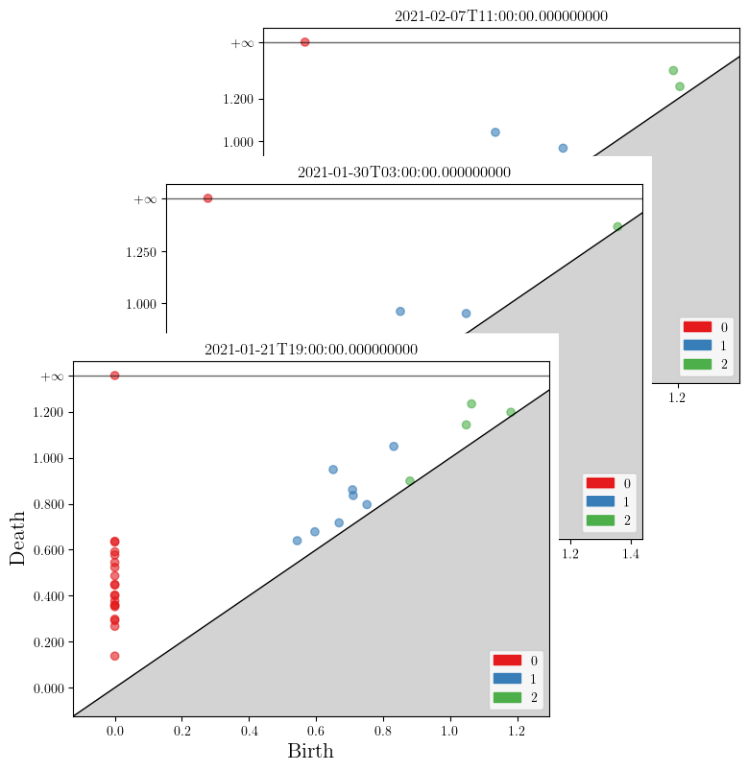}
	\end{subfigure}
	\caption{Left: three sliding windows on an illustrative Ornstein–Uhlenbeck (AR1) synthetic process with additive, punctual anomalies. Right: corresponding to those sliding windows, the three topological descriptors (persistence diagrams with Homology dimension 0 (red), 1 (blue) and 2 (green) features), according to Algorithm \ref{algo:timeserie_to_pers_diagrams}.}
	\label{fig:sw-pdiags}
\end{figure}

\subsection{Centroids computation}\label{sec:centroids}

Once the mutivariate time series are converted into a sequence of persistence diagrams $(X_i)_{i=1, \hdots, n}$, the next step is to convert these persistence diagrams into vectors, that roughly encode how much mass do these diagrams spread into well-chosen areas of $\R^2$. To do so, we begin by automatically choosing these areas, or equivalently centers of these sub-areas, using the ATOL procedure of measure quantisation from \cite{Royer19}.

\begin{algorithm}[H]
	\SetAlgoLined
	\KwIn{$K$: number of centroids. $T$: stopping time.}
	\KwData{$X_1, \hdots, X_n$ discrete measures.}
	\Init{$\cb^{(0)} = (c_1^{(0)}, \hdots, c_K^{(0)})$} randomly chosen from $\bar{X}_n$.
	
	\For{$t=1, \hdots, T$}{
		\For{$j=1, \hdots, K$}{
			$W_{j,t-1} \leftarrow \{x \in \R^2 \mid \forall i \neq j \quad \| x-c_j^{(t-1)}\| \leqslant \|x-c_i^{(t-1)}\|\}$ (ties arbitrarily broken).
			
			\If{$ \bar{X}_n(W_{j,t-1}) \neq 0$}{
				$c_j^{(t)} \leftarrow (\bar{X}_n(du) \left ( u \1_{W_{j,t-1}}(u) \right ))/\bar{X}_n(W_{j,t-1})$.
			}
			\Else{$c_j^{(t)} \leftarrow \mbox{random sample from $\bar{X}_n$}$.}
		}
	}
	\KwOut{Centroids $\cb^{(T)} = (c_1^{(T)}, \hdots, c_K^{(T)})$.}
	\caption{Centroid computations - ATOL - Batch algorithm}
	\label{algo:kmeans_batch}
\end{algorithm}

The batch algorithm for computing centers is recalled below. As introduced in Section \ref{sec:methodo_persistence}, a persistence diagram $X_i$ is thought of as a discrete measure on $\R^2$, that is 
\begin{align*}
X_i = \sum_{(b,d) \in PD_i} \omega_{(b,d)}\delta_{(b,d)},
\end{align*}
where $PD_i$ is the $i$-th persistence diagram considered as a multiset of points, and $\omega_{(b,d)}$ are weights given to points in the persistence diagram (usually given as a function of the distance from the diagonal, see e.g., \cite{Adams17} for instance).  For the batch algorithm, a special interest is paid to the empirical mean measure:
\begin{align*}
\bar{X}_n = \frac{1}{n} \sum_{i=1}^n X_i.
\end{align*}

Algorithm \ref{algo:kmeans_batch} is the same as in the i.i.d. case \cite[Algorithm 1]{Chazal21}. Moreover, almost the same convergence guarantees as in the i.i.d. case may be proven: for a good-enough initialization, only $2 \log(n)$ iterations are needed to achieve a statistically optimal convergence (see Theorem \ref{thm:cv_batch} below). Therefore, a practical implementation of Algorithm \ref{algo:kmeans_batch} should perform several threads based on different  initializations (possibly in parallel), each of them being stopped after $2 \log(n)$ steps, yielding a complexity in time of $O( n \log(n) \times n_{start})$ (where $n_{start}$) is the number of threads.

As for the i.i.d. case, an online version of Algorithm \ref{algo:kmeans_batch} may be conceived, based on mini-batches. In what follows, for a convex set $C \subset \R^d$, we let $\pi_{C}$ denote the Euclidean projection onto~$C$.

\begin{algorithm}[H]
	\SetAlgoLined
	\KwIn{$K$: number of centroids. $q$: size of mini-batches. $R$: maximal radius.}
	\KwData{$X_1, \hdots, X_n$ discrete measures.}
	\Init{$\cb^{(0)} = (c_1^{(0)}, \hdots, c_K^{(0)})$} randomly chosen from $\bar{X}_n$.
	Split $X_1, \hdots, X_n$ into $ n/q$ mini-batches of size $q$: $B_{1,1}, B_{1,2}, B_{1,3}, B_{1,4}, \hdots, B_{t,1}, B_{t,2}, B_{t,3}, B_{t,4}, \hdots, B_{T,1}, B_{T,2}, B_{T,3}, B_{T,4}$, $T=n/4q$.
	
		\For{$t=1, \hdots, T$}{
			\For{$j=1, \hdots, K$}{
			$W_{j,t-1} \leftarrow \{x \in \R^2 \mid \forall i \neq j \quad \| x-c_j^{(t-1)}\| \leqslant \|x-c_i^{(t-1)}\|\}$ (ties arbitrarily broken).
			
			\If{$ \bar{X}_{B_{t,1}}(W_{j,t-1}) \neq 0$}{
			$c_j^{(t)} \leftarrow \pi_{\B(0,R)} \left ( (\bar{X}_{B_{t,3}}(du) \left ( u \1_{W_{j,t-1}}(u) \right ))/\bar{X}_{B_{t,1}}(W_{j,t-1}) \right )$.
			}
			\Else{$c_j^{(t)} \leftarrow c_j^{(t-1)}$.}
			}
		}
	\KwOut{Centroids $\cb^{(T)} = (c_1^{(T)}, \hdots, c_K^{(T)})$.}
	\caption{Centroid computations - ATOL - Minibatch algorithm}
	\label{algo:kmeans_minibatch}
\end{algorithm}

Contrary to Algorithm \ref{algo:kmeans_batch}, Algorithm \ref{algo:kmeans_minibatch} differs from its i.i.d. counterpart given in \cite{Chazal21}. First, the theoretically optimal size of batches is now driven by the decay of the $\beta$-mixing coefficients of the time serie, as will be made clear by Theorem \ref{thm:cv_minibatch} below. 

Second, half of the sample are wasted (the $B_{t,j}$'s with even $j$). This is due to theoretical constraints to ensure that the mini-batches that are used are spaced enough to guarantee a prescribed amount of independence. Of course, the even $B_{t,j}$'s could be used to compute a parallel set of centroids. However, in the numerical experiments, all the sample is used (without leaving some space between minibatches), without noticeable side effect. 

From a computational viewpoint, Algorithm \ref{algo:kmeans_minibatch} is single-pass, so that, if $n_{start}$ threads are run, the global complexity is in $O(n \times n_{start})$.

See an instance of the centroid computations on Figure \ref{fig:sw-atolcenters}.

\begin{figure}[h!]
	\centering
	\begin{subfigure}[m]{.45\linewidth}
		\centering
		\includegraphics[width=\linewidth]{oua-perss.png}
	\end{subfigure}
	\begin{subfigure}[m]{.45\linewidth}
		\centering
		\includegraphics[width=\linewidth]{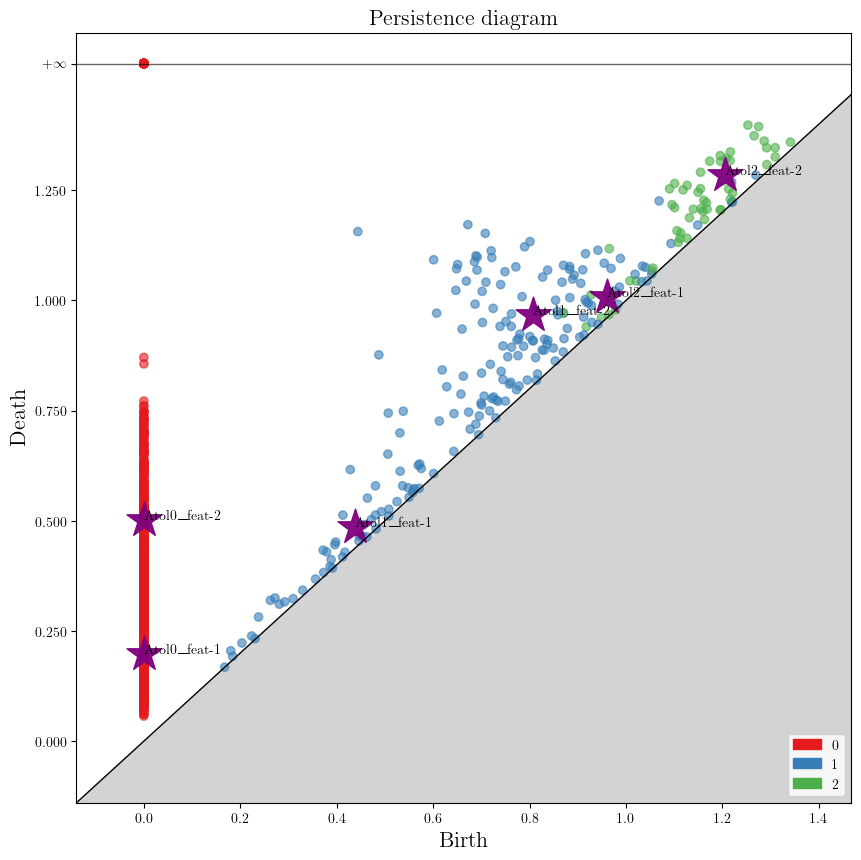}
	\end{subfigure}
	\caption{Left: three representative topological descriptors in the form of persistence diagrams with Homology dimensions 0, 1 and 2. Right: sum of topological descriptors and their centroids (stars in purple, two by dimension) computed from them in dimensions 0, 1 and 2 according to Algorithm \ref{algo:kmeans_batch} or Algorithm \ref{algo:kmeans_minibatch}.}
	\label{fig:sw-atolcenters}
\end{figure}

\subsection{Conversion into vector-valued time series}

Once the centroids $\cb^{(T)}$ built, the next step is to convert the persistence diagrams $(X_i)_{i=1, \hdots, n}$ into vectors. The approach here is the same as in \cite{Royer19} denoting by $\psi_{AT}: u \mapsto \exp(-u^2)$, a persistence diagram $X_i$ is mapped onto
\begin{align}\label{eq:ATOL_vectorization}
v_i = \left ( X_i(du) \psi_{AT}(\|u-c_1^{(T)}\|/\sigma_1), \hdots, X_i(du) \psi_{AT}(\|u-c_K^{(T)}\|/\sigma_K) \right ),
\end{align}
where the bandwiths $\sigma_j$'s are defined by
\begin{align}\label{eq:ATOL_bandwith}
\sigma_j = \min_{\ell \neq j} \|c^{(T)}_\ell - c^{(T)}_j\|/2, 
\end{align}
that roughly seizes the width of the area corresponding to the centroid $c^{(T)}_j$. Other choices of kernel $\psi$ are possible (see e.g. \cite{Chazal21}), as well as other methods for choosing the bandwidth. The proposed approach has the benefit of not requiring a careful parameter tuning step, and seems to perform well in practice.

We encapsulate this vectorization method as follows, and an example vectorization is shown in Figure \ref{fig:sw-atolvect}.

\begin{algorithm}[H]\label{algo:vec}
	\SetAlgoLined
	\KwIn{Centroids $c_1, \hdots, c_K$}
	\KwData{A persistence diagram $X$}
	\For{$j=1, \hdots, K$}{
	Compute $\sigma_j$ as in \eqref{eq:ATOL_bandwith};
	
	$v_j \leftarrow X(du) \psi_{AT}(\|u-c_j\|/\sigma_j)$}
	\KwOut{Vectorization $v=(v_1, \hdots, v_K)$.}
	\caption{Vectorization step}
\end{algorithm}

\begin{figure}[h!]
	\centering	
	\begin{subfigure}[m]{.45\linewidth}
		\centering
		\includegraphics[width=\linewidth]{oua-centers.png}
	\end{subfigure}
	\begin{subfigure}[m]{.45\linewidth}
		\centering	\includegraphics[width=\linewidth]{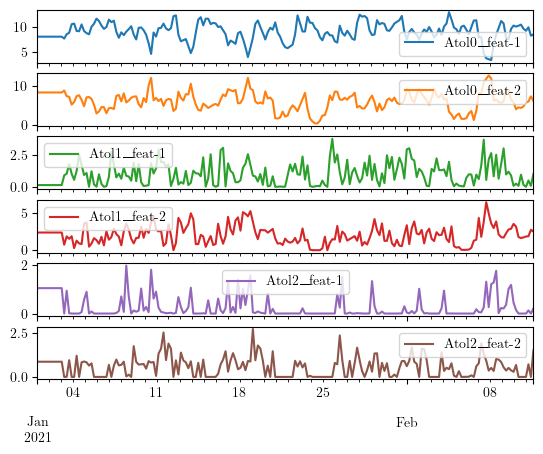}
	\end{subfigure}
	\caption{Left: sum of topological descriptors and their centroids (stars in purple, two by dimension) computed from them in dimensions 0, 1 and 2 by Algorithm \ref{algo:kmeans_batch} or \ref{algo:kmeans_minibatch}. Right: the derived topological vectorization of the entire time series computed relative to each center according to Algorithm \ref{algo:vec}.}
	\label{fig:sw-atolvect}
\end{figure}

\subsection{Anomaly detection procedure}\label{sec:anomaly_detection}

We assume now that we observe the vector-valued time serie $v_1, \hdots, v_n$ of vectorized
persistence diagrams, and intend to build a procedure to determine whether a new diagram (processed with Algorithm \ref{algo:vec}) may be thought of as an anomaly.

Our first step is to build a score, based on the "normal" behaviour of the vectorizations $v_i$'s that are thought of as originating from a base regime. Namely, we build the sample means and covariances
\begin{align}\label{eq:mean_cov_vectorizations}
\hat{\mu} & = \frac{1}{n} \sum_{i=1}^n v_i, \notag \\
\hat{\Sigma} & = \frac{1}{n} \sum_{i=1}^n (v_i - \hat{\mu})(v_i - \hat{\mu})^T.
\end{align}
In the case where the base sample can be corrupted, robust strategies for mean and covariance estimation such as \cite{Rousseeuw99, Mia18} may be employed. To be more precise, for a contamination parameter $h \in [0; 1]$,  we choose the Minimum Covariance Determinant estimator (MCD), defined by
\begin{align}\label{eq:mean_cov_vectorizations_minCovDet}
\hat{I} & \in \argmin_{ I \subset \{1, \hdots, n\}, |I| = \lceil n(1-h) \rceil } \mathrm{Det} \left (  \frac{1}{|I|} \sum_{i \in I} (v_i - \bar{v}_I)(v_i - \bar{v}_I)^T \right ), \notag \\ 
\hat{\mu} & = \bar{v}_{\hat{I}}, \notag \\ 
\hat{\Sigma} & = c_0 \left (  \frac{1}{|\hat{I}|} \sum_{i \in \hat{I}} (v_i - \hat{\mu})(v_i - \hat{\mu})^T \right ),
\end{align}
where $\bar{v}_I$ denotes empirical mean on the subset $I$, and $c_0$ is a normalization constant that can be found in \cite{Mia18}. In all the experiments exposed in Section \ref{sec:applications}, we use the approximation of \eqref{eq:mean_cov_vectorizations_minCovDet} provided in \cite{Rousseeuw99}.

\begin{figure}[h!]
	\centering
	\begin{subfigure}[m]{.45\linewidth}
		\centering	\includegraphics[width=\linewidth]{oua-topol-embedding.png}
	\end{subfigure}
	\begin{subfigure}[m]{.45\linewidth}
		\centering
		\includegraphics[width=\linewidth]{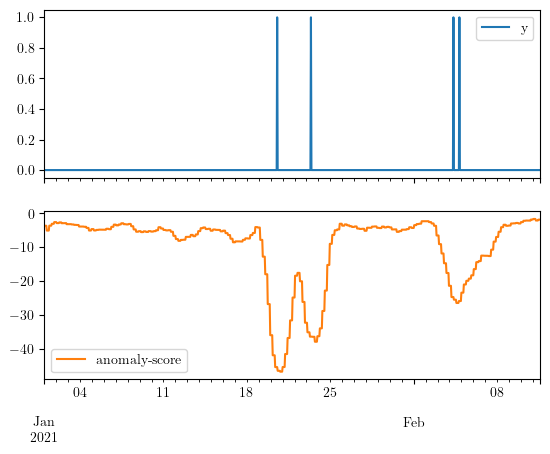}
	\end{subfigure}
	\caption{Left: the derived topological vectorization of the entire time series computed relative to each center according to Algorithm \ref{algo:vec}. Right: (top, in blue) the binary anomalous timestamps $y$ of the original signal, matches (bottom, in orange) the topological anomaly score based on the dimension 0 and 1 features of Algorithm \ref{algo:detection_score}.}
	\label{fig:sw-topolscore}
\end{figure}

Now, for a new vector $v$, a detection score is built via
\begin{align}\label{eq:detection_score}
s^2(v) = (v-\hat{\mu})^T \hat{\Sigma}^{-1}(v - \hat{\mu}),
\end{align}
that expresses the normalized distance to the mean behaviour of the base regime. We refer to an illustrative example in Figure \ref{fig:sw-topolscore}.

% \martin{j'ai créé la sous-section summary mais déplacé ce paragraphe là, je comprends pas bien ce qu'on veut dire dans la première phrase mais je pense qu'on peut l'enlever maintenant.} Once the score function built using Algorithm \ref{algo:detection_score}, it can be applied to persistence diagrams of times series processed via Algorithm \ref{algo:vec}, with the same centers as for Algorithm \ref{algo:detection_score}.
If we let $\hat{s}$ denote the score function based on data $(Y_t)_{t \in [0, T]}$, then anomaly detection tests of the form
\begin{align*}
	T_\alpha(v) = \1_{\hat{s}(v) \geqslant t_\alpha}
\end{align*}   
may be built. The relevance of this family of test based on $\hat{s}$ is assessed via the ROC\_AUC and RANGE\_PR\_AUC metrics of the Application Section \ref{sec:applications}, see there for more details.
% \martin{je pense qu'on peut enlever maintenant} Numerical results in Section \ref{sec:applications} tend to assess the sole interest of the proposed score function. 
Should a test with specific type I error $\alpha$ be needed, a calibration of $t_\alpha$ as the $1-\alpha$ quantile of scores on the base sample could be performed. Section \ref{sec:theo_test} theoretically proves that this strategy is grounded.

\subsection{Summary}

We can now summarize the whole procedure into the following algorithm, with complementary descriptive scheme in Figure \ref{fig:tada-pipeline}.

\begin{algorithm}[H]\label{algo:detection_score}
	\SetAlgoLined
	\KwIn{A window size $\Delta$, a dimension $K$. Possibly a stopping time $T$ or a mini-batch size $q$, and a ratio $h$ (corrupted observations).}
	\KwData{A multivariate time serie $(Y_t)_{t \in [0,L]} \in \mathbb{R}^D\times [0,L]$ (base regime, possibly corrupted)}
	
	Convert $(Y_t)_{t \in [0,L]}$ into $n$ persistence diagrams $(X_i)_{i=1, \hdots, n}$ via Algorithm~\ref{algo:timeserie_to_pers_diagrams}\;
	
	Build $K$ centroids $(c_1, \hdots, c_K)$ using Algorithm \ref{algo:kmeans_batch} or Algorithm \ref{algo:kmeans_minibatch}\;
	
	Convert $(X_i)_{i=1, \hdots, n}$ into $v_1, \hdots, v_n$ using Algorithm \ref{algo:vec}\;
	
	%Build $\hat{\mu}$ and $\hat{\Sigma}$ as in \eqref{eq:mean_cov_vectorizations} (non-corrupted case) or using MinCovDet with signal ratio $h$ (corrupted case, \cite{Mia18} \martin{il me semble que le bon article c'est Rousseeuw \& Van Driessen (1999) non?} \martin{et je me suis trompé quand on en a discuté, ici on utilise pas une heuristique mais un param fixe default contamination = .1 (proportion d'outlier dans le dataset)})\;
	\KwOut{A score function $s: \R^K \rightarrow \R^+$, defined by \eqref{eq:detection_score}.}\caption{TADA: Detection score from base regime time series}
\end{algorithm}

Note that using Algorithm \ref{algo:kmeans_batch} with $T=2 \lceil \log(n) \rceil$ (see Theorem \ref{thm:cv_batch}) and assuming that base observations are not corrupted results in only two parameters specification for Algorithm \ref{algo:detection_score}: the window size $\Delta$ onto which correlation are computed, and $K$ the size of the vectorizations of persistence diagrams. At first it seems that choosing the right $K$ has the same amount of constraints than choosing the right $K$ for k-means imply, but it is actually made slightly easier by the fact that in this instance the k-means like procedure of Atol operates on the space of diagrams $\mathbb{R}^2$. As for $h$ it is very data-dependent, and optimizing for it is outside of the scope of this paper. In practice we will use a fixed contamination parameter of 0.1, a default fixed topological embedding size $K=10$, a default fixed number of restart for the k-means initialisations $n_{start} = 10$. As for the sliding window algorithm we use by default a window of size $\Delta = 100$ with a stride of $s=10$. The window size $\Delta$ is the parameter that most needs to adapt to the data or learning needs, see more on this in Section \ref{sec:applications} Applications.

\begin{figure}[h!]
	\centering
	\includegraphics[width=.8\linewidth]{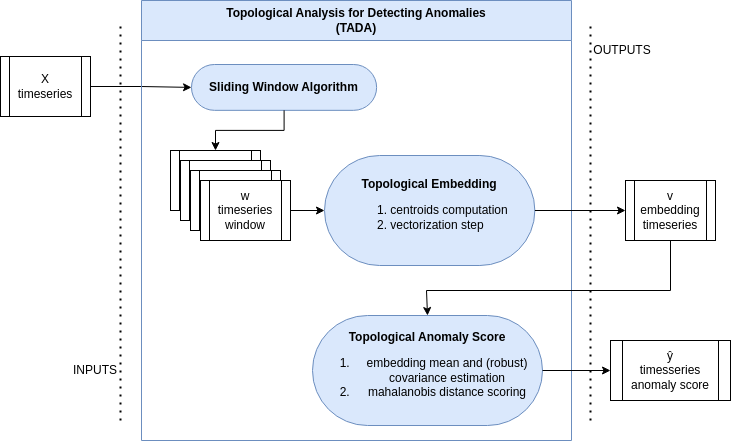}
	\caption{TADA general scheme for producing anomaly scores with topological information from the original timeseries.}
	\label{fig:tada-pipeline}
\end{figure}

%implementation high level
Our proposed anomaly detection procedure Algorithm \ref{algo:detection_score} has a \textit{lean} design for the following reasons.
First it has very few parameters coming with default values.
Second, very little tuning is needed. Note that in the entire application sections to come, the only parameter to change will be the $\Delta$ window resolution parameter, a parameter shared with other methods.
Third, upon learning some data, TADA does not require a lot of memory: only the results of Algorithms \ref{algo:vec} (centroids) and \ref{algo:detection_score} (training vectorization mean and variance) are needed in order to produce topological anomaly scores. This implies that our methodology is easy to deploy, and requires no memory of training data which is often welcome in contexts of privacy for instance. It also means that the methodology will compare very favorably to methods that are memory-heavy such as tree-based methods, neural networks, etc.

\section{Theoretical results}\label{sec:theoretical_results}
In this section we intend to assess the relevance of our methodology from a theoretical point of view. Sections \ref{sec:cv_centroids} and \ref{sec:theo_test} gives results in the general case where the sample is a stationary sequence of random measures.  Section \ref{sec:theo_measures_PD} provides some details on how persistence diagrams built from mulitvariate time series as exposed in Section \ref{sec:methodo_persistence} can be casted into this general framework.
\subsection{Convergence of Algorithms \ref{algo:kmeans_batch} and \ref{algo:kmeans_minibatch}}\label{sec:cv_centroids}

In what follows we assume that $X_1, \hdots, X_n$ is a stationary sequence of random measures over $\R^d$, with common distribution $X$. Some assumptions on $X$ are needed to ensure convergence of Algorithms \ref{algo:kmeans_batch} and \ref{algo:kmeans_minibatch}.

First, let us introduce here  $\mathcal{M}_{N_{\max}}(R,M)$ as the set of random measures that are bounded in space, mass and support size.

\begin{defi}\label{def:bounded_measures}
For $R, M >0$ and $N_{\max} \in \N^*$, we let $\mathcal{M}_{N_{\max}}(R,M)$ denote the set of discrete measures on $\R^d$ that satisfies
\begin{enumerate}
\item $\supp(\mu) \subset \B(0,R)$,
\item $\mu(\R^2) \leqslant M$,
\item $|\supp(\mu)| \leqslant N_{\max}$.
\end{enumerate}
Accordingly, we let $\mathcal{M}(R,M)$ denote the set of measures such that $1$ and $2$ hold.
\end{defi}
With a slight abuse, if $X$ denote a distribution of random measures, we will write $X \in \mathcal{M}_{N_{\max}}(R,M)$ whenever $X \in \mathcal{M}_{N_{\max}}(R,M)$ almost surely. As detailed in Section \ref{sec:theo_measures_PD}, persistence diagrams built from correlation matrices satisfy the requirements of Definition \ref{def:bounded_measures}.  

In the i.i.d. case,  \cite{Chazal21} proves that the output of Algorithm \ref{algo:kmeans_batch} with $T= 2 \log(n)$ iterations returns a statistically optimal approximation of
\begin{align}\label{eq:defi_centres_opt}
\cb^* \in \mathcal{C}_{opt} = \argmin_{\cb \in (\R^2)^K} \E(X)(du) \min_{j=1, \hdots, K}\|u-c_j\|^2:= F(\cb),
\end{align}
where $\E(X)$ is the so-called mean measure $\E(X): A \in \mathcal{B}(\R^2) \mapsto \E(X(A))$. Note here that $X \in \mathcal{M}(R,M)$ ensures that $\mathcal{C}_{opt}$ is non-empty (see, e.g., \cite[Section 3]{Chazal21}). For the aforementioned result to hold, a structural condition on $\E(X)$ is also needed.

For a vector of centroids $\cb = (c_1, \hdots, c_k) \in \mathcal{B}(0,R)^k$, we let 
\begin{align*}
W_j(\cb)& \begin{multlined}[t] = \{ x \in \mathbb{R}^d \mid  \forall i < j \quad \| x- c_j\| < \| x-c_i\| \quad \mbox{and} \\
\forall{i >j} \quad \| x- c_j\| \leqslant \|x-c_i\| \}, 
\end{multlined}\\
N(\cb) & = \{x \mid \exists i < j \quad x \in W_i(\cb) \quad \mbox{and} \quad \|x-c_i\| = \|x-c_j\| \}, 
\end{align*}
so that $(W_1(\cb), \hdots, W_k(\cb))$ forms a partition of $\R^2$ and $N(\cb)$ represents the skeleton of the Voronoi diagram associated with $\cb$. The margin condition below requires that the mass of $\E(X)$ around $N(\cb^*)$ is controlled, for every possible optimal $\cb^* \in \mathcal{C}_{opt}$. To this aim, let us denote by $\B(A,t)$ the $t$-neighborhood of $A$, that is $\{ y \in \R^d \mid \d(y,A) \leqslant t \}$, for any $A \subset \R^d$ and $t\geqslant0$. The margin condition then writes as follows.

   \begin{defi}\label{def:margincondition}
         $\E(X) \in \mathcal{M}(R,M)$ satisfies a margin condition with radius $r_0 >0$ if and only if, for all $0 \leqslant t \leqslant r_0$,
					\begin{align*}
					 \sup_{\cb^* \in \mathcal{C}_{opt}} \E(X) \left ( \mathcal{B}(N(\cb^*),t) \right )  \leqslant \frac{B p_{min}}{128 R^2}t,
					\end{align*}
					where $\mathcal{B}(N(\cb^*),t)$ denotes the $t$-neighborhood of $N(\cb^*)$ and
					\begin{enumerate}
					\item $B=\inf_{\cb^* \in \mathcal{C}_{opt}, j \neq i}{\|c_i^* - c_j^*\|}$,
					\item $p_{min} = \inf_{\cb^* \in \mathcal{C}_{opt}, j=1, \hdots, k}{\E(X) \left ( W_j(\cb^*) \right )}$.
					\end{enumerate}
         \end{defi}
According to \cite[Proposition 7]{Chazal21}, $B$ and $p_{\min}$ are positive quantities whenever $\E(X) \in \mathcal{M}(R,M)$. In a nutshell, a margin condition ensures that the mean distribution $\E(X)$ is well-concentrated around $k$ poles. For instance, finitely-supported distributions satisfy a margin condition. 
Up to our knowledge, margin-like conditions are always required to guarantee convergence of Lloyd-type algorithms \cite{Monteleoni16,Levrard18} in the i.i.d. case.

Turning back to our base case of time series of persistence diagrams, we cannot assume anymore independence between observations. To adapt the argument of \cite{Chazal21} in our framework, a quantification of dependence between discrete measures is needed. We choose here to seize dependence between observation via $\beta$-mixing coefficients, whose definition is recalled below.

\begin{defi}\label{def:beta_mixing}
For $t \in \Z$ we denote by $\sigma(- \infty, t)$ (resp. $\sigma(t,+ \infty)$) the $sigma$-fields generated by $ \hdots, X_{t-1}, X_t$ (resp. $X_t, X_{t+1} \hdots $). The \textit{beta-mixing} coefficient of order $q$ is then defined by
\begin{align*}
\beta(q)=\sup_{t \in \Z} \E \left [ \sup_{B \in \sigma(t+q,+ \infty)} | \P(B\mid \sigma(- \infty, t)) - \P(B) | \right ].
\end{align*}
\end{defi} 
Recalling that the sequence of persistence diagrams is assumed to be stationary, its beta-mixing coefficient of order $q$ may be subsequently written as
\begin{align*}
\beta(q) = \E (\d_{TV}(P_{(X_q, X_{q+1}, \hdots)  \mid \sigma(\hdots, X_0)},P_{(X_q, X_{q+1}, \hdots)})),
\end{align*}
where $\d_{TV}$ denotes the total variation distance and $P_Z$ denotes the distribution of $Z$, for a generic random variable $Z$. As detailed in Section \ref{sec:theo_measures_PD}, mixing coefficients of persistence diagrams built from a multivariate time serie may be bounded in terms of mixing coefficients of the base time serie. Whenever these coefficients are controlled, results from the i.i.d. case may be adjusted to the dependent one.

We begin with an adaptation of \cite[Theorem 9]{Chazal21} to the dependent case.
%\textcolor{red}{Fred: dans l'énoncé ci-dessous, $q$ intervient dans la définition de $T$ avant d'être introduit dans la phrase suivante.}\clement{oui mais c'est pas le même en déviation et en espérance. Ptet mieux comme ca, a confirmer!}

\begin{theorem}\label{thm:cv_batch}
Assume that $X_1, \hdots, X_n$ is stationary, with distribution $X \in \mathcal{M}_{N_{max}}(R,M)$, for some $N_{max} \in \mathbb{N}^*$. Assume that $\mathbb{E}(X)$ satisfies a margin condition with radius $r_0$, and denote by $R_0= \frac{Br_0}{16\sqrt{2}R}$, $\kappa_0 = \frac{R_0}{R}$. For $q \in \N^*$, choose $T \geqslant  \lceil \frac{\log(n/q)}{\log(4/3)} \rceil$, and let $\cb^{(T)}$ denote the output of Algorithm \ref{algo:kmeans_batch}. 

If $q$ is such that ${\beta(q)^2}/{q^3} \leqslant  n^{-3}$, and $\cb^{(0)} \in \B(\mathcal{C}_{opt},R_0)$, then, for $n$ large enough, with probability larger than $1 - c \frac{q k M^2}{n\kappa_0^2 p_{\min}^2}-2e^{-x}$, we have
\begin{align*}
\inf_{\cb^* \in \mathcal{C}_{opt}} \| \cb^{(T)} - \cb^*\|^2  \leqslant \frac{ B^2r_0^2}{512 R^2 } \left ( \frac{q}{n} \right ) + C \frac{M^2 R^2 k^2 d \log(k)}{p_{min}^2} \left ( \frac{q}{n} \right )(1+x),
\end{align*}
for all $x>0$, where $C$ is a constant. 

Moreover, if $q$ is such that $\beta(q)/q \leqslant n^{-1}$ and $\cb^{(0)} \in \B(\cb^*,R_0)$, it holds
\begin{align*}
\E \left ( \inf_{\cb^* \in \mathcal{C}_{opt}} \| \cb^{(T)} - \cb^*\|^2 \right ) \leqslant C \frac{d k^2 R^2 M^2  \log(k)}{\kappa_0^2 p_{\min}^2} \left ( \frac{q}{n} \right )
\end{align*}
\end{theorem}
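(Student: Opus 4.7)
The structure mirrors the i.i.d. argument of \cite[Theorem 9]{Chazal21}: (i) show that a single Lloyd step is a contraction of factor $3/4$ on a basin around $\mathcal{C}_{opt}$ whenever the empirical mean measure $\bar{X}_n$ approximates $\E(X)$ uniformly well on a parametric family of weighted Voronoi indicators indexed by $\cb$; (ii) iterate the contraction $T \geq \lceil \log(n/q)/\log(4/3) \rceil$ times so that the initial error $R_0$ is squeezed down below the statistical precision $\sqrt{q/n}$; (iii) establish this uniform empirical-process bound under the $\beta$-mixing assumption. Step (i) is inherited verbatim from the i.i.d. case: the contraction is a purely deterministic statement, relying only on the margin condition of Definition \ref{def:margincondition} together with the geometry of Voronoi partitions, and does not use independence of the $X_i$.

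Step (ii) is then the heart of the dependent adaptation. Denote by $\Phi(\nu,\cb)$ the Lloyd update operator computed against a measure $\nu$. The contraction of (i) implies that if
\begin{align*}
\sup_{\cb \in \B(\mathcal{C}_{opt}, R_0)} \| \Phi(\bar{X}_n,\cb) - \Phi(\E(X),\cb) \| \leqslant \varepsilon,
\end{align*}
then after $T$ iterations the distance to the nearest $\cb^* \in \mathcal{C}_{opt}$ is at most $(3/4)^T R_0 + 4\varepsilon$. The choice of $T$ makes the first term of order $\sqrt{q/n}$, matching the first term in the bound. It thus remains to control $\varepsilon$ under $\beta$-mixing.

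For this, I would apply Berbee's coupling lemma: partition $X_1,\dots,X_n$ into $\lceil n/q \rceil$ consecutive blocks of length $q$, then split into odd- and even-indexed subsequences of roughly $n/(2q)$ blocks each. On each subsequence, Berbee's lemma provides a coupling with an i.i.d. sequence of block-distributed measures, with total-variation failure probability at most $(n/(2q))\beta(q)$. Each coupled block carries total mass at most $qM$ and is supported in $\B(0,R)$, so a Bernstein-type bound applied to the $n/(2q)$ block sums yields, pointwise in $\cb$, a deviation of order $\sqrt{qM^2R^2 \log(1/\delta)/n}$. A chaining/covering argument on the $(kd)$-dimensional domain $\B(\mathcal{C}_{opt},R_0)$, exploiting the Lipschitz dependence of $\Phi(\cdot,\cb)$ in $\cb$ guaranteed by the margin condition, promotes this to a uniform deviation of order $\sqrt{M^2R^2 k^2 d \log(k) (q/n)}$ on a good event of probability at least $1 - c q k M^2/(n \kappa_0^2 p_{\min}^2) - 2e^{-x}$. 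The coupling error $(n/(2q))\beta(q)$ is absorbed into the failure probability precisely under $\beta(q)^2/q^3 \leqslant n^{-3}$; for the expectation bound, integrating the tails with the weaker requirement $\beta(q)/q \leqslant n^{-1}$ is enough to keep the coupling contribution of smaller order than the statistical term.

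\textbf{Main obstacle.} The delicate point is obtaining the uniform empirical-process bound while simultaneously paying the cost of mixing. Combining Berbee's coupling with a sharp covering argument is standard but subtle here because $\Phi(\nu,\cb)$ is not linear in $\nu$ (it involves ratios $\bar{X}_n(u \mathbf{1}_{W_j(\cb)}(u))/\bar{X}_n(W_j(\cb))$), so one must treat the numerator and denominator separately and show that the denominator stays bounded away from $0$ uniformly in $\cb$ in the basin. The margin condition of Definition \ref{def:margincondition}, together with the $p_{\min}$ lower bound, is exactly what controls this denominator and therefore propagates the linear concentration of $\bar{X}_n$ to a concentration of $\Phi(\bar{X}_n,\cb)$; this is where the $1/p_{\min}^2$ factor in the final bound originates.
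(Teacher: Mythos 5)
Your overall architecture is the paper's: Berbee-type coupling into length-$q$ blocks, odd/even splitting into two i.i.d. block subsequences, a uniform (VC/covering) concentration bound for the coupled empirical measure over Voronoi functionals as in \cite[Lemma 22]{Chazal21}, an iterated one-step contraction with the choice of $T$ killing the initialization term, and the $p_{\min}$-lower bound on cell weights to handle the ratios in the Lloyd update (this is indeed where the $1/p_{\min}^2$ comes from). The paper packages the contraction slightly differently — a one-step lemma $\|\hat m(\cb)-\cb^*\|^2\leqslant \tfrac34\|\cb-\cb^*\|^2 + \text{(statistical error)} + \text{(coupling error)}$, rather than your ``population contraction plus uniform perturbation of $\Phi$'' — but these are equivalent, and your remark that the contraction itself is deterministic and independence enters only through the empirical process is accurate. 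One point you leave implicit and should state is the recursion showing that all iterates $\cb^{(t)}$ remain in $\B(\cb^*,R_0)$ on the good event; the paper does this explicitly before iterating.

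The one step that would fail to deliver the theorem as stated is your treatment of the coupling error by a union bound over blocks: requiring \emph{all} $n/q$ blocks to be perfectly coupled costs $\tfrac{n}{2q}\beta(q)$ in probability, which under the hypothesis $\beta(q)^2/q^3\leqslant n^{-3}$ (i.e.\ $\beta(q)\leqslant (q/n)^{3/2}$) is only of order $\sqrt{q/n}$ — strictly larger than the claimed failure probability $c\,qkM^2/(n\kappa_0^2p_{\min}^2)$ of order $q/n$. The paper does not demand that the coupling succeed everywhere: it carries the fraction of uncoupled indices $\tfrac1n\sum_{i}\1_{X_i\neq\tilde X_i}$ through the one-step lemma as an explicit additive perturbation of size $C\tfrac{kR^2M^2}{p_{\min}^2}\bigl(\tfrac1n\sum_i\1_{X_i\neq\tilde X_i}\bigr)^2$, and then controls it by Markov's inequality,
\begin{align*}
\P\Bigl(\tfrac1n\sum_{i=1}^n\1_{X_i\neq\tilde X_i}\geqslant \sqrt{q/n}\Bigr)\leqslant \sqrt{n/q}\,\beta(q)\leqslant q/n ,
\end{align*}
which matches the stated probability and, since $\E\bigl[(\tfrac1n\sum_i\1_{X_i\neq\tilde X_i})^2\bigr]\leqslant\beta(q)$, is also exactly what makes the expectation bound go through under the weaker condition $\beta(q)\leqslant q/n$. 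With your union bound you would prove a correct but quantitatively weaker deviation statement; replace it by the Markov argument on the uncoupled fraction to recover the theorem as written.
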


Intuitively speaking, Theorem \ref{thm:cv_batch} provides the same guarantees as in the i.i.d. case, but for a 'useful' sample size $n/q$ that corresponds to the number of sample measures that are spaced enough (in fact $q$-spaced) so that they are independent enough in view of the targeted convergence rate in $q/n$. This point of view seems ubiquitous in machine learning results based on dependent sample (see, e.g., \cite[Theorem 1]{Agarwal13} or \cite[Lemma 7]{Mohri10}). 

Assessing the optimality of the requirements on $\beta(q)$ seems a difficult question. Following  \cite[Corollary 20]{Mohri10} and comments below, the $\beta(q) \leqslant q/n$ condition we require to get a convergence rate in expectation seems optimal for polynomial decays ($\beta(q) = O(q^{-a})$, $a>0$) in an empirical risk minimization framework. However, this choice leads to a convergence rate in $(q/n)^{(a-1)/(4a)}$ for \cite{Mohri10}, larger than our $(q/n)$ rate. Though  the output of Algorithm \ref{algo:kmeans_batch} is not an empirical risk minimizer, it is likely that it has the same convergence rate as if it were (based on a similar behavior for the plain $k$-means case, see e.g., \cite{Levrard18}). The difference between convergence rates given in \cite{Mohri10} and Theorem \ref{thm:cv_batch} might be due to the fact that \cite{Mohri10} settles in a 'slow rate' framework, where the convexity of the excess risk function is not leveraged, whereas a local convexity result is a key argument in our result (explicited by \cite[Lemma 21]{Chazal21}).

In a fast rate setting (i.e. when the risk function is strictly convex), \cite[Theorem 5]{Agarwal13} also suggests that a milder requirement in  $\beta(q)/q \leqslant n^{-1}$ 
might be enough to get a $O(q/n)$ convergence rate in expectation, for online algorithms under some assumptions that will be discussed below Theorem \ref{thm:cv_minibatch} (convergence rates for an online version of Algorithm \ref{algo:kmeans_batch}). Up to our knowledge there is no lower bound in the case of stationary sequences with controlled $\beta$ coefficients that could back theoretical optimality of such procedures.

At last, the sub-exponential rate we obtain in the deviation bound under the stronger condition $\beta(q)^2/q^3 \leqslant n^{-3}$ seems better than the results proposed in \cite[Corollary 20] {Mohri10} or \cite[Theorem 5]{Agarwal13} in terms of 'large deviations' (in $(q/n)x$ here to get an exponential decay). Determining whether the same kind of result may hold under the condition $\beta(q) \leqslant q/n$ remains an open question, as far as we know.

Nonetheless, Theorem \ref{thm:cv_batch} provides some  convergence rates (in expectation) for several decay scenarii on $\beta(q)$:
\begin{itemize}
\item if $\beta(q) \leqslant C \rho^q$, for $\rho < 1$, then an optimal choice of $q$ is $q = c \log(n)$, providing the same convergence rate as in the i.i.d. case \cite[Theorem 9]{Chazal21}, up to  a $\log(n)$ factor.
\item if $\beta(q) = C q^{-a}$, for $a >0$, then an optimal choice of $q$ is $q= C n^{\frac{1}{a+1}}$, that yields a convergence rate in $n^{-1 + \frac{1}{a+1}}$. 
\end{itemize}
In the last case, letting $a \rightarrow + \infty$ allows to retrieve the i.i.d. case, whereas $a \rightarrow 0$ has for limiting case the framework when only one sample is observed (thus leading to a non-learning situation). 

Whatever the situation, a benefit of Algorithm \ref{algo:kmeans_batch} is that a correct choice of $q$, thus the prior knowledge of $\beta(q)$, is not required to get an at least consistent set of centroids, by choosing $T =  \lceil \frac{\log(n)}{\log(4/3)} \rceil$. This will not be the case for the convergence of Algorithm \ref{algo:kmeans_minibatch}, where the size of minibatches $q$ is driven by a prior knowledge on $\beta$.

\begin{theorem}\label{thm:cv_minibatch}
Let $q$ be large enough so that $\frac{\beta(q/18)}{q^2} \leqslant n^{-2}$ and $q \geqslant c_0 \frac{k^2 M^2}{p_{\min}^2 \kappa_0^2} \log(n)$, for a constant $c_0$ that only depends on $\int_0^1 \beta^{-1}(u)du$. Provided that $\E(X)$ satisfies a margin condition, if the initialization satisfies the same requirements as in Theorem \ref{thm:cv_batch}, then the output of Algorithm \ref{algo:kmeans_minibatch} satisfies
\begin{align*}
\E \left ( \inf_{\cb^* \in \mathcal{C}_{opt}} \| \cb^{(T)} - \cb^*\|^2 \right ) \leqslant 128 \frac{k d M R^2}{p_{\min} (n/q)}.
\end{align*}
\end{theorem}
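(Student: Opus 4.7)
The proof adapts the i.i.d.\ minibatch analysis of \cite{Chazal21} to dependent data, via a Berbee-type coupling argument enabled by the spacing imposed by the wasted minibatches $B_{t,2}, B_{t,4}$. I would proceed in three main steps. First, note that every pair of ``useful'' minibatches in Algorithm \ref{algo:kmeans_minibatch} is separated by a gap of at least $q$ observations, both within a round ($B_{t,1}$ and $B_{t,3}$ are separated by $B_{t,2}$) and across rounds ($B_{t,3}$ and $B_{t+1,1}$ are separated by $B_{t,4}$). A Berbee coupling applied iteratively to the $T = n/(4q)$ rounds yields a coupled sequence $(\tilde{B}_{t,1},\tilde{B}_{t,3})_{t=1,\ldots,T}$ whose across-$t$ marginals are independent and that coincides with the original pairs outside an event of probability at most $cT\beta(q/c')$. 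The assumption $\beta(q/18)/q^2 \leq n^{-2}$, combined with $T=n/(4q)$, bounds this coupling error by $O(q/n)$, which is absorbed into the target rate. The $1/18$ factor reflects the further splitting of each round into independent pieces in the coupling construction.

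Second, on the coupled event, I would establish uniform control over $t \leq T$ and $j \leq k$ of the denominator $\bar{X}_{B_{t,1}}(W_{j,t-1})$: it must remain bounded away from zero so the update is well defined and close to a deterministic Lloyd step. Under the margin condition and provided $\cb^{(t-1)}\in \mathcal{B}(\mathcal{C}_{opt},R_0)$, one has $\E(X)(W_j(\cb^{(t-1)}))\geq p_{\min}/2$, and Hoeffding-type inequalities for bounded random measures in $\mathcal{M}(R,M)$ yield concentration at scale $M/\sqrt{q}$. A union bound over $Tk$ events then requires $q\gtrsim (k^2 M^2/(p_{\min}^2 \kappa_0^2)) \log n$, which is exactly the second assumption on $q$. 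On this high-probability event, the denominator is within a constant factor of its mean, and the numerator $\bar{X}_{B_{t,3}}(u \1_{W_{j,t-1}})$, based on the \emph{independent} minibatch $B_{t,3}$, is an unbiased estimator of $\E(X)(u\1_{W_j(\cb^{(t-1)})})$ with variance of order $M^2R^2/q$.

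Third, one iterates. The conditional expectation of the update given $\cb^{(t-1)}$ is, up to negligible bias, the deterministic Lloyd map $m(\cb^{(t-1)})$, which by \cite[Lemma 21]{Chazal21} is a strict contraction around $\mathcal{C}_{opt}$ under the margin condition. Combined with the variance contribution from the minibatch averaging, this gives a stochastic recurrence of the form
\begin{align*}
\E(\|\cb^{(t)}-\cb^*\|^2 \mid \cb^{(t-1)}) \leq \rho^2 \|\cb^{(t-1)}-\cb^*\|^2 + \frac{C k d M R^2}{q p_{\min}},
\end{align*}
with $\rho<1$ and $\cb^*$ chosen as the nearest element of $\mathcal{C}_{opt}$. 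Unrolling over $T=n/(4q)$ iterations, and using $T\gtrsim \log n$ to kill the initialization term, yields the advertised $O(kdMR^2/(p_{\min}\cdot n/q))$ rate after taking expectation and passing to $\inf_{\cb^*}$.

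The hard part is the joint control of two things: that the iterates stay in $\mathcal{B}(\mathcal{C}_{opt},R_0)$ throughout the $T$ steps so that the local contraction applies at every round (this is what forces the $\log n$ factor in the lower bound on $q$), and that the Berbee coupling is compatible with the sequential nature of the algorithm, so that at each round the fresh minibatch pair $(\tilde{B}_{t,1},\tilde{B}_{t,3})$ is effectively independent from the history $\cb^{(1)},\ldots,\cb^{(t-1)}$. The projection $\pi_{\B(0,R)}$ in the update plays only a minor role: since $\mathcal{C}_{opt}\subset\B(0,R)^k$, it can only decrease distances to optimal centroids and does not interfere with the contraction analysis.
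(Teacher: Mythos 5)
Your overall architecture (a Berbee coupling across the spaced minibatches, uniform control of the cell weights, then a contraction recursion) is the same as the paper's, but two of your steps do not go through as written.

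First, the coupling only renders \emph{distinct} blocks independent of one another; inside a coupled minibatch $\tilde{B}_{t,1}$ the $q$ observations retain the joint law of $q$ consecutive, dependent measures. The ``Hoeffding-type inequalities at scale $M/\sqrt{q}$'' you invoke are therefore not available off the shelf. The paper instead applies the $\beta$-mixing Bernstein inequality of Theorem~\ref{thm:concentration_Doukhan} \emph{within} each minibatch, with the variance proxy bounded via Lemma~\ref{lem:cov_betamelange}. This is precisely where the $\beta(q/18)$ in the hypothesis originates (the $\lceil q/17\rceil-1$ in Doukhan's bound), and where the constant $c_0$ acquires its dependence on $\int_0^1\beta^{-1}(u)\,\d u$. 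Your attribution of the $1/18$ to the cross-block coupling, and your silence on why $c_0$ depends on that integral, leave these two features of the statement unexplained; more importantly, without a dependent-data concentration inequality the event controlling the denominators $\bar{X}_{B_{t,1}}(W_{j,t-1})$ is not established.

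Second, and more seriously, your recurrence $\E\bigl(\|\cb^{(t)}-\cb^*\|^2\mid\cb^{(t-1)}\bigr)\leqslant\rho^2\|\cb^{(t-1)}-\cb^*\|^2+CkdMR^2/(qp_{\min})$ with a \emph{fixed} $\rho<1$ and a \emph{constant} additive term unrolls to a stationary level of order $kdMR^2/\bigl((1-\rho^2)\,q\,p_{\min}\bigr)$, i.e.\ $O(1/q)$, not the claimed $O(q/n)=O(1/T)$. The two coincide only when $q$ is of order $\sqrt{n}$ or larger, which is not assumed (for exponential mixing $q\asymp\log n$). The paper's recursion is of stochastic-approximation type, $\E(a_{t+1})\leqslant\bigl(1-\tfrac{2-K_1}{t+1}\bigr)\E(a_t)+\tfrac{CkdMR^2}{p_{\min}(t+1)^2}$, in which both the gain and the noise decay with $t$; a Chung-type induction then yields $\E(a_t)\leqslant C'kdMR^2/(p_{\min}t)$ and hence the rate $1/T=4q/n$ at $t=T$. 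With the per-step noise held constant, as in your version, the unrolling cannot reach the advertised bound, so the final step of your argument fails.
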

As in \cite{Rio93}, the generalized inverse $\beta^{-1}$ is defined by $\beta^{-1}(u) = \left |  \left \{k \in \N^* \mid \beta(k) > u \right \} \right |$. In particular, for $\beta(q) \sim q^{-a}$, $\int_{0}^1 \beta^{-1}(u) du$ is finite only if $a > 1$ (that precludes the asymptotic $a \rightarrow 0$). 

The requirement $\beta(q)/q^2 = O(n^{-2})$ is stronger than in Theorem \ref{thm:cv_batch}, thus stronger that the $\beta(q)/q = O(n^{-1})$ suggested by \cite[Theorem 5]{Agarwal13} in a similar online setting. Note however that for \cite[Theorem 5]{Agarwal13} to provide a $O(q/n)$ rate under the requirement $\beta(q)/q = O(n^{-1})$, two other terms have to be controlled:
\begin{enumerate}
\item a total step sizes term in $\sum_{t=1}^T \|\cb^{t} - \cb^{(t-1)}\|$ that must be of order $O(1)$. Controlling this term would require an slight adaptation of Algorithm \ref{algo:kmeans_minibatch}, for instance by clipping gradients.
\item a regret term in $\E \left ( \sum_{t=1}^T \bar{X}_{B_T}(du)\left [ \d^2(u,\cb^{(t)}) - \d^2(u,\cb^*) \right ] \right )$ that must be of order $O(q)$. The behavior of this term remains unknown in our setting, so that determining whether the milder condition $\beta(q)/q=O(n^{-1})$ is sufficient remains an open question.
\end{enumerate}

Let us emphasize here that, to optimize the bound in Theorem \ref{thm:cv_minibatch}, that is to choose the smallest possible $q$, a prior knowledge of $\beta(q)$ is required. This can be the case when the original multivariate time serie $Y_t$ follows a recursive equation as in \cite{Bourakna22}. Otherwise, these coefficients may be estimated, using histograms as in \cite{McDonald15} for instance.

As in the batch case, the required lower bound on $q$ corresponds to the "optimal" choice of minibatch spacings so that consecutive even minibatches may be considered as i.i.d.. It is then not a surprise that we recover the same rate as in the i.i.d. case, but with $n/q$ samples (see \cite[Theorem 10]{Chazal21}). As for the batch situation, several decay scenarii may be considered:
\begin{itemize}
\item for $\beta(q) \leqslant C \rho^q$, $\rho <1$, choosing $q = c_0 \frac{k^2 M^2}{p_{\min}^2 \kappa_0^2} \log(n)$ for a large enough $c_0$ is enough to satisfy the requirements of Theorem \ref{thm:cv_minibatch}, and yields the same result as in the i.i.d. case (\cite[Theorem 10]{Chazal21}).
\item for $\beta(q) = C q^{-a}$, $\beta^{-1}(u) = (C/u)^{1/a}$. An optimal choice for $q$ is then $C n^{\frac{2}{a+2}}$, leading to a convergence rate in $n^{-1 + \frac{2}{a+2}}$. 
\end{itemize}
Let us mention here that the stronger condition in Theorem \ref{thm:cv_minibatch} leads to a slower convergence bound for the polynomial decay case, compared to the output of Algorithm \ref{algo:kmeans_batch}. Again, assessing optimality of exposed convergence rates remains an open question, up to our knowledge.

\subsection{Test with controlled type I error rate}\label{sec:theo_test}

In this section, we investigate the type I error of the test
\begin{align*}
T_\alpha \mapsto \1_{s(v) \geqslant t_{n,\alpha}},
\end{align*}
where $s$ is the score function built in Section \ref{sec:anomaly_detection} and $t_{n,\alpha}$ will be built from sample to achieve a type I error rate below $\alpha$. 

To keep it simple, we assume that $\Sigma$ and $\mu$ in \eqref{eq:mean_cov_vectorizations} are computed from a separate sample, so that we observe 
\begin{align*}
\tilde{v}_i=\Sigma^{-1/2}(v_i-\mu)
\end{align*}
from a stationary sequence of measures, resulting in a stationary sequence of vectors. Whether $\Sigma$ and $\mu$ should be computed on the same sample, extra terms involving concentration of $\Sigma$ and $\mu$ around their expectations should be added, as in the i.i.d. case.

We let $Z$ denote the common distribution of the $s(v_i)=\|\tilde{v}_i\|$'s,  that represent the "normal" behavior distribution of the time serie structure. For the test $T_\alpha$ introduced above, its type I error is then 
\begin{align*}
\P_{\|\tilde{v}\| \sim Z}(\|\tilde{v}\| > t_{n,\alpha}).
\end{align*}
A common strategy here is to choose a $t_{n,\alpha}$ from sample, such as
\begin{align*}
\frac{1}{n} \sum_{i=1}^n \1_{\|\tilde{v}_i\| > t_{n,\alpha}} \leqslant \alpha - \delta,
\end{align*}
for a suitable $\delta < \alpha$. In what follows we denote by $\hat{t}$ such an empirical choice of threshold. 
The following result ensures that this natural strategy remains valid in a dependent framework.

\begin{prop}\label{prop:test_beta_mixing}
Let $q \in [\![1,n]\!]$, and $\alpha$, $\delta$ be positive quantites that satisfy
\begin{align*}
5\sqrt{\alpha}\sqrt{\frac{\log(n)}{(n/q)}} \leqslant \delta < \alpha.
\end{align*}
If $\hat{t}$ is chosen such that 
\begin{align*}
\frac{1}{n} \sum_{i=1}^n \1_{\|\tilde{v}_i\| > \hat{t}} \leqslant \alpha - \delta,
\end{align*}
then, with probability larger than $1-\frac{4q}{n}- \beta(q) \sqrt{\frac{n}{\alpha q}}$, it holds
\begin{align*}
\P_{\|\tilde{v}\| \sim Y}(\|\tilde{v}\| > \hat{t}) \leqslant \alpha.
\end{align*}
\end{prop}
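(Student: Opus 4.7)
The plan is to reduce the problem to a one-sided concentration inequality at a single threshold, and then handle the $\beta$-mixing dependence via a block-coupling argument.

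Let $p(t) = \mathbb P_{\|\tilde v\| \sim Y}(\|\tilde v\| > t)$ and $\hat p_n(t) = \tfrac{1}{n}\sum_{i=1}^n \mathbf 1_{\|\tilde v_i\|>t}$. Both maps are non-increasing in $t$. Setting $t^* := \inf\{t : p(t) \leq \alpha\}$, right-continuity of $p$ gives $p(t^*) \leq \alpha$, and in the generic (continuous) case $p(t^*) = \alpha$; jumps at $t^*$ are handled by approximating $t^*$ from below. On the event $\{p(\hat t) > \alpha\}$, monotonicity of $p$ forces $\hat t < t^*$, and then monotonicity of $\hat p_n$ together with the hypothesis $\hat p_n(\hat t) \leq \alpha - \delta$ imply $\hat p_n(t^*) \leq \hat p_n(\hat t) \leq \alpha - \delta$. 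Thus it suffices to bound
\[
\mathbb P\bigl(\alpha - \hat p_n(t^*) \geq \delta\bigr),
\]
a single-threshold deviation for the Bernoulli variable $\mathbf 1_{\|\tilde v_i\| > t^*}$ with mean $\alpha$ and variance at most $\alpha$.

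To handle dependence, I would partition $\{1,\dots,n\}$ into $N := \lfloor n/(2q)\rfloor$ consecutive pairs of blocks of size $q$, and denote by $\hat p_n^O$ and $\hat p_n^E$ the empirical means over the odd- and even-indexed blocks. Writing $\hat p_n = \tfrac 12(\hat p_n^O + \hat p_n^E) + O(q/n)$ and using stationarity plus a union bound reduces the analysis to controlling $\hat p_n^O(t^*)$. Berbee's coupling lemma, iterated over pairs of blocks, produces a sequence $(\tilde v_i^*)_{i\in O}$ whose block-marginals coincide with those of $(\tilde v_i)_{i\in O}$, whose blocks are mutually independent, and such that the number $K$ of blocks where the coupling disagrees satisfies $\mathbb E[K] \leq N\beta(q)$ by linearity of expectation (each block fails marginally with probability at most $\beta(q)$).

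On the coupled sequence, the per-block averages $Y_\ell := q^{-1}\sum_{i\in B^{odd}_\ell}\mathbf 1_{\|\tilde v_i^*\|>t^*}$ are i.i.d., take values in $[0,1]$, and have mean $\alpha$ and variance at most $\alpha$. Bernstein's inequality then yields
\[
\mathbb P\bigl(\alpha - \hat p_n^{O,*}(t^*) \geq \delta/2\bigr) \leq \exp\!\bigl(-c\,N\delta^2/\alpha\bigr) \leq q/n,
\]
using the hypothesis $\delta \geq 5\sqrt{\alpha\log(n)/(n/q)}$ and $N \asymp n/(2q)$. The coupling gap is controlled by $|\hat p_n^O - \hat p_n^{O,*}| \leq K/N$, so Markov's inequality gives
\[
\mathbb P\bigl(|\hat p_n^O - \hat p_n^{O,*}| > \delta/2\bigr) \leq \frac{2\,\mathbb E[K/N]}{\delta} \leq \frac{2\beta(q)}{\delta} \leq \beta(q)\sqrt{\tfrac{n}{\alpha q}},
\]
where the last inequality again uses the lower bound on $\delta$. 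Summing the Bernstein contributions from the odd and even half-samples together with the boundary term of size $O(q/n)$ accounts for the $4q/n$ term; adding the coupling error yields the stated bound.

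\textbf{Main obstacle.} The delicate point is producing the $\beta(q)\sqrt{n/(\alpha q)}$ coupling term rather than the naive union-bound term $N\beta(q)\sim \beta(q)n/q$. This is obtained by applying Markov (not union bound) to the \emph{average} coupling discrepancy, whose expectation is only $\beta(q)$, and then trading the factor $1/\delta$ against the quantitative lower bound $\delta \geq 5\sqrt{\alpha\log(n)/(n/q)}$; this is what makes the mixing term genuinely benefit from the small variance $\alpha$ of the indicator at the threshold $t^*$.
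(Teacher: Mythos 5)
Your proof is correct, and it reaches the stated bound by a genuinely different route for the key concentration step. The paper controls the data-dependent threshold $\hat t$ by proving a \emph{uniform} relative deviation inequality over all thresholds, $\sup_t (F(t)-\tilde F_{n,j}(t))/\sqrt{F(t)} \leqslant 2x$ with failure probability $2n e^{-(n/2q)x^2}$ (a VC-type bound whose capacity term is at most $n$, cf.\ Lemma~\ref{lem:concentration_detection}), and then solves the resulting quadratic inequality in $\sqrt{F(\hat t)}$. You instead exploit monotonicity of $t \mapsto p(t)$ and $t \mapsto \hat p_n(t)$ to show that the failure event $\{p(\hat t) > \alpha\}$ is contained in a \emph{single-threshold} lower-deviation event at the deterministic population quantile $t^*$, which you then handle with a scalar Bernstein inequality on the i.i.d.\ coupled block averages; the $\sqrt{\alpha}$ in the condition on $\delta$ comes from the variance of the indicator at $t^*$ rather than from the $\sqrt{F(t)}$ normalization. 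The surrounding machinery — Berbee coupling by blocks of size $q$, odd/even splitting, and Markov's inequality applied to the average coupling discrepancy (which is exactly how the paper also obtains $\beta(q)\sqrt{n/(\alpha q)}$ rather than the naive $(n/q)\beta(q)$) — is the same in both arguments. Your route is more elementary in that it avoids the symmetrization/capacity argument entirely, at the price of being specific to monotone threshold selection; the paper's uniform bound is a more flexible tool. Two small points to tighten: in the presence of an atom at $t^*$ you should work with $\frac1n\sum_i \1_{Z_i \geqslant t^*}$, whose mean $p(t^{*-})$ is $\geqslant \alpha$ rather than $=\alpha$, and check that the Bernstein exponent is monotone in the required deviation $p(t^{*-})-\alpha+\delta$ (it is); and the claim that the block average has variance at most $\alpha$ despite within-block dependence needs the Cauchy--Schwarz bound $\Var(q^{-1}\sum_i W_i) \leqslant \Var(W_1)$, which holds. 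Neither affects correctness.
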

In other words, Proposition \ref{prop:test_beta_mixing} ensures that the anomaly detection test $\1_{\|\tilde{v}\| \geqslant \hat{t}}$ has a type I error below $\alpha$, with high probability. Roughly, this bound ensures that, for confidence levels $\alpha$ above the statistical uncertainty of order $q/n$, tests with the prescribed confidence level may be achieved via increasing the threshold by a term of order $\sqrt{\alpha q/n}$. 

As for Theorem \ref{thm:cv_batch} or \ref{thm:cv_minibatch}, choosing the smaller $q$ that achieves $\beta(q)^2/q^3 \leqslant \alpha/n^3$ optimizes the probability bound in Proposition \ref{prop:test_beta_mixing}:
\begin{itemize}
\item for $\beta(q) \leqslant C \rho^q$, $q$ of order $C' \log(n)$ is enough to satisfy $\beta(q)/q^2 \leqslant n^{-2}$, providing the same results as in the i.i.d. case, up to a $\log(n)$ factor.
\item for $\beta(q) = Cq^{-a}$, an optimal choice for $q$ is of order $ n^{\frac{3}{2a+3}}\alpha^{-1/(2a+3)}$, that leads to the same bounds as in the i.i.d. case, but with useful sample size $n/q = n^{\frac{2a}{2a+3}} \alpha^{1/(2a+3)}$.
\end{itemize} 

Although this bound might be sub-optimal, it can provides some pessimistic prescriptions to select a threshold $\alpha - \delta$, provided that the useful sample size $n/q$ is known. For instance, assuming $\log(n) \leqslant 6$, for $\alpha = 5 \%$ the minimal $\delta$ is of order $2.7/\sqrt{n/q}$, that is neglictible with respect to $\alpha$ whenever $n/q$ is large compared to roughly $3000$.

\subsection{Theoretical guarantees for persistence diagrams built from multivariate time series}\label{sec:theo_measures_PD}

We discuss here how the outputs $(X_t)_{t=1, \hdots n}$ of Algorithm \ref{algo:timeserie_to_pers_diagrams}, based on a $D$-dimensional time serie $(Y_t)_{t\in [0;L]}$, fall in the scope of the previous sections. Recall that a persistence diagram may be thought of as a discrete measure on $\R^2$ (see Section \ref{sec:centroids}). In a nutshell, if $(Y_t)_{t\in [0;L]}$ is stationary with a certain profile of mixing coefficients, then so would $(X_t)_{t=1, \hdots n}$.

\smallskip

\noindent \textit{Stationarity}: Since, for $t=1, \hdots, n$, $X_t$ may be expressed as $f((Y_u)_{u \in [s(t-1);s(t-1) + \Delta]}$, for some function $f$, then stationarity of $(X_t)_{t=1, \hdots, n}$ follows from stationarity of $(Y_t)_{t \in [0;L]}$.

\medskip

\noindent \textit{Boundedness}: We intend to prove that the outputs of Algorithm \ref{algo:timeserie_to_pers_diagrams} are in the scope of Definition \ref{def:bounded_measures}. Let $d$ be an homology dimension, $t \in [\![1;n]\!]$, and recall that $X_t$ is the order $d$ persistence diagram built from the Vietoris-Rips filtration of $([\![1;D]\!],E,S_t)$, where $E$ is the set of all edges and $S_t=1-Corr(Y_{[s(t-1);s(t-1) + \Delta]})$ gives the weights that are filtered (see Section \ref{sec:methodo_persistence}). First note that, for every $1 \leqslant i,j \leqslant D$, $S_{t, (i,j)} \in [0;2]$, so that every point in the persistence diagram is in $[0;2]^2$.  Next, since a birth of a $d$-order feature is implied by an addition of a $d$-order simplex in the filtration (see for instance \cite{boissonnat2018geometric} Section 11.5, Algorithm 11), the total number of points in the diagram is bounded by $\binom{D}{d+1}$. At last, for a bounded weight function $\omega$, the total mass of $X_t$ may be bounded by $\binom{D}{d+1} \times \| \omega \|_\infty$. We deduce that $X_t \in \mathcal{M}_{N_{\max}}(R,M)$ (Definition \ref{def:bounded_measures}), with $R \leqslant 4$, $N_{\max} \leqslant \binom{D}{d+1}$, and $M \leqslant \binom{D}{d+1} \times \| \omega \|_\infty$. Note that in the experiments, we set $\omega \equiv 1$.

\medskip

\noindent \textit{Mixing coefficients}: Here we expose  how the mixing coefficients of $(X_t)_{t =1, \hdots, n}$ (Definition \ref{def:beta_mixing}) may be bounded in terms of those of $(Y_t)_{t \in [0;L]}$. Let us denote these coefficients by $\beta$ and $\tilde{\beta}$. If the stride $s$ is larger than the window size $\Delta$, then it is immediate that, for all $q\geqslant 1$, $\beta(q) \leqslant \tilde{\beta}(qs-\Delta)$. If the stride $s$ is smaller (or equal) than $\Delta$, then, denoting by $q_0 = \lfloor (\Delta/s) \rfloor +1$, we have, for $q < q_0$, $\beta(q) \leqslant 1$ (overlapping windows), and, for $q \geqslant q_0$, $\beta(q) \leqslant \tilde{\beta}(qs - \Delta)$. The mixing coefficients of $X_t$ may thus be controlled in terms of those of $Y_t$. For fixed $\Delta$ and $s$, this ensures that mixing coefficients of $X_t$ and $Y_t$ have the same profile (and leads to the same convergence rates in Theorem \ref{thm:cv_batch} and \ref{thm:cv_minibatch}).
\begin{itemize}
\item If $\tilde{\beta}(q) \leqslant C_Y q^{-a}$, for $C_Y,a > 0$, then, for any $q \geqslant q_0$, $\beta(q) \leqslant C_Y \left ( s - \Delta/q_0 \right )^{-a} q^{-1}$, so that $\beta(q) \leqslant C_X q^{-a}$, for some constant $C_X$ (depending on $q_0$, $s$ and $a$).
\item If $\tilde{\beta}(q) \leqslant C_Y \tilde{\rho}^q$, for $C_Y >0$ and $\tilde{\rho} < 1$, then, for any $q \geqslant q_0$, $\beta(q) \leqslant C_Y(\tilde{\rho}^{(s-(\Delta/q_0))})^q$, so that $\beta(q) \leqslant C_X \rho^q$, for some $C_X >0$ and $\rho < 1$ depending on $q_0$, $s$ and $\tilde{\rho}$.
\end{itemize} 
In turn, mixing coefficients of $Y_t$ may be known or bounded, for instance in the case where it follows a recursive equation (see, e.g., \cite[Theorem 3.1]{Pham85}), or inferred (see, e.g., \cite{McDonald15}). Interestingly, the topological wheels example provided in Section \ref{sec:top_wheels} (borrowed from \cite{Bourakna22}) falls into the sub-exponential decay case.

\medskip

\textit{Margin condition}: The only point that cannot be theoretically assessed in general for the outputs of Algorithm \ref{algo:timeserie_to_pers_diagrams} is to know whether $\E(X)$ satisfies the margin condition exposed in Definition \ref{def:margincondition}. As explained below Definition \ref{def:margincondition}, a margin condition holds whenever $\E(X)$ is concentrated enough around $k$ poles. Thus, structural assumptions on $1-Corr(Y_{[0;\Delta]})$ (for instance $k$ prominents loops) might entail $\E(X)$ to fulfill the desired assumptions (as in \cite{Levrard15} for Gaussian mixtures). However, we strongly believe that the requirements of Definition \ref{def:margincondition} are too strong, and that convergence of Algorithms \ref{algo:kmeans_batch} and \ref{algo:kmeans_minibatch} may be assessed with milder smoothness assumptions on $\E(X)$. This fall beyond the scope of this paper, and is left for future work. The experimental Section \ref{sec:applications} to follow assesses the validity of our algorithms in practice.

\section{Applications}\label{sec:applications}

In order to make the case for the efficiency of our proposed anomaly detection procedure TADA, we now present an assortment of both real-case and synthetic applications.
The first application we call the {Topological Wheels} problem that is directly derived from \cite{Bourakna22} to show the relevance of a topologically based anomaly detection procedure on complex synthetic data that is designed to mimic dependence patterns in brain signals.
The second application is an up-to-date replication of a benchmark with the {TimeEval library} from \cite{SchmidlEtAl2022Anomaly} on a large array of synthetic datasets to quantitatively demonstrate competitiveness of the proposed procedure with current state-of-the-art methods.
The third application is a real-case dataset from \cite{exathlon} consisting in data traces from repeated executions of large-scale stream processing jobs on an Apach Spark cluster.
%The third application relies on a new real-case and proprietary dataset from industrial NavalGroup collected from vibration sensors on vessel meant to detect machine malfunctions. The dataset includes anomalies validated by expert that showcase qualitative anomaly detection of our proposed approach.
Lastly we produce interpretability elements for the anomaly detection procedure TADA.

%metrics
Evaluation of an anomaly detection procedure in the context of time series data has many pitfalls and can be hard to navigate, we refer to the survey of \cite{sorbo23}. Here we mainly evaluate anomaly scores with the robustified version of the Area Under the PR-Curve: the Range PR\_AUC metric of \cite{Paparrizos2022} (later just "RANGE\_PR\_AUC"), where a metric of 1 indicates a perfect anomaly score, and a metric close to 0 indicates that the anomaly score simply does not point to the anomalies in the dataset. For the sake of comparison with the literature we also include the Area Under the ROC-Curve metric (later just "ROC\_AUC") although as \cite{sorbo23} demonstrated it is simply less accurate and powerful a metric in the unbalanced context of anomaly detection.
Therefore each collection of anomaly detection problems will yield evaluation statistics. To summarize comparisons between algorithms we use a critical difference diagram, that is a statistical test between paired populations using the package \cite{Herbold2020}. 
We introduce two other statistical summary of interest:
\begin{itemize}
	\item the "$\# >.9$" metric, which we introduce for the number of anomaly detection problems an algorithm has a RANGE\_PR\_AUC over .9, which we roughly translates as "finding" the anomalies in the dataset or "solving" the problem,
	\item the "$\# \text{rank}1$" metric, which we introduce for the number of problems where an algorithm reaches the best PR\_AUC score over other algorithms. If a method reaches a not negligible number, this indicates that it makes sense to use the method for solving this kind of problem.
\end{itemize}

% competitors
For the purpose of comparison with the state-of-the-art we draw from the recent benchmark of \cite{SchmidlEtAl2022Anomaly}. We take the \textit{three best performing} methods from the unsupervised, multivariate category: the "KMeansAD" anomaly detection based on k-means cluster centers distance using ideas from \cite{Yairi2001FaultDB}, the baseline density estimator k-nearest-neighbours algorithm on time series subsequences "SubKNN", and "TorskAD" from \cite{Heim2019AdaptiveAD}, a modified echo state network for anomaly detection.
In order to understand better the value of the introduced topological methodology, we also couple the topological features of Algorithm \ref{algo:vec} to the isolation forest algorithm from \cite{IsoForest08} for an \textit{unsupervised} anomaly detection method denominated as "Atol-IF" in reference to the \cite{Royer19} paper.
For an upper bound on what can be achieved on the first collection of problem we also couple those topological features to a random forest classifier \cite{Breiman01}, resulting in a \textit{supervised} anomaly detection method denominated as "Atol-RF".
Lastly for discriminating effects of a spectral analysis respective to a topological analysis, we compute spectral features on the correlation graphs coupled to either an isolation forest or to a random forest classifier, in an \textit{unsupervised} anomaly detection method denominated as "Spectral-IF" and a \textit{supervised} one named "Spectral-RF".

%reversewindowing
In practice all those methods involve a form of time-delay-embedding or subsequence analysis or context window analysis (we use these terms synonymously in this work), that requires to compute a prediction from a window size number $\Delta$ of past observations. $\Delta$ is a key value that acts as the equivalent of image resolution or scale in the domain of time series.
In using a subsequence analysis, given a $\Delta$-uplet of timestamps $[t] := (t_1, t_2, ..., t_{\Delta})$, once an anomaly score $s_{[t]}$ is produced it is related to that particular $\Delta$-uplet but does not refer to a specific timestep. A window reversing step is needed to map the scores to the original timestamps.
For fair comparison, we will provide all methods with the following (same) last-step window reversing procedure: for every timestep $t$, one computes the sum of windows containing this timestep $\hat{s}_t := \sum_{[t']: t \in [t']} s_{[t']}$. Here we select not to use the more classical average $\tilde{s}_t := \sum_{[t']: t \in [t']} s_{[t']} / \sum_{[t']: t \in [t']} 1$, as this average produces undesirable border effects because the timestamps at the beginning and end of the signal are contained in less windows, which in turn makes them over-meaningful after averaging. Using the sum instead has no effect on anomaly scoring as the metrics are scale-invariant.

%implementation low level
For the specific use of TADA in this section, the centroids computation part of Section \ref{sec:centroids} is made using $\omega_{(b,d)} = 1$ and computed with the batch version described in Algorithm \ref{algo:kmeans_batch}.
Our implementation relies on \cite{gudhi} for the topological data analysis part, but also makes use of the \cite{sklearn} Scikit-learn library for the anomaly detection part, minimum covariance determinant estimation and overall pipelining. The code is published as part of the ConfianceAI program \url{https://catalog.confiance.ai/} and can be found in the catalog: \url{https://catalog.confiance.ai/records/4fx8n-6t612}. For now its access is restricted but it will become open-source in the following months.
%computation
All computations are possible and were in effect made on a standard laptop (i5-7440HQ 2.80 GHz CPU).

\subsection{Introducing the {Topological Wheels} dataset.}\label{sec:top_wheels}

In this first application section we introduce a hard, multiple time series unsupervised problem that emulates brain functions, and then solve the problem with our proposed method and compare solutions with state-of-the-art anomaly detection methods as well as supervised concurrent methods.

%datasets
\cite{Bourakna22} introduces ideas for evaluating methodologies relying on TDA such as ours. They allow to produce a multiple time series with a given node dependence structure from a mixture of latent autoregressive process of order two (AR2). One direct application for this type of data generation is to emulate the network structure of the brain, whose normal connectivity is affected by conditions such as ADHD or Alzheimer's disease. Therefore in accordance with \cite{Bourakna22} we design and introduce the {Topological Wheels} problem: a multiple time series datasets with a latent dependence structure "type I" as a normal mode, and sometimes a "type II" latent dependence structure as an abnormal mode. For the type I dependence structure we use a single wheel where every node are connected by pair, and every pair are connected to two others forming a wheel; then we connect a pair of pairs forming an 8 shape or double wheel, see Figure \ref{fig:tw-connect}. And for the type II structure we start from a double wheel and add another connection between two pairs. The first mode of dependence is the prominent mode for the timeseries duration, and is replaced for a short period at a random time by the second mode of dependence. The total signal involves 64 timeseries sampled at 500 Hz for a duration of 20 seconds, see Figure \ref{fig:tw-connect}. We produce ten such datasets and call them the {Topological Wheels} problem. It consists in being able to detect the change in underlying pattern without supervision. We note that by design the two modes are similar in their spectral profile, so overall detecting anomalies should be hard for methods that do not capture the overall topology of the dependency structure. The datasets are available through the public \cite{gudhi} library at \url{github.com/GUDHI/gudhi-data}.

\begin{figure}[h!]
	\centering
	\begin{subfigure}[m]{.49\linewidth}
		\includegraphics[width=\linewidth]{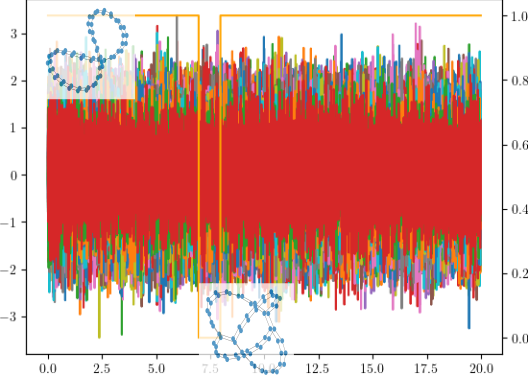}
	\end{subfigure}
	\begin{subfigure}[m]{.5\linewidth}
	\centering
	\includegraphics[width=\linewidth]{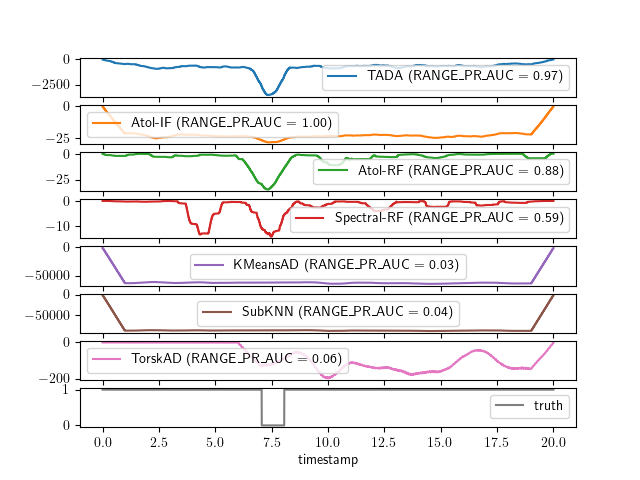}
	\end{subfigure}
	\caption{Left: Synthetised time series and the latent generating process (orange) indicating normal connexion (double circular wheel with middle connexion, on top) or abnormal connexion (double circular wheel with two connexions, on the bottom). Right: Anomaly scores of all tested methods on one of the datasets, and their RANGE\_PR\_AUC metric on comparing with the truth (bottom row) in parenthesis.}
	\label{fig:tw-connect}
\end{figure}

%learning and calibration
For learning this problem we use a cross-validation like procedure with a focus on evaluation: we perform ten experiments and for each experiment, every method is fitted on one dataset and evaluated on the other nine datasets. We then rotate the training dataset until all ten datasets have been used for training.
We use this particular setup in order to be able to compare supervised and unsupervised methods on comparable grounds.
%calibration
As for method calibration we note the following: all methods are given (and use) the real length of the anomalous segment of $\Delta=500$ consecutive timestamps, and since all of them use window subsequences we set the same stride for all to be $s=10$. Lastly for all methods that use a fixed-size embedding ({Atol}-based methods and "{Spectral-RF}") we set the same size of the support to be $K=10$.

%results one
We first show in Figure \ref{fig:tw-connect} the results of one iteration of learning, that is when all methods are trained on a topological wheels dataset and evaluated on another.
The last row of the figure with label "truth" shows the underlying signal value of the evaluated dataset. The other rows are the computed anomaly score of each method along the time x-axis, with the convention that the lower the score, the more abnormal the signal. The corresponding RANGE\_PR\_AUC score of each method is written in the label.
This first example confirms the intuition that methods that do not rely on topology, that is the spectral method, the k-nearest-neighbour method and the modified echo state network method all fail to capture the anomaly. This is particularly striking for the spectral method as it was trained with supervision.
On the other hand all methods based on the topological features manage to capture some indication that there is anomaly in the signal. For the isolation forest method, even though it clearly separates the anomalous segment from the rest, it is not reliable as it seems to indicate other anomalies when there aren't. The random forest supervised method perfectly discriminates the anomalous segment from the rest of the time series, and so does our method almost as reliably.

\begin{figure}[h]
	\centering
	\begin{subfigure}[m]{.7\linewidth}
		\includegraphics[width=\linewidth]{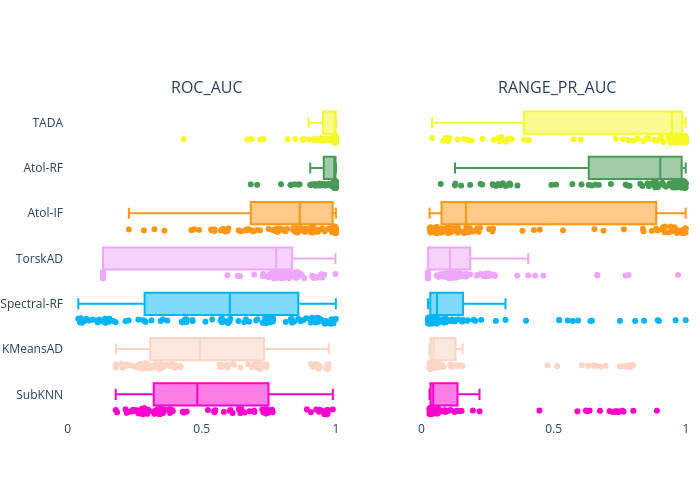}
	\end{subfigure}
	\begin{subfigure}[m]{.28\linewidth}
		\includegraphics[angle=90, width=\linewidth]{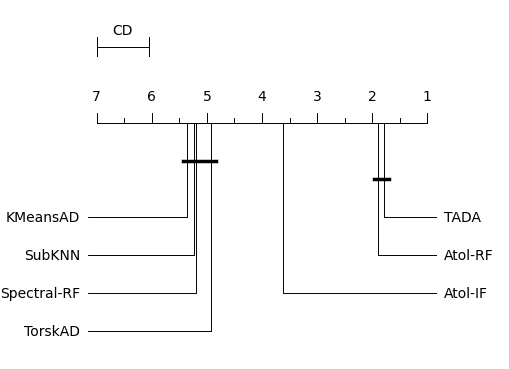}
	\end{subfigure}
	\caption{Left: aggregated results for the Topological Wheels problem in the form of box plots for the ROC\_AUC and RANGE\_PR\_AUC metrics, and below them the points the metrics have been computed from - where each point represent a metric score from comparing an anomaly score to the underlying truth. Right: autorank summary (see \cite{Herbold2020}) ranking of methods on the Toplogical Wheels problem, showing competitiveness between our unsupervised TADA method and the equivalent topological supervised method.}
	\label{fig:tw-results}
\end{figure}

\begin{table}[h]
\begin{center}
	\begin{tabular}{llrrrrr}
\toprule
  algorithm &  &  \#xp &  \#$>$.9 &  \#rank1 &  med time &  iqr time \\
\midrule
    Atol-IF & (unsupervised) &   90 &    21 &       7 &    21.070 &     0.099 \\
    Atol-RF & (supervised) &   90 &    45 &      38 &    21.104 &     0.104 \\
   KMeansAD & (unsupervised)&   90 &     0 &       0 &     0.801 &     0.110 \\
Spectral-RF & (supervised) &   90 &     2 &       3 &     4.737 &     0.022 \\
     SubKNN & (unsupervised)&   90 &     0 &       0 &    35.287 &     0.245 \\
       TADA & (unsupervised)&   90 &    54 &      40 &    21.128 &     0.115 \\
    TorskAD & (unsupervised)&   90 &     1 &       2 &   112.841 &     1.973 \\
\bottomrule
\end{tabular}

	\caption{Summary statistics on the Topological Wheels problem for the algorithms evaluated. All methods could produce scores for the 90 experiments, and without surprise the methods relying on topological analysis are overwhelmingly dominating other methods in 88 of 90 experiments. Our unsupervised method TADA is on par with a supervised learning method for the number of problem where it has the best PR\_AUC score  ($\#\text{rank}1$ column). In seconds, the computation median time ("med time") and interquartile range ("iqr time") are standard with respect to the data sizes - also note that computations are not optimized, and in fact performed on a single laptop.}
	\label{tab:tw-times}
\end{center}
\end{table}

%results aggregated
We now look at the aggregated results for the entire problem, see Figure \ref{fig:tw-results} and times Table \ref{tab:tw-times}.
We present both ROC\_AUC and RANGE\_PR\_AUC averages with their standard deviations over experiments, as well as the computation times for the sake of completeness.
Neither the spectral procedure nor the echo state network, subKNN or k-nearest-neighbour method are able to capture any information from the Topological Wheels problem.
Using topological features with an isolation forest yields competitive results but it is simply inferior to our procedure. This demonstrates that the key information to this problem lies in the topological embedding which is not surprising, by design.
Our procedure solves this problem almost perfectly, and although it is unsupervised it is as competitive as a comparable supervised method.
This experiment demonstrates the impact of topology-based methods for anomaly detection, as the non-topology method fail to capture any of the signal in the datasets. Our proposed {TADA} method is clearly the best suited for learning anomalies in this setup.

\subsection{A benchmark using the TimeEval library.}

%datasets
We now look at a broader and more general arrays of problems to evaluate the competitiveness of our method in comparison with state-of-the-art methods.
For that purpose, we use the GutenTAG multivariate datasets, drawn from \cite{WenigEtAl2022TimeEval}. We chose the GutenTAG datasets for the ability to generate a great (1084) number of varied anomaly detection problems; they are mostly formed from inserting anomalies of various lengths in frequency or variance or extremum values into a multivariate time series of 10000 timestamps.
%Next we select the 35 Daphnet real datasets from \cite{daphnets} consisting in gait data from acceleration sensors on patients where anomalies are various freezes in walks or during daily activities.
%calibration
As the anomalies in these dataset seem to have an average size of 100, we set the window sizes of the anomaly detectors to be a fixed $\Delta=100$. Other than that, all other parameters from the previous section are left unchanged.

\begin{figure}[h]
	\centering
	\begin{subfigure}[m]{.7\linewidth}
		\includegraphics[width=\linewidth]{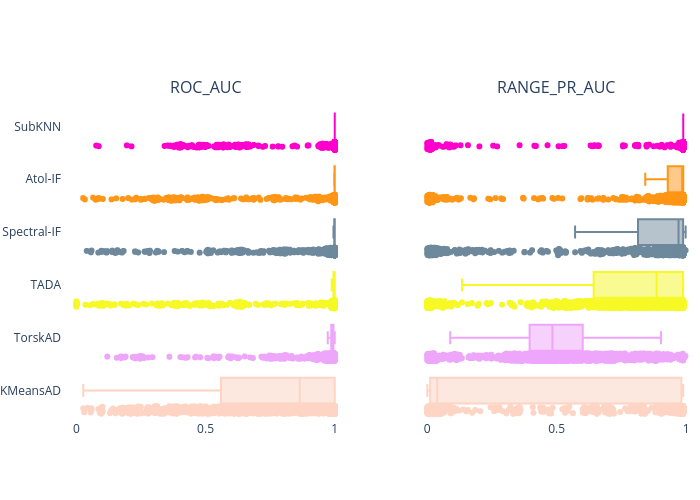}
	\end{subfigure}
	\begin{subfigure}[m]{.28\linewidth}
		\includegraphics[angle=90, width=\linewidth]{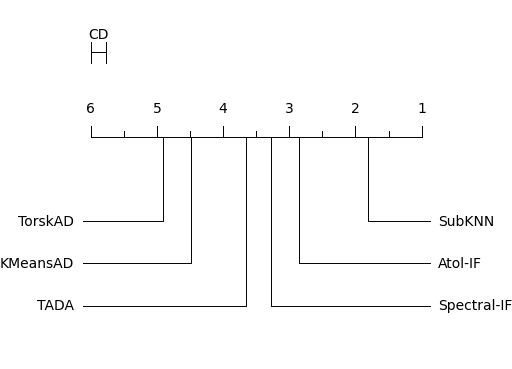}
	\end{subfigure}
	\caption{Left: aggregated results for the TimeEval 1084 GutenTAG synthetic datasets in the form of box plots for the ROC\_AUC and RANGE\_PR\_AUC metrics, and below them the points the metrics have been computed from - where each point represent a metric score from comparing an anomaly score to the underlying truth. Right: autorank summary (see \cite{Herbold2020}) ranking of methods, showing fourth place ranking for our purely topological TADA method.}
	\label{fig:GutenTAG}
\end{figure}

\begin{table}[h]
\begin{center}
	\begin{tabular}{lrrrrr}
\toprule
  algorithm &  \#xp &  \#$>$.9 &  \#rank1 &  med time &  iqr time \\
\midrule
    Atol-IF & 1084 &   844 & 396 &     4.137 &     0.029 \\
   KMeansAD & 1084 &   366 & 213 &     2.327 &     0.375 \\
Spectral-IF & 1084 &   751 & 281 &     1.502 &     0.014 \\
     SubKNN & 1084 &   944 & 939 &     1.244 &     0.502 \\
       TADA & 1084 &   521 & 255 &     4.525 &     0.062 \\
    TorskAD & 1084 &    37 &  32 &     8.459 &     0.250 \\
\bottomrule
\end{tabular}

	\caption{Summary statistics on the GutenTAG problem set for the algorithms evaluated. Even though it is ranked fourth by the statistical pairwise-ranking in Figure \ref{fig:GutenTAG}, our unsupervised method TADA is able to solve roughly half the sets of problems, and being competitive on other 250 of them. The other topological method Atol-IF is ranked second and is able to solve 844 of the 1084 problems. In seconds, the computation median time ("med time") and interquartile range ("iqr time") are standard with respect to the data sizes involved - computations are performed on a single laptop.}
	\label{tab:GutenTAG_stats}
\end{center}
\end{table}

\begin{figure}[h!]
	\centering
	\begin{subfigure}[m]{.52\linewidth}
		\includegraphics[width=\linewidth]{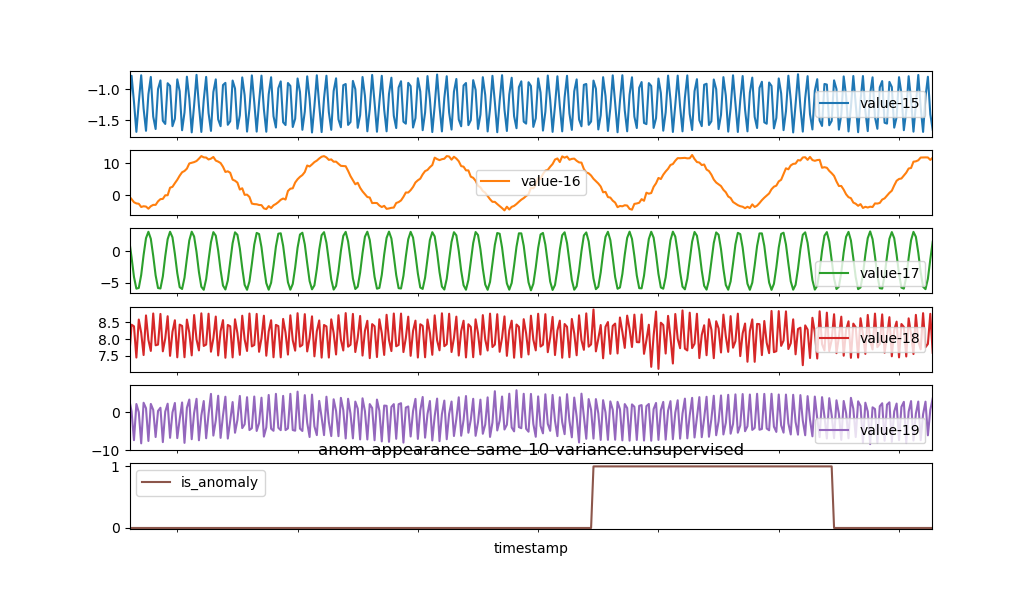}
	\end{subfigure}
	\hspace{-.9cm}
	\begin{subfigure}[m]{.51\linewidth}
		\includegraphics[width=\linewidth]{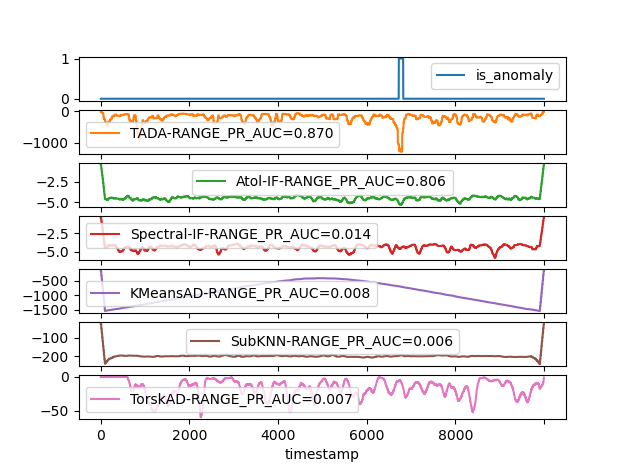}
	\end{subfigure}
	\caption{Left: zoom in on a GutenTAG problem instance (sensors selected for visualization) with a variance anomaly for the last two sensors (purple and red), and the ground truth (last row). Right: problem ground truth (top row) and the corresponding anomaly scores of each method with their RANGE\_PR\_AUC metric in parenthesis. The topological methods TADA and Atol-IF get a good metric score and manage to find the anomalies, while no other method does.}
	\label{fig:GutenTAG_example}
\end{figure}

%results
Statistical summaries and results on the synthetic datasets are shown Figure \ref{fig:GutenTAG}, and in Table \ref{tab:GutenTAG_stats}. As a remainder, the SubKNN, KMeansAD and TorskAD methods were the top three performing methods from the largest anomaly detection benchmark to date (see Table 3 from \cite{SchmidlEtAl2022Anomaly}).
Our TADA procedure manages to solve roughly half the problems and is a top contender among competitors for about a quarter of them. Atol-IF performs better than TADA in this instance, which is not surprising as isolation forest retain much more information from training than TADA, which also implies heavier memory loads. 
Overall SubKNN is able to perform the best on those datasets, and TADA and Atol-IF show good performances, and in some instances only the topological methods manage to solve the problem, see for instance Figure \ref{fig:GutenTAG_example}. These results demonstrate competitiveness of our methodology in the unsupervised anomaly detection learning context.

\subsection{Exathlon real datasets}

Lastly we turn to a real collection of datasets: the 15 Exathlon datasets from \cite{exathlon} consisting in data traces from repeated executions of large-scale stream processing jobs on an Apach Spark cluster, and anomalies are intentional disturbances of those jobs.

\begin{figure}[h]
	\centering
	\begin{subfigure}[m]{.7\linewidth}
		\includegraphics[width=\linewidth]{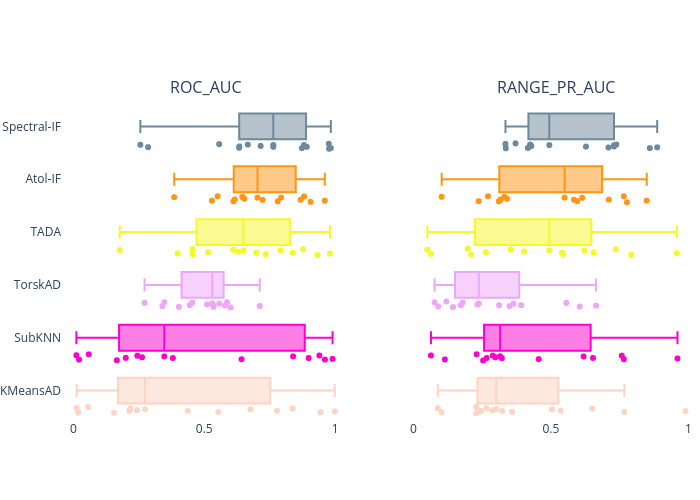}
	\end{subfigure}
	\begin{subfigure}[m]{.28\linewidth}
		\includegraphics[angle=90, width=\linewidth]{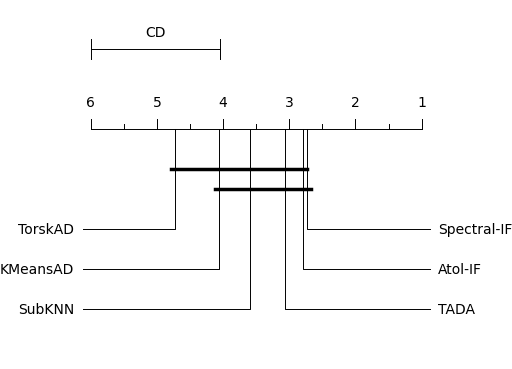}
	\end{subfigure}
	\caption{Left: aggregated results for the 35 Exathlon real datasets in the form of box plots for the ROC\_AUC and RANGE\_PR\_AUC metrics, and below them the points the metrics have been computed from - where each point represent a metric score from comparing an anomaly score to the underlying truth. Right: autorank summary (see \cite{Herbold2020}) ranking of methods on the real datasets, showing second and third place ranking for the topological methods Atol-IF and TADA.}
	\label{fig:Exathlon}
\end{figure}

\begin{table}[h]
	\begin{center}
		\begin{tabular}{lrrrrr}
\toprule
  algorithm &  \#xp &  \#$>$.9 &  \#rank1 &  med time &  iqr time \\
\midrule
    Atol-IF &   15 &     0 &       3 &     9.998 &   448.273 \\
   KMeansAD &   15 &     1 &       1 &     3.336 &     2.568 \\
Spectral-IF &   15 &     0 &       4 &     6.212 &     1.503 \\
     SubKNN &   15 &     1 &       2 &     8.604 &    13.357 \\
       TADA &   15 &     1 &       5 &    11.516 &   450.723 \\
    TorskAD &   15 &     0 &       0 &    34.282 &    37.395 \\
\bottomrule
\end{tabular}

		\caption{Summary statistics on the Exathlon real data problems for the algorithms evaluated. Even though it is ranked third by statistical ranking in Figure \ref{fig:Exathlon}, our unsupervised method TADA is the top RANGE\_PR\_AUC score ($\#\text{rank}1$ column) over all problems, which hints that it is able to solve different sorts of anomaly detection problems than the others. In seconds, the computation median time ("med time") and interquartile range ("iqr time") are high for the topological methods, see commentaries in the text. Computations are performed on a single laptop.}
		\label{tab:Exathlon_stats}
	\end{center}
\end{table}

Using the same metrics, collection of anomaly detection methods and exact same calibration as in the previous TimeEval experiment, we produce the following results. The main statistical summaries are presented on Figure \ref{fig:Exathlon} and Table \ref{tab:Exathlon_stats}.
Overall the topological methods are strong competitors for these datasets, with TADA coming off as the most often number one ranked method.
Due to the real nature of the datasets it is not surprising that the studied methods do not "solve" them in a way those methods were able to solve the GutenTAG datasets of the TopologicalWheels datasets. We show in Figure \ref{fig:Exathlon_example} the one instance where TADA is able to solve the problem completely, and highlight that is has happened without any calibration.

\begin{table}[h]
	\begin{center}
		\begin{tabular}{lrrr}
\toprule
        dataset &  TADA computing time (s) &  n\_sensors &  n\_timestamps \\
\midrule
10\_2\_1000000\_67 &        1127.328406 &        165 &         10250 \\
10\_3\_1000000\_75 &          10.981673 &          8 &         46656 \\
10\_4\_1000000\_79 &           7.367096 &          3 &         43086 \\
  2\_2\_200000\_69 &         121.436984 &        130 &          2874 \\
 3\_2\_1000000\_71 &         457.991445 &        194 &          2474 \\
  3\_2\_500000\_70 &         591.719635 &        208 &          2611 \\
  5\_1\_500000\_62 &          14.912100 &         16 &         46660 \\
 5\_2\_1000000\_72 &         460.615526 &        195 &          2481 \\
  6\_1\_500000\_65 &          11.515798 &         12 &         46649 \\
  6\_3\_200000\_76 &           9.138171 &          7 &         46654 \\
  8\_3\_200000\_73 &           8.023327 &          4 &         46641 \\
 8\_4\_1000000\_77 &           7.096984 &          2 &         43078 \\
 9\_2\_1000000\_66 &        3046.870513 &        239 &          7481 \\
  9\_3\_500000\_74 &           9.379143 &          7 &         46650 \\
 9\_4\_1000000\_78 &           7.405223 &          3 &         43105 \\
\bottomrule
\end{tabular}

		\caption{Summary computation times for TADA and problem sizes on the Exathlon real datasets.}
		\label{tab:Exathlon_tada_stats}
	\end{center}
\end{table}

%results
\begin{figure}[h!]
	\centering
	\begin{subfigure}[m]{.49\linewidth}
		\includegraphics[width=\linewidth]{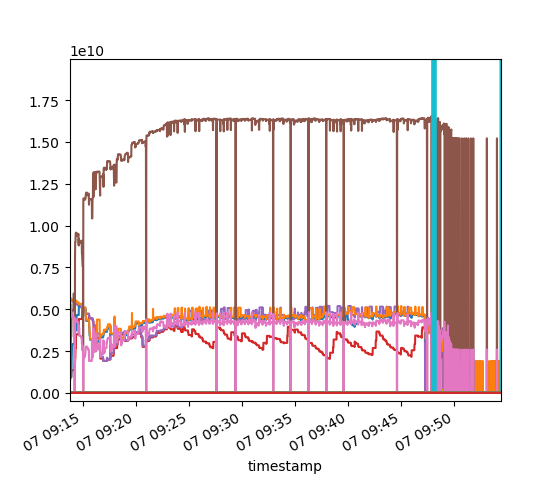}
	\end{subfigure}
	\hspace{-.9cm}
	\begin{subfigure}[m]{.55\linewidth}
		\includegraphics[width=\linewidth]{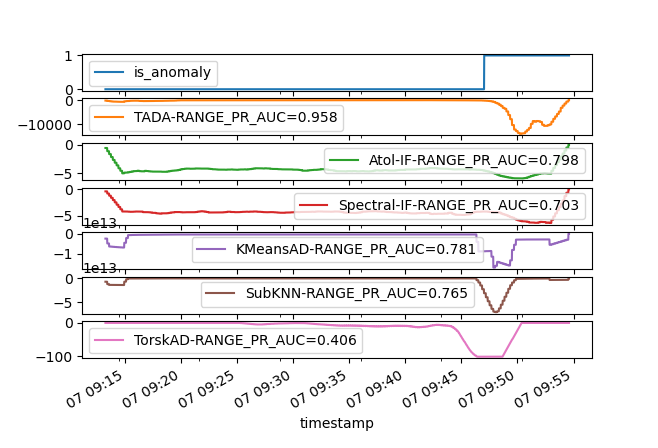}
	\end{subfigure}
	\caption{Left: zoom in on dataset 3\_2\_1000000\_71. Right: ground truth (top row) and anomaly scores for the six methods and their RANGE\_PR\_AUC score in parenthesis. While all methods capture locate the beginning of the anomaly period correctly, only TADA manages to catch it in its entirety.}
	\label{fig:Exathlon_example}
\end{figure}

One drawback of the topological methods appearing here is the high variance in execution time, which originates from computing topological features on a great number of sensors, see Table \ref{tab:Exathlon_tada_stats} for a scaling intuition.
As our implementation of Algorithm \ref{algo:detection_score} is naive, we point that there are strategies for optimizing computation times: ripser, subsampling, clustering that makes sense, etc. Those strategies are outside the scope of this paper.

\subsection{Score interpretability}

The anomaly score we introduce is constructed from estimating the mean measure of persistence diagrams supported by $K$ centroids, and analysing the resulting embedding's main distribution features. Once these centers are learnt it is possible to engineer anomaly scores respective to a particular center, or possibly to a set of centers e.g. centers associated with a particular homology dimension. Let us examine this first possibility, and introduce the center-targeted scores:
\begin{align}
	\tilde{s}_i = \hat{\Sigma}_{ii}^{-1/2} |v_i - \hat{\mu}_i|,
	%\tilde{v}_{\text{PH}_0}&= [v_j \text{ if } j \leftrightarrow \text{PH}_0, \hat{\mu}_j \text{ else}]\\
	%\tilde{v}_{\text{PH}_1}&= [v_j \text{ if } j \leftrightarrow \text{PH}_1, \hat{\mu}_j \text{ else}],
\end{align}
where $\hat{\mu}, \hat{\Sigma}$ are the estimated mean and covariance of the vectorization $v$ of Algorithm \ref{algo:vec} (time indices are implied and ommited for this discussion). These scores can be interpreted as testing for anomalies with respect to a single embedding dimension as if the vectorization had independent components. These center-targeted scores allow to analyze an original anomaly by looking at the score deviations of each vector component. Because the vector components are integrated from a learnt centroid, the scores can be traced back to a specific region in $\mathbb{R}^2$, see for instance Figure \ref{fig:tw-interpretability}.

\begin{figure}[h]
	\centering
	\begin{subfigure}[m]{.55\linewidth}
		\centering
		\includegraphics[width=\linewidth]{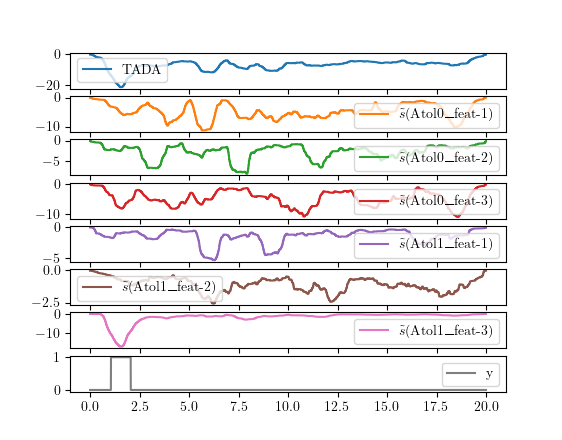}
	\end{subfigure}
	\hspace{-1cm}
	\begin{subfigure}[m]{.5\linewidth}
		\centering
		\includegraphics[width=\linewidth]{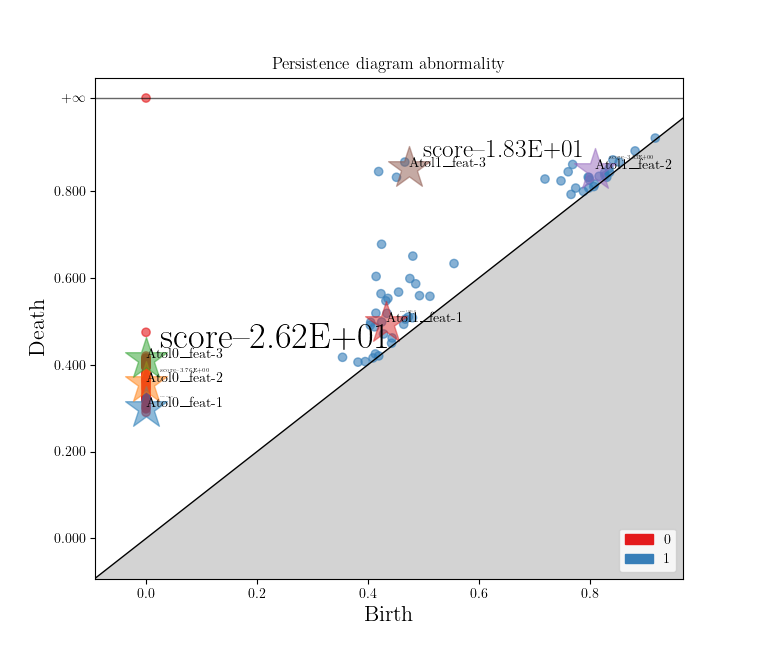}
	\end{subfigure}
	\caption{Left: Overall score (top curve, in blue), scores for $\tilde{s}_{i}$ (middle curves, three top ones for features corresponding to homology dimension 0, three bot ones for features corresponding to homology dimension 1) and ground truth (bottom curve) on a single Topological Wheels dataset. The last homology dimension 1 has the strongest correspondance to the overall score, and matches the underlying truth almost exactly. Right: $\tilde{s}_{i}$ scores of an abnormal persistence diagram on this dataset, next to the associated quantization centers (colored stars) of Algorithm \ref{algo:kmeans_batch}. The scores are written in a font size proportional to them, so that the more abnormal scores appear bigger. In this instance the quantization centers in dimension 0 and 1 with the highest persistence react to this diagram, hinting at a change in the latent data structure as the highest persistence diagram points are usually associated with signal in comparison with points nearer the diagonal.}
	\label{fig:tw-interpretability}
\end{figure}

This leads to valuable interpretation. For instance if an abnormal score of TADA were to be caused by a large deviation in a homology dimension 1 center-related component, it is likely that at that time an abnormal dependence cycle is created for a longer or shorter period of time than for the rest of the time series, therefore that the dependence pattern has globally changed in that period of time. See for instance an illustration on the Topological Wheels problem in Figure \ref{fig:tw-interpretability} where globally changing the dependence pattern between sensors is exactly how the abnormal data was produced.
And in the case that the produced score is abnormal simply by virtue of a shift in several dimension 0 centers-related components, it signifies an anomaly in the connectivity dependence structure that does not affect the higher order homological features, therefore it could be attributed to a default (such as a breakdown) in one of the original sensors for instance.

\section{Conclusion}

It is common knowledge that no anomaly detection method can help with identifying all kinds of anomalies. The framework introduced in this paper is relevant for detecting abnormal topological changes in the dependence structure of data, and turns out to be competitive with respect to other state-of-the-art approaches on various benchmark data sets. Naturally, there are many different sorts of anomalies that the proposed method is not able to detect at all - for instance, as the topological embedding is invariant to graph isomorphism, any anomalies linked to node permutation (change of node labelling) cannot be caught. The same is true for homothetic \textit{jumps}: when signals would simultaneously get identically multiplied, the correlation-based similarity matrix would remain unchanged, leading to unchanged topological embedding. While such invariances can be thought of as hindrances, they can also come as a welcome feature if those anomalies are in fact built-in the considered applied problem - for instance in the case of labeling uncertainties in sensors.

The topological anomaly detection finds anomalies that other methods do not seem to discover. It is generally understood that topological information is a form of global information that is complementary to the information gathered by more traditional approaches, e.g. spectral detectors. While confirming this, the above numerical experiment also suggest that topological information is commonly present in various real or synthetic datasets. Therefore for practical applied purposes it is probably best to use our method in combination with other dedicated methods, for instance one that focuses on "local" data shifts such as the \textsc{SubKNN} method.

Focusing on the case of detection of anomalies in the dependence structure of multivariate time series, it appears that the only parameter that requires a careful tuning in our method is the window size (or temporal resolution) $\Delta$, as for most of existing procedures (see Section \ref{sec:applications}). Designing methods to empirically tune this window size, or to combine the outputs of our method at different resolutions would be a relevant addition to our work, that is left for future research.

Let us now emphasize the broader flexibility of the framework we introduce. First, it is not tied to detect changes in correlation structures: we may use Algorithm \ref{algo:timeserie_to_pers_diagrams} with other dissimilarity measures between channels, and in fact we may build persistence diagrams from a time series of more general metric spaces - e.g. meshed shapes, images... -   
%\clement{nécessite une relecture de fred pour pas dire de connerie} 
in the more general case (as done in \cite{Chazal21} for graphs).  Second, the vectorization we propose with Algorithm \ref{algo:kmeans_batch} and \ref{algo:vec} does not necessarily take a sequence of persistence diagrams as input: any sequence of measures may be vectorized the same way. It may find applications in monitoring sequences of point processes realizations, as in evolution of distributions of species for instance - see, e.g., \cite{Renner15}. And finally, one may process the output of the vectorization procedure in other ways than building an anomaly score. For instance, using these vectorizations as inputs of any neural network, or change-points detection procedures such as KCP (\cite{Arlot19}) could provide a dedicated method to retrieve change points of a global structure.

\section{Acknowledgements}

This work has been supported by the French government under the "France 2030” program, as part of the SystemX Technological Research Institute within the Confiance.ai project. The authors also thank the ANR TopAI chair in Artificial Intelligence (ANR–19–CHIA–0001) for financial support.

The authors are grateful to Bernard Delyon for valuable comments and suggestions concerning convergence rates in $\beta$-mixing cases.

\section{Proofs}\label{sec:proofs}

Most of the proposed results are adaptation of proofs in the independent case to the dependent one. A peculiar interest lies in concentration results in this framework, we list important ones in the following section.

\subsection{Probabilistic tools for $\beta$-mixing concentration}
In the derivations to follow extensive use will be made of a consequence of Berbee's Lemma. 
%\textcolor{red}{Fred: pas très claire (question con): comment le $\beta$ du lemme ci-dessous se recolle à celui défini dans la section syr les résultats théoriques?}
%\clement{tu as raison c'est pas exactement le même. Pour éviter les contorsions je redéfinis dans le lemme d'en dessous. C'est mieux?}
\begin{lem}{\cite[Proposition 2]{Doukhan95}}\label{lem:betamix_indep}
Let $(X_i)_{i \geqslant 1}$ be a sequence of random variables taking their vales in a Polish space $\mathcal{X}$, and, for $j>0$, denote by 
\[
b_j = \E \left [ \sup_{B \in \sigma(j+1, + \infty)} | \P(B \mid \sigma(- \infty,j)) - \P(B)| \right ]. \] 
Then there exists a sequence $(\tilde{X}_i)_{i \geqslant 1}$ of independent random variables such that, for any $i \geqslant 1$, $\tilde{X}_i$ and $X_i$ have the same distribution and $\P(X_i \neq \tilde{X}_i) \leqslant b_i$.
\end{lem}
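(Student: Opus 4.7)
The approach is the classical one for Berbee's coupling lemma: I would first establish a pairwise version, and then iterate it on a suitably enlarged probability space. For the pairwise step, fix a sub-$\sigma$-field $\mathcal{F}$ and an $\mathcal{X}$-valued random variable $X$, and exploit the fact that Polish spaces admit regular conditional distributions: let $\mu(\omega,\cdot) := \P(X \in \cdot \mid \mathcal{F})(\omega)$, and let $\nu = \P_X$ denote the marginal. Adjoining an auxiliary uniform $U$ on $[0,1]$ to the probability space, independent of $(X,\mathcal{F})$, I would use $U$ to realize, conditionally on $\mathcal{F}$, the maximal coupling between $\mu(\omega,\cdot)$ and $\nu$ (that is, split the measures into $\mu \wedge \nu$, $\mu - \mu \wedge \nu$, $\nu - \mu \wedge \nu$ and sample using $U$ in the standard manner). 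A direct check shows that the resulting $\tilde{X}$ satisfies $\P(\tilde{X} \in A \mid \mathcal{F})(\omega) = \nu(A)$ almost surely, so $\tilde{X} \sim \nu$ and $\tilde{X}$ is independent of $\mathcal{F}$, while integrating the maximal-coupling identity $\P(X \neq \tilde{X} \mid \mathcal{F}) = \|\mu(\omega,\cdot) - \nu\|_{TV}$ yields $\P(X \neq \tilde{X}) = \beta(\mathcal{F}, \sigma(X))$.

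Next, I would iterate. Set $\tilde{X}_1 = X_1$. Having produced $\tilde{X}_1, \ldots, \tilde{X}_{i-1}$ using independent auxiliary uniforms $U_2, \ldots, U_{i-1}$ (all jointly independent of the sequence $(X_n)_n$), define $\mathcal{F}_i := \sigma(X_1, \ldots, X_{i-1}, U_2, \ldots, U_{i-1})$ and apply the pairwise version to $(X_i, \mathcal{F}_i)$ with a fresh uniform $U_i$. This produces $\tilde{X}_i \sim X_i$, independent of $\mathcal{F}_i$, with $\P(X_i \neq \tilde{X}_i) = \beta(\mathcal{F}_i, \sigma(X_i))$. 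Since each $\tilde{X}_j$ for $j \leqslant i-1$ is measurable with respect to $\sigma(X_1, \ldots, X_j, U_2, \ldots, U_j) \subseteq \mathcal{F}_i$, independence of $\tilde{X}_i$ from $\mathcal{F}_i$ delivers independence from $(\tilde{X}_1, \ldots, \tilde{X}_{i-1})$, and iterating over $i$ produces joint independence of the full sequence. Finally, because the auxiliaries are independent of $(X_n)_n$, the conditional law of $X_i$ given $\mathcal{F}_i$ coincides with that given $\sigma(X_1, \ldots, X_{i-1})$, so $\beta(\mathcal{F}_i, \sigma(X_i)) = \beta(\sigma(X_1, \ldots, X_{i-1}), \sigma(X_i)) \leqslant b_i$.

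The main obstacle I expect is the bookkeeping around enlarging the probability space: one has to verify that adjoining independent uniforms to the conditioning $\sigma$-field leaves the $\beta$-coefficient untouched and, more importantly, that conditional-independence statements produced one at a time at each stage combine into joint independence of all $\tilde{X}_i$ simultaneously. Once the finite-sequence construction is established, the passage to an infinite sequence is routine via Kolmogorov's extension theorem, as the finite-dimensional distributions built above are consistent in $n$.
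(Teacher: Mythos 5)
The paper offers no proof of this lemma: it is imported verbatim from \cite[Proposition 2]{Doukhan95} (Berbee's coupling lemma) and used as a black box in the proofs of Theorems \ref{thm:cv_batch} and \ref{thm:cv_minibatch}. Your sketch is the standard proof of that result and is essentially sound: regular conditional distributions exist because $\mathcal{X}$ is Polish; the pairwise step is the classical Berbee/Goldstein construction; the successive-conditioning argument ($\tilde{X}_i$ independent of $\mathcal{F}_i \supseteq \sigma(\tilde{X}_1,\hdots,\tilde{X}_{i-1})$ for every $i$) does yield joint independence by peeling off one factor at a time; and Kolmogorov extension handles the infinite sequence. Two points deserve more care than the sketch gives them. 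First, in the pairwise step one cannot simply ``sample the maximal coupling of $\mu(\omega,\cdot)$ and $\nu$'': the first coordinate of that coupling must be the already-given variable $X_i$ itself, not a fresh copy with the correct conditional law. You must therefore construct $\tilde{X}_i$ as a measurable function of $(X_i,\omega,U_i)$ whose conditional law given $(\mathcal{F}_i,X_i)$ realizes the maximal coupling — keep $\tilde{X}_i=X_i$ with conditional probability $\frac{d(\mu\wedge\nu)}{d\mu}(X_i)$, otherwise draw from the normalized $\nu-\mu\wedge\nu$ — and it is precisely here that the Borel isomorphism of $\mathcal{X}$ onto a subset of $[0,1]$ is used so that a single auxiliary uniform suffices. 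Second, with the displayed definition of $b_j$ (conditioning on $\sigma(-\infty,j)$, events in $\sigma(j+1,+\infty)$), your own chain yields $\P(X_i\neq\tilde{X}_i)\leqslant \beta\bigl(\sigma(X_1,\hdots,X_{i-1}),\sigma(X_i)\bigr)\leqslant b_{i-1}$ rather than $b_i$; this off-by-one is inherited from the transcription of the statement and is harmless for its use later in the paper (only the decay profile of $\beta(q)$ matters there), but you should say explicitly which indexing convention you are matching rather than silently asserting the bound $b_i$.
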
 
The above Lemma allows to translate standard concentration bounds from the i.i.d. framework to the dependent case, where dependency is seized in terms of $\beta$-mixing coefficients.
 
  Let us recall here the general definition of $\beta$-mixing coefficients from Definition \ref{def:beta_mixing}. For a sequence of random variables $(Z_t)_{t \in \Z}$ (not assumed stationary), the \textit{beta-mixing} coefficient of order $q$ is 
\begin{align*}
\beta(q)=\sup_{t \in \Z} \E \left [ \sup_{B \in \sigma(t+q,+ \infty)} | \P(B\mid \sigma(- \infty, t)) - \P(B) | \right ].
\end{align*}
If the sequence $(Z_t)_{t \in \Z}$ is assumed to be stationary, $\beta(q)$ may be written as
\begin{align*}
\beta(q) = \E (\d_{TV}(P_{(X_q, X_{q+1}, \hdots)  \mid \sigma(\hdots, X_0)},P_{(X_q, X_{q+1}, \hdots)})),
\end{align*}
where $\d_{TV}$ denotes the total variation distance. We will make use of the following adaptation of Bernstein's inequality to the dependent case.
\begin{theorem}\label{thm:concentration_Doukhan}{\cite[Theorem 4]{Doukhan94}}
Let $(X_t)_{t \in \Z}$ be a sequence of (real) variables with $\beta$-mixing coefficients $(\beta(q))_{q \in \N^*}$, that satisfies
\begin{enumerate}
\item $\forall t \in \Z \quad \E(X_t) = 0$,
\item $\forall t,n \in \Z \times \N \quad \E \left | \sum_{j=1}^n X_{t+j} \right |^2 \leqslant n \sigma^2$,
\item $\forall t \quad |X_t| \leqslant M$ a.s..
\end{enumerate}
Then, for every $x \geqslant 0$, 
\begin{align*}
\P \left ( \frac{1}{n} \left | \sum_{t=1}^n X_t \right | \geqslant 2 \sigma \sqrt{\frac{x}{n}} + \frac{4Mx}{3n} \right ) \leqslant 4 e^{-x} + 2 \beta \left ( \lceil \frac{n}{17} \rceil - 1 \right ).
\end{align*}
\end{theorem}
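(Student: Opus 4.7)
The stated theorem is the standard Bernstein-type concentration inequality for $\beta$-mixing sequences, and the natural route is a Berbee blocking argument built on top of Lemma \ref{lem:betamix_indep}.

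The plan is to partition $\{1,\ldots,n\}$ into $2k$ successive blocks of common length $q$ (up to a small remainder absorbed into the last block), alternating ``odd'' blocks $B_1, B_3,\ldots$ and ``even'' blocks $B_2, B_4,\ldots$. Setting $U_i = \sum_{t \in B_{2i-1}} X_t$ and $V_i = \sum_{t \in B_{2i}} X_t$ gives $\sum_{t=1}^n X_t = \sum_{i=1}^k U_i + \sum_{i=1}^k V_i$. Two consecutive odd-block sums are separated in time by an even block of length $q$; since each $U_i$ is a measurable function of the vector $(X_t)_{t \in B_{2i-1}}$ valued in the Polish space $\R^q$, Lemma \ref{lem:betamix_indep} produces independent copies $\tilde U_1,\ldots,\tilde U_k$ with $\tilde U_i$ having the same distribution as $U_i$ and $\sum_i \P(\tilde U_i \neq U_i) \leqslant k\,\beta(q)$. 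The same coupling is applied to the even-block sums, yielding independent copies $\tilde V_i$.

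Next, the classical Bernstein inequality for bounded independent variables is applied to $\sum_i \tilde U_i$. Assumption 3 yields $|\tilde U_i| \leqslant qM$ almost surely, and assumption 2 (used on the partial sum of size $q$) gives $\E[\tilde U_i^2] = \E[U_i^2] \leqslant q\sigma^2$, so the total variance is bounded by $kq\sigma^2 \leqslant n\sigma^2/2$. Bernstein then yields, for every $x \geqslant 0$, a tail bound of the form $\P\bigl(|\sum_i \tilde U_i| \geqslant \sigma\sqrt{nx} + (2/3)qMx\bigr) \leqslant 2 e^{-x}$, and symmetrically for $\sum_i \tilde V_i$. Combining the two halves via the triangle inequality, paying $2k\beta(q)$ for the couplings, and normalizing by $n$ delivers a bound of the shape $\P\bigl(|\sum_{t} X_t|/n \geqslant 2\sigma\sqrt{x/n} + (4qMx)/(3n)\bigr) \leqslant 4 e^{-x} + 2k\,\beta(q)$, which has exactly the structure of the statement.

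The main obstacle is extracting the sharp constants of the stated inequality, in particular the disappearance of $q$ in the linear term $4Mx/(3n)$ and the specific numerical factor $17$ inside $\lceil n/17 \rceil - 1$. These arise from a careful optimization balancing the block length $q$ against the Bernstein constants and the number of blocks $k$ (so that $kq$ fills $n/2$ and the coupling cost keeps only a single $\beta(q)$ term), together with a refined version of the Bernstein bound that absorbs $q$ into $M$ using the structure of the block sums. Beyond this constant-chasing, the argument is the textbook combination of Berbee's coupling and Bernstein's inequality, and no additional tool is needed beyond what Lemma \ref{lem:betamix_indep} already encapsulates.
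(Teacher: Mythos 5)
You should first note a structural point: the paper does not prove this statement at all. It is quoted verbatim, with its label and citation, as \cite[Theorem 4]{Doukhan94}, and is used downstream as an imported tool (e.g.\ in the proof of Theorem \ref{thm:cv_minibatch}). So there is no internal proof to compare your argument against; the relevant comparison is with the mechanism behind the cited result.

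Your single-scale blocking (Berbee coupling of odd and even blocks of length $q$, then Bernstein on each half) is the standard way to get \emph{a} Bernstein-type inequality for $\beta$-mixing sequences, but it provably cannot deliver the inequality as stated, and the gap is structural rather than ``constant-chasing''. Two quantities come out wrong simultaneously. First, Bernstein applied to $k=n/(2q)$ independent block sums bounded by $qM$ gives a linear term of order $qMx/n$, whereas the statement has $4Mx/(3n)$ with no $q$; making $q$ small to fix this inflates the mixing cost. Second, coupling $k$ odd blocks (and $k$ even blocks) with pairwise gaps of length $q$ costs $2(k-1)\beta(q)\approx (n/q)\beta(q)$, whereas the statement pays only $2\beta\left(\lceil n/17\rceil-1\right)$, i.e.\ a coupling at the single scale $q\approx n/17$ with no factor $n/q$; making $q$ large to fix this inflates the linear term. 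No choice of $q$ reconciles the two, so your claim that a ``refined version of the Bernstein bound absorbs $q$ into $M$'' is doing all the work without any supporting argument. The tell is hypothesis 2 of the statement: the variance bound is required for \emph{every} window length $n$ and every $t$, not just at one block scale. A single-scale argument never uses this; the actual proofs of such inequalities exploit it through a recursive, multi-scale (dyadic) decomposition of the sum, which controls the Laplace transform with the un-inflated constant $M$ while paying the mixing price only once at the coarsest scale. That recursion is the missing idea, and it is a genuinely different mechanism from the textbook coupling-plus-Bernstein combination you describe.
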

To apply Theorem \ref{thm:concentration_Doukhan}, a bound on the variance term is needed. Such bounds are available in the stationary case under slightly milder assumptions (see, e.g., \cite{Rio93}). For our purpose, a straightforward application of \cite[Theorem 1.2, a)]{Rio93} will be sufficient, exposed below.
\begin{lem}\label{lem:cov_betamelange}
Let  $X_t$ denote a centered and stationary sequence of real variables with $\beta$-mixing coefficients $(\beta(q))_{q \in \N^*}$, such that $|X_t| \leqslant M$ a.s.. 

Then it holds
\begin{align*}
\frac{1}{n} \E \left ( \sum_{j=1}^n X_j \right )^2 \leqslant 4M^2 \int_0^1 \beta^{-1}(u)du,
\end{align*}
where $\beta^{-1}(u) = \sum_{ k \in \N} \1_{\beta(k) > u}$.
\end{lem}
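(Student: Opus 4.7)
The plan is to invoke Rio's Theorem 1.2 a) from \cite{Rio93} essentially verbatim, replacing the quantile-function factors by the uniform bound $M$.

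First, since $\E(X_j) = 0$ for every $j$, we have $\E(S_n^2) = \Var(S_n)$ where $S_n = \sum_{j=1}^n X_j$, so it suffices to control the variance of $S_n$. Second, the sequence $(X_t)_{t \in \Z}$ is stationary, centered, and has $\beta$-mixing coefficients $(\beta(q))_{q \geqslant 1}$, which is exactly the setup of Rio's covariance inequality. That result yields a bound of the form
\begin{align*}
\Var(S_n) \leqslant 4 n \int_0^1 Q_{|X_0|}^2(u)\, \beta^{-1}(u)\, du,
\end{align*}
where $Q_{|X_0|}$ denotes the quantile function of $|X_0|$ and $\beta^{-1}$ is the generalized inverse appearing in the statement. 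The derivation underlying Rio's result goes through the classical stationarity reduction $\E(S_n^2) = n\Var(X_0) + 2\sum_{k=1}^{n-1}(n-k)\Cov(X_0, X_k)$ combined with his coupling-based covariance inequality $|\Cov(X_0, X_k)| \leqslant 2 \int_0^{\beta(k)} Q_{|X_0|}^2(u)\, du$; Fubini then regroups the sum of the integrals into the integral of $\beta^{-1}$.

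Third, one exploits the uniform bound $|X_0| \leqslant M$ almost surely: this implies $Q_{|X_0|}(u) \leqslant M$ for every $u \in (0,1)$, so the integrand $Q_{|X_0|}^2(u)\,\beta^{-1}(u)$ is dominated by $M^2 \beta^{-1}(u)$. Substituting in and dividing by $n$ gives
\begin{align*}
\frac{1}{n} \E\Bigl( \sum_{j=1}^n X_j \Bigr)^2 \leqslant 4 M^2 \int_0^1 \beta^{-1}(u)\, du,
\end{align*}
which is the announced inequality.

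The proof is thus essentially a black-box invocation of Rio's inequality, and there is no real obstacle: the only minor bookkeeping is to check that the boundedness assumption $\|X_0\|_\infty \leqslant M$ indeed dominates the quantile function uniformly, which is immediate. I would not expand Rio's proof in detail, since it is quoted as a self-contained external result; the main value of writing this proof is just to record the two short reductions (centering, and quantile bound by $M$) that link the general statement to the specific form needed here.
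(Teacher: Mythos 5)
Your proposal is correct and matches the paper's treatment exactly: the paper offers no separate proof of this lemma, presenting it as "a straightforward application of \cite[Theorem 1.2, a)]{Rio93}", which is precisely your black-box invocation of Rio's variance bound followed by the observation that $|X_0|\leqslant M$ a.s.\ dominates the quantile function by $M$. Nothing is missing.
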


\subsection{Proofs for Section \ref{sec:theoretical_results}}

\subsubsection{Proof of Theorem \ref{thm:cv_batch}}
\begin{proof}[Proof of Theorem \ref{thm:cv_batch}]

We begin by the proof of Theorem \ref{thm:cv_batch}. It follows the proof of \cite[Theorem 9]{Chazal21} in the i.i.d. case, with adaptations to cope with dependency using Lemma \ref{lem:betamix_indep}. 

To apply Lemma \ref{lem:betamix_indep}, first note that the space $\mathcal{M}(R,M)$, endowed with the Levy-Prokhorov metric, is a Polish space (see, e.g., \cite[Theorem 1.11]{Prokhorov56}). Using Lemma \ref{lem:betamix_indep} as in \cite[Proof of Proposition 2]{Doukhan95} yields the existence of $\tilde{X}_1, \hdots, \tilde{X}_n$ such that, denoting by $Y_k$ (resp. $\tilde{Y}_k$) the vector $(X_{(k-1)q+1}, \hdots, X_{kq})$ (resp. $(\tilde{X}_{(k-1)q+1}, \hdots, \tilde{X}_{kq})$), for $1 \leqslant k \leqslant n/q$, it holds:
\begin{itemize}
\item For every $k \geqslant 1$ $Y_k$ has the same distribution as $\tilde{Y}_k$, and $\P(\tilde{Y}_k \neq Y_k) \leqslant \beta(q)$.
\item The random variables $(Y_{2k})_{k \geqslant 1}$ are independent, as well as the variables $(Y_{2k-1})_{k \geqslant 1}$.
\end{itemize} 

For any $\cb \in \B(0,R)^k$, we denote by $\hat{m}(\cb)$ (resp. $\tilde{m}(\cb)$) the vector of centroids defined by, for all $j=1, \hdots, k$,
\begin{align*}
\hat{m}(\cb)_j = \frac{\bar{X}_n(du) \left [u \1_{W_j(\cb)}(u) \right ]}{\hat{p}_j(\cb)}, \qquad \tilde{m}(\cb)_j = \frac{\bar{\tilde{X}}_n(du) \left [u \1_{W_j(\cb)}(u) \right ]}{\tilde{p}_j(\cb)}, 
\end{align*}
where $\hat{p}_j(\cb)$ (resp. $\tilde{p}_j(\cb)$) denotes $\bar{X}_n(W_j(\cb))$ (resp. $\bar{\tilde{X}}_n(W_j(\cb))$), adopting the convention $\hat{m}_j(\cb), \tilde{m}_j(\cb)=0$ when the corresponding cell weight is null.

The following lemma ensures that $\hat{m}(\cb)$ contracts toward $\cb^*$, provided $\cb \in \B(\cb^*,R_0)$.

\begin{lem}\label{lem:batch_onestep}
With probability larger than $1-16 e^{-c_1 n p_{\min}/(qM)} -  2e^{-x}$, it holds, for every $\cb \in \B(\cb^*,R_0)$, 
\begin{align*}
\|\hat{m}(\cb) - \cb^*\|^2 \leqslant \frac{3}{4} \| \cb-\cb^*\|^2 + C_1 \frac{D_{n/q}^2}{p_{\min}^2} + C_2 \frac{k R^2 M^2}{p_{\min}^2} \left ( \frac{1}{n}\sum_{i=1}^n \1_{X_i \neq \tilde{X}_i} \right )^2, 
\end{align*}
where $D_{n/q} = \frac{RM\sqrt{q}}{\sqrt{n}} \left ( k \sqrt{d \log(k)} + \sqrt{x} \right )$.
\end{lem}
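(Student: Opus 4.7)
The argument adapts the one-step contraction analysis of \cite[Theorem 9]{Chazal21} to the dependent setting via the block coupling $(X_i,\tilde X_i)$ of Lemma \ref{lem:betamix_indep}. Denote by $m^*(\cb)$ the ``population update'' whose $j$-th coordinate is $\E(X)(du)[u\1_{W_j(\cb)}(u)]/\E(X)(W_j(\cb))$. Under the margin condition of Definition \ref{def:margincondition}, \cite[Lemma 21]{Chazal21} yields the deterministic contraction $\|m^*(\cb)-\cb^*\|^2\leqslant \tfrac{1}{2}\|\cb-\cb^*\|^2$ for every $\cb\in\B(\cb^*,R_0)$. All that remains is to control $\|\hat m(\cb)-m^*(\cb)\|$ uniformly in $\cb$, through the decomposition
\begin{align*}
\hat m(\cb) - m^*(\cb) \;=\; \underbrace{(\hat m(\cb)-\tilde m(\cb))}_{\text{coupling error}} \;+\; \underbrace{(\tilde m(\cb)-m^*(\cb))}_{\text{statistical error}}.
\end{align*}

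For the coupling error, the numerators and denominators defining $\hat m_j(\cb)$ and $\tilde m_j(\cb)$ differ only on the indices $i$ with $X_i\neq \tilde X_i$. Using $\supp(X_i)\subset\B(0,R)$, $X_i(\R^2)\leqslant M$, and the lower control $\hat p_j(\cb)\wedge\tilde p_j(\cb)\geqslant p_{\min}/2$ (established below), coordinate-wise manipulation and summation over the $k$ cells yield
\begin{align*}
\|\hat m(\cb)-\tilde m(\cb)\|^2 \;\leqslant\; C\,\frac{kR^2M^2}{p_{\min}^2}\left(\frac{1}{n}\sum_{i=1}^n \1_{X_i\neq\tilde X_i}\right)^{\!\!2},
\end{align*}
which produces the third term in the claimed bound.

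For the statistical error, the essential point is that the $q$-blocks $\tilde Y_k$ split into two i.i.d. subsequences (odd- and even-indexed), each of length roughly $n/(2q)$. Applying the uniform i.i.d.\ empirical-process bound of \cite[Proposition~8]{Chazal21} to each subsample separately, and taking a union bound, one obtains, with probability at least $1-2e^{-x}$,
\begin{align*}
\sup_{\cb\in\B(\cb^*,R_0)}\|\tilde m(\cb)-m^*(\cb)\| \;\leqslant\; \frac{C'\,D_{n/q}}{p_{\min}},
\end{align*}
the $\sqrt{q/n}$ inside $D_{n/q}$ reflecting the effective sample size $n/q$. The lower control on the cell masses used above is obtained by applying Theorem~\ref{thm:concentration_Doukhan} to the bounded stationary sequence $\1_{W_j(\cb)}(X_i)$, with the variance estimate coming from Lemma~\ref{lem:cov_betamelange}; union-bounding over the $k$ cells, the two halves and the Bernstein prefactor produces the failure probability $16 e^{-c_1 np_{\min}/(qM)}$.

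Finally, combining these three pieces via Young's inequality $\|a+b\|^2\leqslant \tfrac{3}{2}\|a\|^2+3\|b\|^2$ together with the margin contraction $\|m^*(\cb)-\cb^*\|^2\leqslant\tfrac12\|\cb-\cb^*\|^2$ produces the prefactor $3/4$ in front of $\|\cb-\cb^*\|^2$ and absorbs the coupling and statistical contributions into $C_1 D_{n/q}^2/p_{\min}^2$ and $C_2 kR^2M^2/p_{\min}^2\cdot(\tfrac1n\sum\1_{X_i\neq\tilde X_i})^2$. The chief technical obstacle is the uniform-in-$\cb$ control of the statistical error: the combinatorial complexity of the family of Voronoi partitions $\{W_j(\cb)\}_{\cb\in\B(\cb^*,R_0)}$ is what generates the $k\sqrt{d\log k}$ factor inside $D_{n/q}$, through a chaining/covering argument that carries over verbatim from the i.i.d.\ analysis of \cite{Chazal21} once the blocks have been decoupled.
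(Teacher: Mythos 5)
Your overall architecture is the right one and matches the paper's: couple $(X_i)$ to $(\tilde X_i)$ via Lemma \ref{lem:betamix_indep}, split the $q$-blocks into odd and even i.i.d.\ subsequences of effective size $n/(2q)$ to import the uniform empirical-process bounds of \cite{Chazal21} (which is where the $k\sqrt{d\log k}$ and the $\sqrt{q/n}$ in $D_{n/q}$ come from), quarantine all the dependence into the coupling term $\tfrac{kR^2M^2}{p_{\min}^2}\bigl(\tfrac1n\sum_i\1_{X_i\neq\tilde X_i}\bigr)^2$, and close with the margin-condition contraction plus a Young-type inequality to get the $3/4$. The fact that you route the contraction through the population update $m^*(\cb)$ and \cite[Lemma 21]{Chazal21}, while the paper bounds $\|\tilde m(\cb)-\cb^*\|^2\leqslant 0.65\|\cb-\cb^*\|^2+CD_{n/q}^2/p_{\min}^2$ wholesale via \cite[Proof of Lemma 17]{Chazal21}, is an immaterial difference of packaging.

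There is, however, one step that does not work as written: the lower bound on the cell masses. You propose to get $\tilde p_j(\cb)\geqslant p_{\min}/2$ by applying the $\beta$-mixing Bernstein inequality (Theorem \ref{thm:concentration_Doukhan}) with the variance bound of Lemma \ref{lem:cov_betamelange} and a union bound over the $k$ cells. This fails on three counts. First, that inequality is pointwise in $\cb$, whereas the lemma must hold simultaneously for every $\cb\in\B(\cb^*,R_0)$ — both the contraction step and your coupling-error bound use $\tilde p_j(\cb)\gtrsim p_{\min}$ at the (random) current iterate, so the mass control needs the same uniformity over the Voronoi class that you invoke for the numerator. Second, Theorem \ref{thm:concentration_Doukhan} leaves a residual $2\beta(\lceil n/17\rceil-1)$ in the failure probability; no such term appears in the claimed bound $1-16e^{-c_1np_{\min}/(qM)}-2e^{-x}$, and indeed the entire point of working with the coupled $\tilde X_i$'s is that the $\beta$-coefficients only resurface later, via Markov's inequality on $\tfrac1n\sum_i\1_{X_i\neq\tilde X_i}$, in the proof of Theorem \ref{thm:cv_batch}. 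Third, a union bound over $k$ cells would put a factor $k$ in front of the exponential, not the absolute constant $16$. The fix is exactly the route you already use for the numerator (and the paper's Lemma \ref{lem:concentration_batch}): apply the uniform i.i.d.\ deviation bound of \cite[Lemma 22]{Chazal21} to the masses of the two decoupled half-samples $\bar{\tilde Y}_{2s}$ and $\bar{\tilde Y}_{2s-1}$, obtaining two-sided control of $\tilde p_j(\cb)$ uniformly in $\cb$ with probability $1-16e^{-x}$, and then set $x=c_1np_{\min}/(qM)$. With that substitution your argument is complete and coincides with the paper's.
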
 
The proof of Lemma \ref{lem:batch_onestep} is postponed to Section \ref{sec:proof_lemma_batch_onestep}. Equipped with Lemma \ref{lem:batch_onestep}, we first prove recursively that, if $\cb^{(0)} \in \B(\cb^*,R_0)$, then w.h.p., for all $t >0$ $\cb^{(t)} \in \B(\cb^*,R_0)$. We let $\Omega_1$  be defined as
\begin{align*}
\Omega_1 & = \left \{ C_2 {k R^2 M^2} \left ( \frac{1}{n}\sum_{i=1}^n \1_{X_i \neq \tilde{X}_i} \right )^2/p_{\min}^2 \leqslant R_0^2/8 \right \}.
\end{align*}
Noting that $\E \left ( \frac{1}{n}\sum_{i=1}^n \1_{X_i \neq \tilde{X}_i} \right )^2 \leqslant \E \left ( \frac{1}{n}\sum_{i=1}^n \1_{X_i \neq \tilde{X}_i} \right ) = \beta(q)$, Markov inequality yields 
\begin{align*}
\P(\Omega_1^c) \leqslant C \frac{k M^2}{\kappa_0^2 p_{\min}^2} \beta(q).
\end{align*}
Choosing $x = c_1 (n/q) \kappa_0^2 p_{\min}^2/M^2$ in Lemma \ref{lem:batch_onestep}, for $c_1$ small enough yields, for $(n/q)$ large enough, 
\begin{align*}
\|\hat{m}(\cb) - \cb^*\|^2 \leqslant \frac{3}{4} R_0^2 + \frac{R_0^2}{8} + \frac{R_0^2}{8} = R_0^2,
\end{align*} 
with probability larger than $1- 18e^{-c_1 n \kappa_0^2 p_{\min}^2/qM^2} - C \frac{k M^2}{\kappa_0^2 p_{\min}^2} \beta(q)$, provided $\cb \in \B(\cb^*,R_0)$. Denoting by $\Omega_2$ the probability event onto which the above equation holds, a straightforward recursion entails that, if $\cb^{(0)} \in \B(\cb^*,R_0)$, then, for all $t \geqslant 1$ $\cb^{(t)} = \hat{m}(\cb^{(t-1)}) \in \B(\cb^*,R_0)$, on $\Omega_2$.

Then, using Lemma \ref{lem:batch_onestep} iteratively yields that, on $\Omega_2 \cap \Omega_x$, where $\P(\Omega_x^c) \leqslant 2e^{-x}$, for all $t \geqslant 1$, provided $\cb^{(0)} \in \B(\cb^*,R_0)$, 
\begin{align}\label{eq:batch:tstep}
\|\cb^{(t)} - \cb^*\|^2 \leqslant \left ( \frac{3}{4} \right )^t\|\cb^{(0)} - \cb^*\|^2 + C_1 \frac{D_{n/q}^2}{p_{\min}^2} + C_2 \frac{k R^2 M^2}{p_{\min}^2} \left ( \frac{1}{n}\sum_{i=1}^n \1_{X_i \neq \tilde{X}_i} \right )^2. 
\end{align}

Theorem \ref{thm:cv_batch} now easily follows. For the first inequality, let $t \geqslant 1$, then, using Markov inequality again gives
\begin{align*}
\P \left ( \frac{1}{n} \sum_{i=1}^n \1_{X_i \neq \tilde{X}_i} \geqslant \sqrt{q/n} \right ) \leqslant \sqrt{\frac{n}{q}} \beta(q).
\end{align*}
Thus, the assumption $\beta^2(q)/q^3 \leqslant n^{-3}$ entails that
\begin{align*}
\|\cb^{(t)} - \cb^*\|^2 \leqslant \left ( \frac{3}{4} \right )^t R_0^2 + C_1 \frac{D^2_{n/q}}{p_{\min}^2} + C_2 \frac{kR^2M^2}{p_{\min}^2(n/q)},
\end{align*}
with probability larger than $1- 18e^{-c_1 n \kappa_0^2 p_{\min}^2/qM^2} - C \frac{k M^2}{\kappa_0^2 p_{\min}^2} \beta(q) - q/n-2e^{-x}$ that is larger than $1 - C \frac{q k M^2}{n\kappa_0^2 p_{\min}^2} - 2e^{-x}$.

For the second inequality, denote by $Z_t$ the random variable
\begin{align*}
& Z_t =  \\
& \quad \left ( \|\cb^{(t)} - \cb^*\|^2 - \left ( \frac{3}{4} \right )^t\|\cb^{(0)} - \cb^*\|^2 - C_1 \frac{q R^2M^2 k^2d\log(k)}{np_{\min}^2} - C_2 \frac{k R^2 M^2}{p_{\min}^2} \left ( \frac{1}{n}\sum_{i=1}^n \1_{X_i \neq \tilde{X}_i} \right )^2\right )_+ \1_{\Omega_2},
\end{align*}
and remark that \eqref{eq:batch:tstep} entails
\begin{align*}
\P \left (Z_t \geqslant C \frac{R^2M^2q}{n} x \right ) \leqslant \P(\Omega_x^c) \leqslant 2e^{-x}.
\end{align*}
We deduce that
\begin{align*}
\E(Z_t) \leqslant  C \frac{qR^2M^2}{n},
\end{align*}
which leads to
\begin{align*}
\E(\|\cb^{(t)} - \cb^*\|^2) & \leqslant \E ( \|\cb^{(t)} - \cb^*\|^2 \1_{\Omega_2}) + 4k R^2 M \P(\Omega_2^c) \\
& \leqslant \E(Z_t) +  \left ( \frac{3}{4} \right )^t\|\cb^{(0)} - \cb^*\|^2 + C_1 \frac{q R^2M^2 k^2d\log(k)}{np_{\min}^2} \\
& \quad + C_2 \frac{kR^2M^2}{p_{\min}^2} \E \left ( \left ( \frac{1}{n}\sum_{i=1}^n \1_{X_i \neq \tilde{X}_i} \right )^2\right ) + 4k R^2 M \P(\Omega_2^c).
\end{align*}
Noting that 
\begin{align*}
\E \left ( \left ( \frac{1}{n}\sum_{i=1}^n \1_{X_i \neq \tilde{X}_i} \right )^2\right ) \leqslant \E \left (  \frac{1}{n}\sum_{i=1}^n \1_{X_i \neq \tilde{X}_i} \right ) = \beta(q),
\end{align*}
and using 
\begin{align*}
\P(\Omega_2^c) & \leqslant \left ( e^{-c_1 n \kappa_0^2 p_{\min}^2/qM^2} + \frac{k M^2}{\kappa_0^2 p_{\min}^2} \beta(q)  \right ) \\
& \leqslant C\frac{kM^2}{\kappa_0^2 p_{\min}^2} (\beta(q) + (q/n)) \\
& \leqslant C\frac{q kM^2}{n\kappa_0^2 p_{\min}^2}
\end{align*}
whenever $\beta(q) \leqslant q/n$ leads to the result.
\end{proof}

\subsubsection{Proof of Theorem \ref{thm:cv_minibatch}}

\begin{proof}[Proof of Theorem \ref{thm:cv_minibatch}]
This proof follows the steps of \cite[Proof of Lemma 18]{Chazal21}.

As in the proof of Lemma \ref{lem:batch_onestep}, let $\tilde{X}_1, \hdots, \tilde{X}_n$ be such that, denoting by $Y_k$ (resp. $\tilde{Y}_k$) the vector $(X_{(k-1)q+1}, \hdots, X_{kq})$ (resp. $(\tilde{X}_{(k-1)q+1}, \hdots, \tilde{X}_{kq})$), for $1 \leqslant k \leqslant n/q$, it holds:
\begin{itemize}
\item For every $k \geqslant 1$ $Y_k$ has the same distribution as $\tilde{Y}_k$, and $\P(\tilde{Y}_k \neq Y_k) \leqslant \beta(q)$.
\item The random variables $(Y_{2k})_{k \geqslant 1}$ are independent, as well as the variables $(Y_{2k-1})_{k \geqslant 1}$.
\end{itemize}
Let $A_{\indep}$ denote the event
\begin{align*}
A_\indep = \left \{ \forall j =1, \hdots, n/q \quad Y_j = \tilde{Y}_j  \right \}.
\end{align*}
A standard union bound yields that $\P(A_\indep^c) \leqslant \frac{n}{q} \beta(q)$. On $A_\indep$, the minibatches used by Algorithm \ref{algo:kmeans_minibatch} may be considered as independent, so that the main lines of  \cite[Proof of Lemma 18]{Chazal21} readily applies, replacing $X_i$'s by $\tilde{X}_i$'s. In what follows we let $\tilde{\cb}^{(t)}$ denote the output of the $t$-th iteration of Algorithm \ref{algo:kmeans_minibatch} based on $\tilde{X}_1, \hdots, \tilde{X}_n$.

Assume that $n \geqslant k$, and $q \geqslant C \frac{M^2}{p_{\min}^2} \log(n)$, for a large enough constant $C$ that only depends on $\int_0^1 \beta^{-1}(u)du$, to be fixed later.   
For $t \leqslant n/(4q)=T$, let $A_{t,1}$ and $A_{t,3}$ denote the events
\begin{align*}
A_{t,1} & = \left \{ \forall j = 1, \hdots, k \quad |\hat{p}_j(t) - p_j(t)| \leqslant \frac{p_{\min}}{128} \right \}, \\
A_{t,3} & = \left \{ \forall j = 1, \hdots, k \quad \left \| \int (\tilde{c}_j^{(t)}-u) \1_{W_j(\tilde{\cb}^{(t)})}(u)(\bar{\tilde{X}}_{B_t^{(3)}} - \E(X))(du) \right \| \leqslant 8 R \sqrt{\frac{M k d p_{\min}}{C}} \right \},
\end{align*} 
where $\hat{p}_j(t) = \bar{\tilde{X}}_{B_t^{(1)}}(W_j(\tilde{\cb}^{(t)}))$. Then, according to Theorem \ref{thm:concentration_Doukhan} with $x = 2 \log(n)$ and Lemma \ref{lem:cov_betamelange} to bound the corresponding $\sigma$, for $j \in \{1,3\}$, $\P(A_{t,j}) \leqslant 4 d k/n^2 + 2 k d \beta(q_n/18)$, for $n$ large enough. 

Further, define
\begin{align*}
A_{\leqslant t} = \bigcap_{j \leqslant t} A_{j,1} \cap A_{j,3}.
\end{align*}
Then, provided that $q \geqslant c_0 \frac{k^2d M^2}{p_{\min}^2 \kappa_0^2}\log(n)$, where $c_0$ only depends on $\int_0^1 \beta^{-1}(u) du$, we may prove recusively that
\begin{align*}
\forall p \leqslant t \quad \tilde{\cb}^{(p)} \in \B(\cb^*,R_0)
\end{align*} 
on $A_{\leqslant t}$ whenever $\tilde{\cb}^{(0)} = \cb^{(0)} \in \B(\cb^*,R_0)$ (first step of the proof of \cite[Lemma 18]{Chazal21}).

Next, denoting by $a_t = \| \tilde{\cb}^{(t)} - \cb^*\|^2\1_{A_{\leqslant t}}$, we may write
\begin{align*}
\E(a_{t+1}) \leqslant \E(\| \tilde{\cb}^{(t+1)} - \cb^*\|^2 \1_{A_{t+1,1}} \1_{A_{\leqslant t}}) + R_1,
\end{align*}
with 
\begin{align*}
R_1 & \leqslant 4kR^2 \left ( \P(A_{t+1,3}^c) \right ) \\
& \leqslant 16 k^2 d  R^2 \left ( n^{-2} + \beta(q/18) \right ) \\
& \leqslant 32 k^2 d R^2 (q/n)^2,
\end{align*} 
recalling that $\beta(q/18)/q^2 \leqslant n^{-2}$. 
Proceeding as in \cite[Proof of Lemma 18]{Chazal21}, we may further bound
\begin{align*}
\E(\| \tilde{\cb}^{(t+1)} - \cb^*\|^2 \1_{A_{t+1,1}} \1_{A_{\leqslant t}}) \leqslant \left ( 1 - \frac{2-K_1}{t+1} \right ) \E(a_t) + \frac{12k d M R^2}{p_{\min}(t+1)^2},
\end{align*}
for some $K_1 \leqslant 0.5$. Noticing that $k \leqslant M/p_{\min}$ and $t+1 \leqslant T = n/(4q)$ yields that
\begin{align*}
\E(a_{t+1}) \leqslant \left ( 1 - \frac{2-K_1}{t+1} \right ) \E(a_t) + \frac{14kdMR^2}{p_{\min}(t+1)^2}.
\end{align*}
Following \cite[Proof of Theorem 10]{Chazal21}, a standard recursion entails
\begin{align*}
\E(a_t) \leqslant \frac{28kdMR^2}{p_{\min}t},
\end{align*}
for $t \leqslant n/(4q)$. At last, since $ \| \tilde{\cb}^{(T)} - \cb^*\|^2 \1_{A_\indep} = \| \cb^{(T)} - \cb^*\|^2 \1_{A_\indep}$, we conclude that
\begin{align*}
\E \| \cb^{(T)} - \cb^*\|^2 & \leqslant  \E(\| \tilde{\cb}^{(T)} - \cb^* \|^2) + 4 k R^2 \P(A_\indep^c) \\
& \leqslant \E(\| \tilde{\cb}^{(T)} - \cb^* \|^2 \1_{A_{\leqslant T}}) + 4 k R^2 \P(A_{\leqslant T}^c) + \frac{4kR^2}{(n/q)} \\
& \leqslant \E(a_T) + \frac{16 k^2 R^2 d}{(n/q)} \\
& \leqslant 128 \frac{ k M  R^2 d}{p_{\min}(n/q)},
\end{align*}
where $k \leqslant M/p_{\min}$ and $T = \frac{(n/q)}{4}$ have been used.
\end{proof}
\subsubsection{Proof of Lemma \ref{lem:batch_onestep}}\label{sec:proof_lemma_batch_onestep}
\begin{proof}[Proof of Lemma \ref{lem:batch_onestep}]
Assume that $\cb \in \B(\cb^*,R_0)$, for some optimal $\cb^* \in \mathcal{C}_{opt}$. Then, for any $K>0$, it holds
\begin{align}\label{eq:onestep_connection_unmixing}
\|\hat{m}(\cb) - \cb^*\|^2 & \leqslant (1+K) \|\tilde{m}(\cb) - \cb^*\|^2 + (1+K^{-1}) \|\hat{m}(\cb) - \tilde{m}(\cb)\|^2.
\end{align}

The first term of the right hand side may be controlled using a slight adaptation of \cite[Lemma 22]{Chazal21}.
\begin{lem}\label{lem:concentration_batch}
With probability larger than $1 - 16e^{-x}$, for all $\cb \in \B(0,R)^k$ and $j \in [\![1,k]\!]$, it holds
\begin{align*}
\tilde{p}_j(\cb) &\geqslant p_j(\cb) - \sqrt{p_j(\cb)} \sqrt{\frac{8 M c_0 q \log(k) \log(2 n N_{\max})}{n} + \frac{8 M q x}{n}}, \\
\tilde{p}_j(\cb) & \leqslant p_j(\cb)  + \frac{8 M c_0 q \log(k) \log(2 n N_{\max})}{n} + \frac{8 M q x}{n} \\
& \qquad  + \sqrt{\frac{8 M c_0 q \log(k) \log(2 n N_{\max})}{n} + \frac{8 M q x}{n}} \sqrt{p_j(\cb)},
\end{align*} 
where $c_0$ is an absolute constant. Moreover, with probability larger than $1 - 2e^{-x}$, it holds
\begin{multline*}
\sup_{\cb \in \B(0,R)^k} \left \| \left ( \int (c_j - u)\1_{W_j(\cb)}(u) (\bar{\tilde{X}}_n - \E(X))(du)  \right )_{j=1, \hdots, k} \right \| \\ \leqslant C_0 \frac{RM \sqrt{q}}{\sqrt{n}} \left ( k \sqrt{d \log(k)} + \sqrt{x} \right ),
\end{multline*}
where $C_0$ is an absolute constant.
\end{lem}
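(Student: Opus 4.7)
The plan is to reduce the proof to the i.i.d.\ case handled in \cite[Lemma 22]{Chazal21} via the block coupling already introduced in the proofs of Theorems \ref{thm:cv_batch} and \ref{thm:cv_minibatch}. Recall that this coupling produces an auxiliary sequence $\tilde{X}_1,\ldots,\tilde{X}_n$ organised into $n/q$ blocks $B_k$ of size $q$, with the even-indexed block vectors mutually independent and similarly for the odd-indexed block vectors. First I would introduce, for each block, the super-measure $Z_k := \sum_{i\in B_k}\tilde{X}_i$. Since each $\tilde{X}_i$ lies almost surely in $\mathcal{M}_{N_{\max}}(R,M)$, every $Z_k$ is a non-negative discrete measure supported in $\B(0,R)$ with total mass at most $qM$ and at most $qN_{\max}$ atoms, and one has the rewriting $\bar{\tilde{X}}_n = \frac{1}{n/q}\cdot \frac{1}{q}\sum_k Z_k$.

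Next I would split the sum over $k$ into its even and odd parts, each of which is an i.i.d.\ sample of size $N := n/(2q)$ of super-measures. The central quantitative observation is that for any measurable set $A$, the scaled variable $Z_k(A)/q$ takes values in $[0,M]$ with mean $\E(X)(A)$, and its variance is bounded by $M\E(X)(A)$ via the elementary inequality $\Var(Z)\leq qM\cdot \E(Z)$ valid for any non-negative $Z\leq qM$. Consequently, within each parity class, the Bernstein-type bound and the VC-type uniform control underlying \cite[Lemma 22]{Chazal21} apply verbatim to the normalised sums $\frac{1}{N}\sum_k Z_{2k}(W_j(\cb))/q$ and its odd analogue: the envelope and variance structure are exactly those of a single observation in $\mathcal{M}(R,M)$, only the sample size has shrunk from $n$ to $N=n/(2q)$. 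Invoking the i.i.d.\ lemma in each parity class and union-bounding over parities produces the first two inequalities, with the announced $8Mq/n$ prefactor traced back to $M/N = 2qM/n$. The logarithmic factor $\log(2nN_{\max})$ reflects the cardinality of the union of atoms of the $\tilde{X}_i$'s, which bounds the complexity of the Voronoi cell system $\{W_j(\cb)\}$ when evaluated on these atoms.

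For the second inequality I would rerun the same reduction, replacing the indicator test function by the vector-valued integrand $u\mapsto (c_j-u)\1_{W_j(\cb)}(u)$, whose Euclidean norm is at most $2R$ on $\B(0,R)$. Each centred super-measure contribution then has norm at most $2qRM$, so a Hoeffding-type tail bound on the two independent parity subsamples of size $N=n/(2q)$ yields the announced rate $\sqrt{q/n}$. The $k\sqrt{d\log(k)}$ factor is inherited from the covering/chaining argument in the i.i.d.\ proof for the class of vector-valued integrands indexed by $\cb\in\B(0,R)^k$, and the $C_0$ absorbs both parities.

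The main obstacle is conceptual rather than computational: one must be sure that the within-block dependence of the $\tilde{X}_i$'s is harmless. This is handled by treating each block as a single indivisible random element $Z_k$, so that only the deterministic bound $Z_k(A)\leq qM$ enters the concentration analysis, bypassing any need for mixing-type estimates at the intra-block scale. Once this reduction is accepted, the remaining work is bookkeeping of constants: the factor $16$ in the first probability reflects the parity union bound on top of the factor $8$ appearing in the i.i.d.\ statement (itself combining two-sided Bernstein and a union over the upper/lower bounds), and the factor $2$ in the second probability corresponds to the same parity union bound applied to the single Hoeffding event.
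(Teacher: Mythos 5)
Your proposal is correct and follows essentially the same route as the paper: group the coupled variables $\tilde{X}_i$ into blocks of size $q$, split the block averages into the two independent parity subsamples of size $n/(2q)$, observe that each block average is itself a measure in $\mathcal{M}(R,M)$ with at most $qN_{\max}$ atoms, and apply \cite[Lemma 22]{Chazal21} verbatim to each parity class before union-bounding. The only cosmetic difference is that you work with the block sums $Z_k$ normalised by $q$ where the paper directly introduces the block averages $\bar{\tilde{Y}}_{2s-r}$, which is the same object.
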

\begin{proof}[Proof of Lemma \ref{lem:concentration_batch}]
We intend here to recover the standard i.i.d.  bounds given in \cite[Lemma 22]{Chazal21}. To this aim, we let $\tilde{p}_{j,0}(\cb)$ and $\tilde{p}_{j,1}(\cb)$ be defined by
\begin{align*}
\tilde{p}_{j,r}(\cb) = \frac{2q}{n} \sum_{s=1}^{n/2q} {\bar{\tilde{Y}}}_{2s-r}(W_j(\cb)),
\end{align*}
for $r \in \{0,1\}$, where ${\bar{\tilde{Y}}}_{2s-r} = \frac{1}{q} \sum_{t=(2s -r - 1)q+1}^{(2s-r)q} \tilde{X}_t$ is a measure in $\mathcal{M}(R,M)$, with total number of support points bounded by $q N_{\max}$, and remark that
\begin{align*}
\tilde{p}_j(\cb) = \frac{1}{2} \left ( \tilde{p}_{j,0}(\cb) + \tilde{p}_{j,1}(\cb) \right ). 
\end{align*}
Since $\E(\tilde{\bar{Y}}_{2s-r})(W_j(\cb)) = p_j(\cb)$, and the $\tilde{p}_{j,r}(\cb)$'s are sums of $ n/2q$ independent measures evaluated on $W_j(\cb)$, we may readily apply \cite[Lemma 22]{Chazal21} replacing $n$ by $n/(2q)$ to each of them, leading to the deviation bounds on the $\tilde{p}_j(\cb)$'s.

For the third inequality of Lemma \ref{lem:concentration_batch}, denoting by
\begin{align*}
\bar{\tilde{X}}_{n,j} = \frac{2q}{n} \sum_{s=1}^{n/2q} {\bar{\tilde{Y}}}_{2s-j}, 
\end{align*}
for $j \in \{0,1\}$, it holds, for any $\cb \in \B(0,R)^k$,
\begin{align*}
& \left \| \left ( \int (c_j - u)\1_{W_j(\cb)}(u) (\bar{\tilde{X}}_n - \E(X))(du)  \right )_{j=1, \hdots, k} \right \|  \\
& \qquad \leqslant \frac{1}{2} \left ( \left \| \left ( \int (c_j - u)\1_{W_j(\cb)}(u) (\bar{\tilde{X}}_{n,0} - \E(X))(du)  \right )_{j=1, \hdots, k} \right \| \right . \\
& \qquad \qquad  \left .  + \left \| \left ( \int (c_j - u)\1_{W_j(\cb)}(u) (\bar{\tilde{X}}_{n,1} - \E(X))(du)  \right )_{j=1, \hdots, k} \right \|\right ).
\end{align*}
Since each of the $\bar{\tilde{X}}_{n,j}$'s are i.i.d. sums of discrete measures (the ${\bar{\tilde{Y}}}_{2s-j}$'s), \cite[Lemma 22]{Chazal21} readily applies (with sample size $n/(2q)$), giving the result.
\end{proof}
We now proceed with the first term in \eqref{eq:onestep_connection_unmixing} as in \cite[Proof of Lemma 17]{Chazal21}. Using the first two inequalities of Lemma \ref{lem:concentration_batch} with $x=c_1 n p_{\min}/M$ yields a probability event $\Omega_1$ onto which
\begin{align*}
\tilde{p}_j(\cb) & \geqslant \frac{63}{64} p_j(\cb) - \frac{p_{\min}}{64} \geqslant \frac{31}{32} p_{\min}, \\
\tilde{p}_j(\cb) & \leqslant \frac{33}{32} p_j(\cb^*).
\end{align*}
Combining this with the inequality of Lemma \ref{lem:concentration_batch} yields, for $n$ large enough and all $\cb \in \B(\cb^*,R_0)$, with probability larger than $1-16e^{-c_1 n p_{\min}/n}-2e^{-x}$, 
\begin{align}\label{eq:contration_batch_iid}
\|\tilde{m}(\cb) - \cb^*\|^2 \leqslant 0.65 \|\cb - \cb^*\|^2 + \frac{C}{p_{\min}^2} D_{n/q}^2.
\end{align}
The precise derivation of \eqref{eq:contration_batch_iid} may be found in \cite[Proof of Lemma 17, pp.34-35]{Chazal21}. Plugging \eqref{eq:contration_batch_iid} into \eqref{eq:onestep_connection_unmixing} leads to, for a small enough $K$,
\begin{align*}
\|\hat{m}(\cb) - \cb^*\|^2 \leqslant \frac{3}{4} \|\cb - \cb^* \|^2 + \frac{C}{p_{\min}^2} D_{n/q}^2 + C_2 \|\hat{m}(\cb) - \tilde{m}(\cb)\|^2,
\end{align*}
with probability larger than $1-16e^{-c_1 n p_{\min}/n}-2e^{-x}$.

It remains to control the last term $\|\hat{m}(\cb) - \tilde{m}(\cb)\|^2$. To do so, note that, for every $j = 1, \hdots, j$, 
\begin{align}\label{eq:unmixing_weights}
\left | \hat{p}_j(\cb) - \tilde{p}_j(\cb) \right | & = \frac{1}{n} \left | \sum_{i=1}^n  X_i(W_j(\cb)) - \tilde{X}_i(W_j(\cb)) \right | \notag \\
& \leqslant \frac{M}{n} \sum_{i=1}^n \1_{X_i \neq \tilde{X}_i},
\end{align}
and 
\begin{align*}
\left \| (\bar{X}_n(du) - \bar{\tilde{X}}_n(du)) \left [ u \1_{W_j(\cb)}(u) \right ] \right \| & \leqslant \frac{2RM}{n} \sum_{i=1}^n \1_{X_i \neq \tilde{X}_i}.
\end{align*}
On $\Omega_1$, it holds, for every $j = 1, \hdots, k$,
\begin{align*}
\|\hat{m}_j(\cb) - \tilde{m}_j(\cb)\| & = \left \| \frac{\bar{X}_n(du) \left [u\1_{W_j(\cb)}(u) \right ]}{\hat{p}_j(\cb)} - \frac{\bar{\tilde{X}}_n(du) \left [u\1_{W_j(\cb)}(u) \right ]}{\tilde{p}_j(\cb)} \right \| \\
& \leqslant \left \| \bar{X}_n(du) \left [ u \1_{W_j(\cb)}(u) \right ] \right \| \left | \frac{1}{\hat{p}_j(\cb)} - \frac{1}{\tilde{p}_j(\cb)} \right | \\
& \qquad + \frac{1}{\tilde{p}_j(\cb)} \left \| (\bar{X}_n(du) - \bar{\tilde{X}}_n(du)) \left [ u \1_{W_j(\cb)}(u) \right ] \right \| \\
&  \leqslant R \frac{|\hat{p}_j(\cb) - \tilde{p}_j(\cb)|}{\tilde{p}_j(\cb)} + \frac{2RM}{n \tilde{p}_j(\cb)} \sum_{i=1}^n \1_{X_i \neq \tilde{X}_i} \\
& \leqslant C \frac{RM}{np_{\min}} \sum_{i=1}^n \1_{X_i \neq \tilde{X}_i}.
\end{align*}
Squaring and taking the sum with respect to $j$ gives the result.
\end{proof}

\subsubsection{Proof of Proposition \ref{prop:test_beta_mixing}}
\begin{proof}[Proof of Proposition \ref{prop:test_beta_mixing}]
Let $Z_1, \hdots, Z_n$ denote the sequence $\|\tilde{v}_1\|, \hdots, \|\tilde{v}_n\|$, that is a stationary $\beta$-mixing sequence of real-valued random variables. For $s \in \R$, we let 
\begin{align*}
F_n(t) = \frac{1}{n} \sum_{i=1}^n \1_{Z_i >t},
\end{align*}
and $F(t) = \P(Z >t)$, and $\hat{t}$ be such that $F_n(\hat{t}) \leqslant \alpha- \delta$. In the i.i.d. case, we might bound $\sup_t (F(t) - F_n(t))/\sqrt{F_n(t)}$ using a standard inequality such as in \cite[Section 5.1.2]{Boucheron05}.  

As for the proofs of Theorem \ref{thm:cv_batch} and \ref{thm:cv_minibatch}, we compare with the i.i.d. case by introducing auxiliary variables.

We let $\tilde{Z}_1, \hdots, \tilde{Z}_n$ be such that, denoting by $Y_k$ (resp. $\tilde{Y}_k$) the vector $(Z_{(k-1)q+1}, \hdots, Z_{kq})$ (resp.   $(\tilde{Z}_{(k-1)q+1}, \hdots, \tilde{Z}_{kq})$), it holds
\begin{itemize}
\item For every $1 \leqslant k \leqslant n/q$ $Y_k \sim \tilde{Y}_k$, and $\P(Y_k \neq \tilde{Y}_k) \leqslant \beta(q)$.
\item $(Y_{2k})_{k \geqslant 1}$ are independent, as well as $(Y_{2k-1})_{ k \geqslant 1}$. 
\end{itemize}
Let $\tilde{F}_n(t)$ denote $\sum_{i=1}^n \1_{\tilde{Z}_i >t}$. Then, for any $t \in \R$, we have
\begin{align*}
\tilde{F}_n(t) \leqslant F_n(t) + \frac{1}{n} \sum_{i=1}^n \1_{Z_i \neq \tilde{Z}_i}.
\end{align*}
If $\Omega_1$ is the event $\left \{ \frac{1}{n} \sum_{i=1}^n \1_{Z_i \neq \tilde{Z}_i} \leqslant  \sqrt{\frac{\alpha q}{n}} \right \}$, that has probability larger than $1- \beta(q) \sqrt{n/(\alpha q)}$ (using Markov inequality as before), then on $\Omega_1$ it readily holds
\begin{align*}
\tilde{F}_n(\hat{t}) & \leqslant F_n(\hat{t}) + \sqrt{\frac{\alpha q}{n}} \\
& \leqslant \alpha - \delta + \sqrt{\frac{\alpha q}{n}},
\end{align*} 
so that we may write, on the same event,
\begin{align}\label{eq:test_betamix_1}
F(\hat{t}) & = F(\hat{t})-\tilde{F}_n(\hat{t}) + \tilde{F}_n(\hat{t}) \notag \\
& \leqslant \alpha - \delta + \sqrt{\frac{\alpha q}{n}} + (F(\hat{t})-\tilde{F}_n(\hat{t})).
\end{align}
It remains to control the stochastic term $(F(\hat{t})-\tilde{F}_n(\hat{t}))$. To do so, we denote by
\begin{align*}
\tilde{X}_{j,0}(t) & = \frac{1}{q}\sum_{i=2(j-1)q +1}^{(2j-1)q} \1_{\tilde{Z}_i > t}, \\
\tilde{X}_{j,1}(t) & = \frac{1}{q}\sum_{i=(2j-1)q +1}^{2jq} \1_{\tilde{Z}_i > t}, 
\end{align*}
for $j \in [\![1,n/(2q)]\!]$ and $t \in \R$. Note that, for any $\sigma \in \{0,1\}$, $\tilde{X}_{j,\sigma}$'s are i.i.d., take values in $[0,1]$, and have expectation $F(t)$. Next, we define, for $1 \leqslant j \leqslant n/2q$ and $t \in \R$,
\begin{align*}
\tilde{F}_{n,0}(t) & = \frac{2q}{n} \sum_{j=1}^{n/2q} \tilde{X}_{j,0}(t) \\
\tilde{F}_{n,1}(t) & = \frac{2q}{n} \sum_{j=1}^{n/2q} \tilde{X}_{j,1}(t),
\end{align*}
and we note that $\tilde{F}_n(t) = \frac{1}{2}(\tilde{F}_{n,0}(t) + \tilde{F}_{n,1}(t))$. Since the $\tilde{F}_{n,\sigma}$'s are sums of i.i.d. random variables, the following concentration bound follows.
\begin{lem}\label{lem:concentration_detection}
For $j \in \{0,1\}$, and $x$ such that $(n/2q)x^2 \geqslant 1$, it holds 
\begin{align*}
\P \left ( \sup_{t \in \R} \frac{(F(t) - \tilde{F}_{n,j}(t))}{\sqrt{F(t)}} \geqslant 2x \right ) \leqslant 2ne^{-(n/2q)x^2}.
\end{align*}
\end{lem}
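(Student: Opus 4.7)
The plan is to prove this uniform concentration bound by combining a pointwise Bernstein inequality for the i.i.d. block averages with a discretization argument exploiting the monotonicity of the class $\{z \mapsto \1_{z>t} : t \in \R\}$.

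First, I would fix $\sigma \in \{0,1\}$ and observe that for each $t \in \R$, the variables $\tilde{X}_{1,\sigma}(t), \ldots, \tilde{X}_{N,\sigma}(t)$ with $N = n/(2q)$ are i.i.d., take values in $[0,1]$, have mean $F(t)$, and satisfy $\Var(\tilde{X}_{k,\sigma}(t)) \leqslant \E \tilde{X}_{k,\sigma}(t)^2 \leqslant F(t)$ since $\tilde{X}_{k,\sigma}(t) \leqslant 1$. Bernstein's inequality then gives, for any $u > 0$,
\[
\P\bigl( F(t) - \tilde{F}_{n,\sigma}(t) \geqslant u \bigr) \leqslant \exp\!\left(-\frac{N u^2}{2F(t) + 2u/3}\right).
\]
Setting $u = 2x\sqrt{F(t)}$, one immediately notes that the event is vacuous whenever $F(t) \leqslant 4x^2$ (otherwise $\tilde{F}_{n,\sigma}(t) \geqslant 0$ would contradict $F(t) - \tilde{F}_{n,\sigma}(t) > u$). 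On the complementary range $\{F(t) > 4x^2\}$ one has $u < F(t)$, so $2F(t) + 2u/3 \leqslant 3F(t)$, and the Bernstein bound simplifies to roughly $\exp(-Nx^2)$ up to an absolute constant.

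Second, to convert the pointwise bound into a uniform-in-$t$ bound, I would exploit monotonicity. The map $t \mapsto \tilde{F}_{n,\sigma}(t)$ is non-increasing and piecewise constant with at most $n/2$ jumps (located at the $\tilde{Z}_i$ for $i$ in the relevant half of the sample), and $F$ is non-increasing. I would introduce a deterministic grid $t_1 > t_2 > \cdots > t_n$ selected so that $F(t_k) = k/n$ (using a right-continuous inverse to handle atoms). For any $t$ in the interval $[t_{k+1}, t_k)$, both $F(t) - F(t_k)$ and the gap $\tilde F_{n,\sigma}(t_{k+1}) - \tilde F_{n,\sigma}(t_k)$ can be controlled, and the monotonicity of indicators allows to upper-bound $F(t) - \tilde{F}_{n,\sigma}(t)$ on this interval in terms of $F(t_k) - \tilde{F}_{n,\sigma}(t_{k+1})$ up to a $1/n$ discretization error. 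Applying the pointwise Bernstein bound at each grid point $t_k$ with the appropriate $u_k = 2x\sqrt{F(t_k)}$, and taking a union bound over the $n$ grid points, then delivers the $2n$ prefactor in the target statement.

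Third, I would verify that the discretization slack is absorbed by the hypothesis $(n/2q)x^2 \geqslant 1$: this condition forces $x \geqslant \sqrt{2q/n}$, which in turn makes the $1/n$ corrections at the grid boundaries lower-order compared with the main $2x\sqrt{F(t)}$ threshold. Combining this with the geometric distinction between low-$F$ values (where the bound is automatic) and high-$F$ values (where Bernstein is effective) yields the claimed inequality.

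The main obstacle, as I see it, lies in the bookkeeping around the constant $2$ in $2x\sqrt{F(t)}$: a naive peeling or discretization will typically worsen this multiplicative constant. Recovering exactly $2x$ (as stated) requires either a careful choice of grid spacing calibrated to the $F$-level, or a geometric peeling over $F(t) \in (2^{-k-1}, 2^{-k}]$ combined with level-dependent Bernstein thresholds $u_k$. The assumption $(n/2q)x^2 \geqslant 1$ should be precisely what ensures that the compounded losses from Bernstein's denominator term $u/3$, from discretization, and from the union bound all remain within a factor of $2$.
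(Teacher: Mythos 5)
Your route is genuinely different from the paper's. The paper proves this lemma by reusing the normalized (relative-deviation) empirical-process bound of \cite[Lemma 22]{Chazal21} essentially verbatim, i.e.\ a symmetrization argument in the style of the classical relative VC inequalities (cf.\ \cite[Section 5.1.2]{Boucheron05}); the only new ingredient is the capacity bound, namely that the class of block functionals $y=(z_1,\dots,z_q)\mapsto \frac1q\sum_{i}\1_{z_i>t}$, evaluated on the $2\cdot n/(2q)$ sample blocks, takes at most $n$ distinct values, since each value is determined by the vector $(\1_{z_1>t},\dots,\1_{z_n>t})$, which changes at most $n$ times as $t$ varies. The prefactor $2n$ and the exponent then fall out of the symmetrized bound. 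Your alternative --- pointwise Bernstein at the i.i.d.\ block level combined with a deterministic quantile grid $F(t_k)=k/n$, monotonicity, and a union bound --- is more elementary and avoids symmetrization entirely, at the price of the discretization bookkeeping. Your pointwise ingredients are all correct (in particular $\Var(\tilde X_{k,\sigma}(t))\leqslant F(t)$ holds regardless of the dependence inside a block, which is the only thing needed).

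The one step you do not carry out is precisely the one you flag as the main obstacle: you assert that the hypothesis $(n/2q)x^2\geqslant 1$ ``should'' absorb the losses, without verifying it, so as submitted the proof is incomplete. For the record, the accounting does close, with room to spare. For $t\in[t_{k+1},t_k)$, monotonicity gives $F(t)-\tilde F_{n,\sigma}(t)\leqslant F(t_k)-\tilde F_{n,\sigma}(t_k)+1/n$ and $F(t)\geqslant F(t_k)$, so the supremum event forces $F(t_k)-\tilde F_{n,\sigma}(t_k)\geqslant 2x\sqrt{F(t_k)}-1/n=:u_k$ at some grid point, while the region $F(t)\leqslant 1/n$ is handled deterministically as you note. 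The hypothesis gives $1/n\leqslant x^2/2$. Since $\tilde F_{n,\sigma}\geqslant 0$, the event at $t_k$ is void unless $u_k\leqslant F(t_k)$; solving the quadratic in $\sqrt{F(t_k)}$ and discarding the lower root (which would force $F(t_k)\leqslant 1/(2qn)<1/n$, below the smallest grid level) leaves $\sqrt{F(t_k)}\geqslant x(1+1/\sqrt2)$. On that range $u_k\geqslant 2x\sqrt{F(t_k)}\bigl(1-\tfrac{1}{4(1+1/\sqrt2)}\bigr)$ and $u_k\leqslant F(t_k)$, so Bernstein's exponent $Nu_k^2/(2F(t_k)+2u_k/3)$ with $N=n/(2q)$ is at least $\tfrac{3}{2}(0.85)^2\,Nx^2>Nx^2$. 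A union bound over the $n$ grid points then yields $ne^{-Nx^2}\leqslant 2ne^{-Nx^2}$. So your plan is sound and even slightly improves the prefactor, but the decisive quantitative verification is missing from the write-up.
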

A proof of Lemma \ref{lem:concentration_detection} is postponed to the following Section \ref{sec:proof_lem_concentration_detection}. Now, choosing $x=2\sqrt{\frac{q \log(n)}{n}}$ entails that, with probability larger than $1-4(q/n) - \beta(q) \sqrt{n/(\alpha q)}$, 
\begin{align*}
F(\hat{t}) \leqslant \alpha - \delta + \sqrt{\frac{\alpha q}{n}} + 4\sqrt{\frac{q \log(n)}{n}}\sqrt{F(\hat{t})},
\end{align*} 
which leads to
\begin{align*}
\sqrt{F(\hat{t})} \leqslant 2\sqrt{\frac{q \log(n)}{n}} + \sqrt{\alpha - \delta + \sqrt{\frac{\alpha q}{n}} + 4 \frac{q \log(n)}{n}}.
\end{align*}
Choosing $\delta \geqslant 4 \sqrt{\alpha} \sqrt{\frac{q \log(n)}{n}} + \sqrt{\frac{\alpha q}{n}}$ ensures that the right-hand side is smaller than $\sqrt{\alpha}$.
\end{proof}

\subsubsection{Proof of Lemma \ref{lem:concentration_detection}}\label{sec:proof_lem_concentration_detection}
\begin{proof}[Proof of Lemma \ref{lem:concentration_detection}]
The proof follows the one of \cite[Lemma 22]{Chazal21} verbatim, at the exception of the capacity bound, that we discuss now. To lighten notation we assume that we have a $n/(2q)$ sample of $Y_i$'s, with $Y_i=(Z_{(i-1)q + 1}, \hdots, Z_{iq}) \in \R^q$, and we consider the set of functionals
\begin{align*}
\mathcal{F} = \left \{ y=(z_1, \hdots, z_q) \mapsto \frac{1}{q}\sum_{i=1}^q \1_{z_i > t} \right  \}.
\end{align*}  
Following \cite[Lemma 22]{Chazal21}, if $S_\mathcal{F}(y_1, \hdots, y_{n/(2q)})$ denotes the cardinality of $\left \{ f(y_1), \hdots, f(y_{n/(2q)}) \mid f \in \mathcal{F} \right \}$, we have to bound
\begin{align*}
S_\mathcal{F}(Y_1, \hdots, Y_{n/(2q)},Y'_1, \hdots, Y'_{n/(2q)} ),
\end{align*} 
where the $Y'_{i}$'s are i.i.d. copies of the $Y_i$'s. Since, for every $y_1, \hdots, y_{n/q}$, recalling that $y_i=(z_{(i-1)q + 1}, \hdots, z_{iq})$, it holds
\begin{align*}
S_\mathcal{F}(y_1, \hdots, y_{n/q}) \leqslant \left \{ (\1_{z_1 > t}, \hdots, \1_{z_n >t}) \mid t \in \R \right \},
\end{align*} 
we deduce that
\begin{align*}
S_\mathcal{F}(Y_1, \hdots, Y_{n/(2q)},Y'_1, \hdots, Y'_{n/(2q)} ) \leqslant n.
\end{align*}
The remaining of the proof follows verbatim \cite[Lemma 22]{Chazal21}.
\end{proof}

\bibliography{biblio}

\end{document}